\g@addto@macro\UrlSpecials{\do\/{\closerslashes}}\makeatother
     \def\closerslashes{\futurelet\Urlssnext\finishcloserslashes}
     \def\finishcloserslashes{%
       \mathchar8239  
       \nobreak\ifx\Urlssnext/\mskip-3mu \fi}
\g@addto@macro\UrlSpecials{\do\:{\nobreak\mathchar"603A\nobreak}}\makeatother
\newcommand{\AllowBreakBeforeURL}[1]{\hfil\penalty500\hfilneg\ \ \  \url{#1}}
\def\enumerate{%
  \ifnum \@enumdepth >\thr@@\@toodeep\else
    \advance\@enumdepth\@ne
    \edef\@enumctr{enum\romannumeral\the\@enumdepth}%
      \expandafter
      \list
        \csname label\@enumctr\endcsname
        {\usecounter\@enumctr\def\makelabel##1{\hss\llap{##1}}%
	\setlength{\topsep}{2pt}
	 \ifnum \@enumdepth > 0 \setlength{\topsep}{-2pt} \fi 
	 \ifnum \@itemdepth > 0 \setlength{\topsep}{-2pt} \fi 
	\setlength{\partopsep}{0pt}
      	\advance\leftmargin12pt
	\@beginparpenalty=9999
          \itemsep=0pt}%
  \fi}
\def\itemize{%
  \ifnum \@itemdepth >\thr@@\@toodeep\else
    \advance\@itemdepth\@ne
    \edef\@itemitem{labelitem\romannumeral\the\@itemdepth}%
    \expandafter
    \list
      \csname\@itemitem\endcsname
      {\def\makelabel##1{\hss\llap{##1}%
	}%
	\setlength{\topsep}{2pt}
	 \ifnum \@itemdepth > 1 \setlength{\topsep}{-3pt} \fi 
	\setlength{\partopsep}{0pt}
      	\advance\leftmargin12pt
	\@beginparpenalty=9999
        \itemsep=0pt}%
  \fi}
\renewenvironment{abstract}
{
\hrule height 0.25pt
\vskip 5pt
\noindent \textbf{Abstract}
\vskip 5pt
}
{
\vskip 5pt
\noindent \textit{\small Keywords:~\@keywords}

\vskip 3pt
\noindent \textit{\small Math.\ Subj.\ Class.: \@msc}
\vskip 5pt
\hrule height 0.25pt
}
\newcommand{\refereeheaders}{%
\renewcommand{\@evenhead}{\hfil \small \emph{Notes to aid the referee} \hfil\textit{\thepage}}
}
\newcommand\refnote[2][0]{\marginpar{\vskip-7pt\vskip#1pt\color{blue}[\raise0.5pt\hbox{\smaller note}\,\cref{#2}]}}
\newtheoremstyle{plain}{\medskipamount}{\medskipamount}{\normalfont\it}{0pt}{\normalfont\bf}{.}{0.5em plus 0.1em}{}
\theoremstyle{plain}
\renewenvironment{proof}[1][\proofname]{\par
  \pushQED{\qed}%
  \normalfont \vskip-\lastskip\vskip \medskipamount\relax
       \noindent {\bfseries#1\@addpunct{.}\ \ \ignorespaces}%
}{%
  \popQED\endtrivlist\@endpefalse \goodbreak \vskip\medskipamount\relax
}
\renewcommand{\titledata}[2]{%
\global\def\p@pertitle{#1}%
\vskip 5mm
\begin{center}\LARGE\bfseries
\mathversion{bold}
#1\if!#2!\else
\global\advance\@footcount by 1%
\symbolfootnote[\@footcount]{#2}\fi\end{center}%
\vskip 1mm
}
\renewcommand{\section}{%
\@startsection{section}{1}{0pt}{-\baselineskip}{0.5\baselineskip}{\large\bfseries\mathversion{bold}}%
}
\newcommand{\pref}[1]{\textup(\ref{#1}\textup)}
\newcommand{\csee}[1]{\textup(see \cref{#1}\textup)}
\newcommand{\fullccf}[2]{\textup(cf.\ \fullcref{#1}{#2}\textup)}
\newcommand{\fullcref}[2]{\cref{#1}\pref{#1-#2}}
\newcommand{\ZZ}{\mathbb{Z}}
\renewcommand{\natural}{\mathbb{N}}
\newcommand{\iso}{\cong}
\newcommand{\normal}{\triangleleft}
\newcommand{\sm}{\smallsetminus}
\newcommand{\quot}[1]{\overline{#1}}
\newcommand{\ul}[1]{\underline{#1}}
\newcommand{\voltage}{\mathop{\Pi}}
\newcommand{\connsum}{\mathbin{\#}}
\renewcommand{\pmod}[1]{\ (\mathop{\rm mod}\ #1)}
\newcommand{\bya}[1]{&\stackrel{\textstyle a}{\longrightarrow}&#1}
\newcommand{\byai}[1]{&\stackrel{\textstyle a^{-1}}{\longrightarrow}&#1}
\newcommand{\byb}[1]{&\stackrel{\textstyle b}{\longrightarrow}&#1}
\newcommand{\byc}[1]{&\stackrel{\textstyle c}{\longrightarrow}&#1}
\DeclareMathOperator{\Cay}{Cay}
\newtheorem{CASE}{Case}
\newtheorem{thm}[CASE]{Theorem}
\newtheorem{lem}[CASE]{Lemma}
\newtheorem{cor}[CASE]{Corollary}
\newtheorem{ResearchProblem}[CASE]{Research Problem}
\newtheorem{FGL}[CASE]{Lemma}
\crefname{CASE}{case}{cases}
\Crefname{CASE}{Case}{Cases}
\crefname{subcase}{subcase}{subcases}
\Crefname{subcase}{Subcase}{Subcases}
\crefname{thm}{theorem}{theorems}
\crefname{prop}{proposition}{propositions}
\crefname{lem}{lemma}{lemmas}
\Crefname{lem}{Lemma}{Lemmas}
\crefname{cor}{corollary}{corollaries}
\crefname{mainthm}{theorem}{theorems}
\crefname{FGL}{lemma}{lemmas}
\crefname{MarusicMethod}{method}{methods}
\crefname{MarusicMethodCor}{method}{methods}
\theoremstyle{definition}
\newtheorem{assump}[CASE]{Assumption}
\newtheorem{assumps}[CASE]{Assumptions}
\newtheorem{defn}[CASE]{Definition}
\newtheorem{rem}[CASE]{Remark}
\newtheorem{note}[CASE]{Note}
\newtheorem{notation}[CASE]{Notation}
\newtheorem*{outline}{Outline}
\theoremstyle{remark}
\crefname{rem}{remark}{remarks}
\crefname{assump}{assumption}{assumptions}
\crefname{notation}{notation}{notations}
\crefname{figure}{figure}{figures}
\numberwithin{CASE}{section}
\numberwithin{equation}{CASE}
\newcounter{subcase}
 \newenvironment{subcase}[1][\unskip]{\refstepcounter{subcase}\bf
 \unskip\medskip \noindent \hskip\parindent Subcase \thesubcase\ #1. \it}{\unskip\upshape} 
\numberwithin{subcase}{CASE}
\renewcommand{\thesubcase}{\theCASE.\arabic{subcase}}
\crefname{subcase}{subcase}{subcases}
\Crefname{subcase}{Subcase}{Subcases}
\newcounter{subsubcase}
 \newenvironment{subsubcase}[1][\unskip]{\refstepcounter{subsubcase}\bf
 \unskip\medskip \noindent \hskip2\parindent Subsubcase \thesubsubcase\ #1. \it}{\unskip\upshape}
\numberwithin{subsubcase}{subcase}
\renewcommand{\thesubsubcase}{\thesubcase.\arabic{subsubcase}}
\crefname{subsubcase}{subsubcase}{subsubcases}
\Crefname{subsubcase}{Subsubcase}{Subsubcases}
\theoremstyle{definition}
\newtheorem{subsubassumps}[subsubcase]{\hskip2\parindent Assumptions}
\newenvironment{case}[1][\unskip]{\refstepcounter{case}%
 \em
 \medskip \noindent Case \thecase\ #1.\ }{\unskip\upshape}
 \newcommand{\thecase}{\arabic{case}}
 \newcounter{case}
\crefname{case}{case}{cases}
\newcommand{\MR}[1]{\href{http://www.ams.org/mathscinet-getitem?mr=#1}{\ MR\,#1}}
\theoremstyle{definition}
\newtheorem{aid}{}
\numberwithin{aid}{section}
\numberwithin{equation}{aid}
\newcommand{\oldendaid}{}
\let\oldendaid=\endaid
\renewcommand{\endaid}{\oldendaid\bigskip\vfill\hrule width\textwidth \bigskip\vfill\filbreak}
\numberwithin{equation}{CASE}
\newcounter{ProofCase}
 \newenvironment{ProofCase}[1][\unskip]{\refstepcounter{ProofCase}\it
 \unskip\medskip \indent \hskip\parindent Case \theProofCase\ #1. \it}{\unskip\upshape} 
 \renewcommand{\theProofCase}{\arabic{ProofCase}}
\crefname{ProofCase}{case}{cases}
\Crefname{ProofCase}{Case}{Cases}
\newcounter{ProofSubcase}
 \newenvironment{ProofSubcase}[1][\unskip]{\refstepcounter{ProofSubcase}\it
 \unskip\medskip \indent \hskip\parindent Subcase \theProofSubcase\ #1. \it}{\unskip\upshape} 
\numberwithin{ProofSubcase}{ProofCase}
\crefname{ProofSubcase}{subcase}{subcases}
\Crefname{ProofSubcase}{Subcase}{Subcases}
\renewcommand{\l@section}{\@dottedtocline{1}{3.5em}{1em}}
\renewcommand{\l@subsection}{\@dottedtocline{2}{6em}{1.5em}}
\renewenvironment{frontmatter}
{\thispagestyle{amctitle}%
\setcounter{page}{\@startpage}%
\vskip 10pt%
\centering}
{\vskip 20pt%
\blfootnote{\raggedright \ifnum\@authorcount=1\textit{E-mail address:}\else\textit{E-mail addresses:}\fi ~\@emails}%
}
\renewcommand{\ps@amctitle}{%
  \renewcommand\@oddhead{}
  \let\@evenhead\@oddhead
  \renewcommand\@evenfoot{\hfil}
  \let\@oddfoot\@evenfoot
}
\def\@oddrunninghead{Cayley graphs with commutator subgroup of order~$2p$ are hamiltonian}
\def\@evenrunninghead{Dave Witte Morris}
\begin{document}

\begin{frontmatter}

\titledata{Cayley graphs on groups with commutator 
\\ subgroup of order~$2p$ are hamiltonian}{}
{}

\authordata{Dave Witte Morris}
{Department of Mathematics and Computer Science,
University of Lethbridge, \\
Lethbridge, Alberta, T1K~3M4, Canada}
{Dave.Morris@uleth.ca, http://people.uleth.ca/$\sim$dave.morris/}
{}

\keywords{Cayley graph, hamiltonian cycle, commutator subgroup}
\msc{05C25, 05C45}

\begin{abstract}
We show that if $G$ is a finite group whose commutator subgroup $[G,G]$ has order~$2p$, 
where $p$ is an odd prime, then every connected Cayley graph on~$G$ has a hamiltonian cycle.
\end{abstract}

\end{frontmatter}

\section{Introduction}

Let $G$ be a finite group.
It is easy to show that if $G$ is abelian (and $|G| > 2$), then every connected Cayley graph on~$G$ has a hamiltonian cycle. 
(See \cref{CayleyDefn} for the definition of the term \emph{Cayley graph}.)
To generalize this observation, one can try to prove the same conclusion for groups that are close to being abelian. Since a group is abelian precisely when its commutator subgroup is trivial, it is therefore natural to try to find a hamiltonian cycle when the commutator subgroup of~$G$ is close to being trivial. The following theorem, which was proved in a series of papers, is a well-known result along these lines.

\begin{thm}[Maru\v si\v c \cite{Marusic-HamCircCay}, Durnberger \cite{Durnberger-semiprod,Durnberger-prime}, 
1983--1985] \label{G'=p}
If the commutator subgroup $[G,G]$ of~$G$ has prime order, then every connected Cayley graph on~$G$ has a hamiltonian cycle.
\end{thm}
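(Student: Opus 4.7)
Let $N = [G,G]$, which has prime order~$p$, is cyclic, and is normal in~$G$. Set $\bar G = G/N$, which is abelian. Given a connected Cayley graph $\Cay(G;S)$, let $\bar S$ be the image of $S$ in $\bar G$; then $\bar S$ generates~$\bar G$. The overall plan is to find a hamiltonian cycle in $\Cay(\bar G;\bar S)$ and lift it to a hamiltonian cycle in $\Cay(G;S)$ via the Factor Group Lemma, which requires that the \emph{voltage} of the cycle (the product of its edge labels in~$G$, which necessarily lies in~$N$) be a generator of~$N$. Since $|N|=p$, any nonidentity voltage suffices.

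The first step is to invoke the classical fact that any connected Cayley graph on an abelian group of order at least~$3$ is hamiltonian, which gives a starting hamiltonian cycle $C$ in $\Cay(\bar G;\bar S)$. If the voltage of $C$ is nontrivial, the proof is immediate by the Factor Group Lemma. So I would next try to produce, by small local modifications of~$C$, alternative hamiltonian cycles whose voltages exhaust a coset of a proper subgroup of~$N$; since $N$ has prime order, any variation at all forces some hamiltonian cycle to have nontrivial voltage. The standard modifications are of Maru\v si\v c type: replacing a subpath $x\to xs \to xst$ by $x\to xt \to xts$ (when $\bar s\bar t=\bar t\bar s$ in~$\bar G$, which always holds here) changes the voltage by the commutator $[s,t]$. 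So if there exist $s,t\in S$ with $[s,t]\neq 1$, and if the cycle~$C$ uses an $s$-edge immediately followed by a $t$-edge, one of the two cycles has nontrivial voltage.

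The main obstacle is the pathological case: every hamiltonian cycle $C$ in $\Cay(\bar G;\bar S)$ has trivial voltage, and no admissible swap of the above kind is available (for instance, because every swap involves a commuting pair, or because the hamiltonian cycles in $\bar G$ are forced to be of a very rigid form). Here one must choose the hamiltonian cycle in~$\bar G$ more carefully, working directly with the structure of~$\bar G$. I expect to split into cases according to the rank of $\bar G$ and the size and nature of the generating set~$\bar S$. When $\bar G$ is cyclic, Maru\v si\v c's original argument handles things by a tour structure on cosets of a cyclic subgroup that contains $\langle s\rangle$ for some $s\in S$ and then uses a commutator edge to switch between cosets. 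When $\bar G$ has higher rank, a hamiltonian cycle can typically be constructed as a sequence of ``rows'' traversing cosets of a cyclic subgroup, joined by a single ``column'' edge; varying the row at which the column edge appears changes the voltage by a conjugate of a commutator, and at least one choice is forced to be nontrivial unless strong commutativity holds throughout, which forces $N=1$, a contradiction.

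The remaining obstacle is therefore a careful case analysis for very small configurations: when $|\bar S|=2$, or when $|\bar G|$ is small, or when $\bar S$ consists of involutions, the double-edge technique can fail and one must exhibit the hamiltonian cycle by hand. I anticipate that this enumeration, together with a separate treatment of the case where some element of~$S$ actually lies in~$N$ (so one of the ``edges'' is a loop mod~$N$), is the technical heart of the proof and accounts for the length of the three-paper sequence cited. Once these residual configurations are dispatched, the Factor Group Lemma closes the argument uniformly.
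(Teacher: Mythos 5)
The paper does not actually prove this theorem; it cites it from Maru\v si\v c and Durnberger, so the only comparison available is with the strategy of those papers, on which this paper's own machinery (\cref{FGL} and \cref{StandardAlteration}) is modelled. At that level your plan is the right one: pass to $\Cay(\bar G;\bar S)$ with $\bar G=G/N$ abelian, and arrange a hamiltonian cycle whose voltage generates the prime-order group~$N$, so that the Factor Group Lemma lifts it. Two things, however, keep this from being a proof. First, the local modification you describe --- replacing $x\to xs\to xst$ by $x\to xt\to xts$ --- is not a valid operation on hamiltonian cycles: the intermediate vertex changes from $\overline{xs}$ to $\overline{xt}$, so the resulting closed walk omits $\overline{xs}$ and revisits $\overline{xt}$, and is a hamiltonian cycle only in degenerate situations. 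The alteration actually used by Maru\v si\v c and Durnberger (compare \cref{StandardAlteration}) requires the cycle to contain a specific configuration --- an oriented path of the form $(s,t,s^{-1})$ together with a nearby $t$-edge one coset over --- and reroutes within the six-vertex subgraph these span; producing a hamiltonian cycle in $\Cay(\bar G;\bar S)$ that contains such a configuration for a pair with $[s,t]\neq e$ is where essentially all the work lies, and your sketch does not address it.

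Second, that work --- the case analysis over the rank of~$\bar G$, the size of~$\bar S$, whether some element of~$S$ lies in~$N$, whether generators have order~$2$ modulo~$N$, and the cyclic case treated by Maru\v si\v c's coset-tour argument --- is exactly what you defer with ``I expect'' and ``I anticipate.'' Since the theorem consists of nothing but that case analysis (which is why it occupies three papers), the proposal is a correct table of contents for the proof rather than a proof. The high-level architecture matches the literature; the gap is that neither the correct form of the voltage-changing move nor any of the constructions it must be applied to is actually supplied.
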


D.\,Maru\v si\v c (personal communication) suggested more than thirty years ago that it should be possible to replace the prime with a product $pq$ of two distinct primes:

\begin{ResearchProblem}[D.\,Maru\v si\v c, personal communication, 1985] \label{G'=pq}
Show that if the commutator subgroup of~$G$ has order~$pq$, where $p$ and~$q$ are two distinct primes, then every connected Cayley graph on~$G$ has a hamiltonian cycle.
\end{ResearchProblem}

This has recently been accomplished when $G$ is either nilpotent \cite{GhaderpourMorris-Nilpotent} or of odd order \cite{Morris-OddPQCommHam}. As another step toward the solution of this problem, we establish the special case where $q = 2$:

\begin{thm} \label{G'=2p}
If the commutator subgroup of~$G$ has order~$2p$, where $p$ is an odd prime, then every connected Cayley graph on~$G$ has a hamiltonian cycle.
\end{thm}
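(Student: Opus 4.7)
The plan is to apply the Factor Group Lemma with respect to the quotient $G\to G/P$, where $P$ is the Sylow $p$-subgroup of $[G,G]$. Since $[G,G]$ is a group of order $2p$, it is isomorphic to either $\ZZ_{2p}$ or $D_p$, and in either case $P$ is the unique subgroup of order $p$; in particular $P$ is characteristic in $[G,G]$, hence normal in $G$. The quotient $G/P$ has commutator subgroup of order~$2$, so \cref{G'=p}, applied to the prime~$2$, guarantees that every connected Cayley graph on $G/P$ has a hamiltonian cycle. Because $|P|=p$ is prime, every nontrivial element of $P$ generates it, so the Factor Group Lemma will lift a hamiltonian cycle in $\Cay(G/P;\overline{S})$ to one in $\Cay(G;S)$ as soon as the product of the edges of the cycle, read in $G$ rather than in $G/P$, is not the identity.

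The substance of the proof is therefore to show that, for every connected $\Cay(G;S)$ satisfying the hypothesis, a hamiltonian cycle of $\Cay(G/P;\overline{S})$ can be chosen with nontrivial $P$-voltage. First I would reduce to the case that $S$ is an irredundant generating set of bounded size, so the analysis breaks into finitely many combinatorial shapes. Then I would distinguish the two main cases according to whether $[G,G]\iso\ZZ_{2p}$ (so that $P$ is central in $[G,G]$) or $[G,G]\iso D_p$ (so that $P$ is the index-two rotation subgroup of the dihedral commutator). Within each main case I would choose two distinguished generators $a,b\in S$ --- whose roles are flagged in the paper's outline --- and subdivide further according to the orders and commuting pattern of their images in $G/[G,G]$, together with the conjugation action of $\langle a,b\rangle$ on $P$. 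In each resulting subcase I would exhibit an explicit hamiltonian cycle in $\Cay(G/[G,G];\overline{\overline{S}})$, lift it to $\Cay(G/P;\overline S)$ using the order-$2$ commutator theory, and compute its $P$-voltage using the commutator identities available in $[G,G]$.

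The main obstacle will be those subcases in which the natural candidate cycle has trivial $P$-voltage. Here the standard remedy is Maru\v si\v c's method, which produces a companion hamiltonian cycle whose voltage differs from that of the original by a prescribed commutator; since $P$ has prime order, as soon as this adjustment lies in $P$ and is nonzero we are done. The truly hard configurations will be those in which both the natural cycle and every Maru\v si\v c modification give a voltage that is trivial modulo~$P$ --- typically when $\bar a,\bar b$ are both involutions in $G/[G,G]$ and the commutators they generate project trivially to $P$, so that all generic voltage-changing operations are blind to the $P$-component. These exceptional configurations will have to be handled by ad hoc constructions: either by bringing in a third generator from $S$, by refining the choice of path using the internal structure of $[G,G]$, or by assembling a hamiltonian cycle in $G$ directly from hamiltonian paths in double cosets rather than going through the Factor Group Lemma.
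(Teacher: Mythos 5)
Your overall architecture --- pass to an irredundant generating set, fix distinguished generators $a,b$ \csee{abDefn}, lift hamiltonian cycles from the abelian quotient, and apply Maru\v si\v c-style voltage alterations when the natural candidate cycle has trivial voltage --- is indeed the architecture of the actual proof. But as written the proposal has a genuine gap: essentially the entire content of the theorem lies in the case analysis that you defer with phrases like ``I would exhibit an explicit hamiltonian cycle in each resulting subcase'' and ``will have to be handled by ad hoc constructions.'' Producing those cycles, computing their voltages, and closing out the exceptional configurations (which in the real proof require, among other things, Bannai's theorem on generalized Petersen graphs, Durnberger's lemma on a generator that commutes with another, and the known results for $|G| = kp$ with $k$ small) \emph{is} the proof; the outline does not substitute for it.

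There are also two concrete missteps in the plan itself. First, the dichotomy $[G,G]\iso\ZZ_{2p}$ versus $[G,G]\iso D_p$ is spurious: a commutator subgroup of order $2p$ is automatically cyclic. (The subgroup $\ZZ_p$ is characteristic in $G'$, hence normal in $G$, so $G/C_G(\ZZ_p)$ embeds in the abelian group $\Aut(\ZZ_p)$, forcing $G'\subseteq C_G(\ZZ_p)$; a dihedral $G'$ would fail to centralize its own rotation subgroup.) You would spend effort on a vacuous case and, more to the point, you miss the simplification that lets one apply \cref{FGL} in a single step with $N = G'$ cyclic of order~$2p$, controlling the $\ZZ_2$- and $\ZZ_p$-components of one voltage simultaneously, rather than routing everything through $G/P$. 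Second, irredundant generating sets are \emph{not} of bounded size --- $\#S$ can be arbitrarily large --- so the analysis does not break into ``finitely many combinatorial shapes.'' Handling large~$S$ requires a uniform mechanism (in the paper, connected sums of translates of a base cycle along a hamiltonian path in $\quot G/\langle\quot{S_0}\rangle$, as in \cref{UsualConnSum}), and nothing in your plan supplies one.
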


See the bibliography of \cite{M2Slovenian-LowOrder} for references to other results on hamiltonian cycles in Cayley graphs.

\setcounter{tocdepth}{1} 
\tableofcontents

\section{Some known results}

We recall a few results that provide hamiltonian cycles in various Cayley graphs.  

\begin{defn}[cf.\ {\cite[p.~34]{GodsilRoyle}}] \label{CayleyDefn}
For any subset~$S$ of a finite group~$G$, $\Cay(G;S)$ is the graph whose vertex set is~$G$, with an edge joining~$g$ to~$gs$, for each $g \in G$ and $s \in S$. This is called the \emph{Cayley graph} of  the connection set~$S$ on the group~$G$.
\end{defn}

\begin{rem}
Unlike most authors (including \cite{GodsilRoyle}), we do not require the connection set~$S$ to be symmetric in the definition of a Cayley graph; that is, we do not assume $S$ is closed under inverses. This does not change the set of graphs that are considered to be Cayley graphs, because, in our notation, $\Cay(G;S) = \Cay( G; S \cup S^{-1})$, where $S^{-1} = \{\, s^{-1} \mid s \in S \,\}$.
\end{rem}

\begin{thm}[\cite{M2Slovenian-LowOrder,CurranMorrisMorris-16p,GhaderpourMorris-27p,GhaderpourMorris-30p}] \label{|G|small}
Every connected Cayley graph on~$G$ has a hamiltonian cycle if\/ $|G| = k p$ for some prime~$p$ and some $k \in \natural$ with $1 \le k < 32$ and $k \neq 24$.
\end{thm}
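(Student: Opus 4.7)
The plan is to carry out a case analysis on the value of $k$, drawing on Sylow theory and on the Factor Group Lemma. Since the statement is a compendium of results spread across the cited papers, the main role of the proof is to organize which reference covers which range of $k$, and to dispose of the very small cases directly.

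First I would handle the trivial range. When $k = 1$, $G$ has prime order, hence is cyclic and abelian, so every connected Cayley graph on~$G$ has a hamiltonian cycle by the observation in the introduction. For $k = 2$, every group of order $2p$ is either cyclic or dihedral, and both are handled in a few lines via the Factor Group Lemma applied to the cyclic subgroup of order~$p$. Analogous elementary arguments, exploiting the very restricted structure of groups of order $kp$ for small~$k$, dispose of several further small values before any serious machinery is needed.

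For intermediate $k$, the strategy runs as follows. By Sylow's theorem the number $n_p$ of Sylow $p$-subgroups divides~$k$ and satisfies $n_p \equiv 1 \pmod{p}$, so whenever $p > k$ one has $n_p = 1$ and $G$ possesses a normal subgroup~$P$ of order~$p$. Then $G/P$ has order~$k$, which is small; one seeks a hamiltonian cycle in $\Cay(G/P;\, \overline S)$ whose voltage (the product of the generators along the projected sequence) generates~$P$, and the Factor Group Lemma then lifts this cycle to a hamiltonian cycle in $\Cay(G;S)$. In the complementary regime $p \le k$, one has $|G| = kp \le k^2 < 1024$, so $G$ falls into the finite list of groups of bounded order whose Cayley graphs have been verified to be hamiltonian, case by case, in \cite{M2Slovenian-LowOrder}.

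The main obstacle --- and the reason that separate papers \cite{CurranMorrisMorris-16p,GhaderpourMorris-27p,GhaderpourMorris-30p} were required for $k \in \{16, 27, 30\}$, and that $k = 24$ remains excluded --- is that for these specific values of~$k$ the number of isomorphism types of groups of order~$kp$ is large, and each type admits many potential connection sets; the clean application of the Factor Group Lemma sketched above breaks down in a bounded but substantial collection of subcases, and a delicate analysis in the spirit of \cref{G'=2p} must then be carried out, organized around the choice of two generators $a, b \in S$. The full proof therefore reduces to verifying that each allowed value of~$k$ is covered by exactly one of the four cited references.
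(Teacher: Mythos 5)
The paper contains no proof of this statement at all: it is quoted as a known theorem, and the four citations are its entire justification --- \cite{M2Slovenian-LowOrder} covers the bulk of the values $1 \le k < 32$, while \cite{CurranMorrisMorris-16p}, \cite{GhaderpourMorris-27p} and \cite{GhaderpourMorris-30p} supply the values $k = 16$, $27$ and~$30$ that required separate treatment (and $k = 24$ remains open, hence its exclusion). Your closing reduction --- verify that each allowed $k$ is covered by one of the four references --- is therefore exactly the paper's treatment, and your outline of the machinery for $p > k$ (the Sylow count $n_p \mid k$, $n_p \equiv 1 \pmod{p}$ forces a normal subgroup~$P$ of order~$p$, after which one hunts for a hamiltonian cycle in $\Cay(G/P; S)$ with voltage generating~$P$ and applies \cref{FGL}) is a fair description of the dominant strategy in those papers, with the honest caveat that finding such a cycle is precisely where their hundreds of pages of case analysis live.

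There is, however, a genuine gap in your handling of the complementary regime $p \le k$. You claim that $|G| = kp \le k^2 < 1024$ places $G$ in ``the finite list of groups of bounded order whose Cayley graphs have been verified to be hamiltonian, case by case, in \cite{M2Slovenian-LowOrder}.'' No such verification exists, in that paper or anywhere else: hamiltonicity of connected Cayley graphs is not known for all groups of order below $1024$, nor even below~$100$. A concrete witness is order $72 = 24 \cdot 3$: it is below your bound, it admits no factorization $kp$ with $k < 32$, $k \ne 24$ and $p$ prime, and it is precisely because no blanket low-order verification exists that the present paper must construct a hamiltonian cycle by hand in its $|G| = 72$ subsubcase ($m = 3$, $p = 3$, $\gamma \in \ZZ_p$) rather than invoke \cref{|G|small}. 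Likewise $k = 30$, $p = 29$ gives $|G| = 870$, far beyond any exhaustive check; \cite{GhaderpourMorris-30p} treats all primes~$p$ for $k = 30$ uniformly, handling small~$p$ (where $p^2$ may divide $|G|$, so the Sylow $p$-subgroup need not be normal or even of order~$p$, and your Factor Group Lemma setup does not directly apply) by structural case analysis internal to that paper, not by appeal to a verified list. So your bridge between the two regimes fails, and once it is removed the proposal collapses to the bare citation --- which is indeed all the paper offers, but the intermediate argument you interpose is not a correct account of how the cited results are actually obtained.
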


\begin{notation} \ 
	\begin{itemize}
	
	\item The symbol~$G$ always represents a finite group.
	
	\item For $g \in G$ and $s_1,\ldots,s_n \in S \cup S^{-1}$, we use $[g](s_1,\ldots,s_n)$ to denote the walk in $\Cay(G;S)$ that visits (in order), the vertices
	$$ g, \, gs_1, \ gs_1 s_2, \, g s_1 s_2 s_3, \ \ldots, \, gs_1s_2\cdots s_n .$$
We may write $(s_1,\ldots,s_n)$ for $[e](s_1,\ldots,s_n)$. 

	\item We use $(s_1,\ldots,s_n)^k$ to denote the concatenation of~$k$ copies of the sequence $(s_i)_{i=1}^n$. 
	
	\item Appending $\#$ to a sequence deletes the last term; that is, $(s_i)_{i=1}^n\# = (s_i)_{i=1}^{n-1}$. 
	
	\item  If $W = [g](s_1,\ldots,s_n)$ is a walk in $\Cay(G;S)$, and $h \in G$, we use $hW$ to denote the translate $[hg](s_1,\ldots,s_n)$.
	
	\item When $C$ is an oriented cycle, we use $-C$ to denote the same cycle as~$C$, but with the opposite orientation.
	
	\item 
	For $g,h \in G$:
		$$ \text{$[g,h] = g^{-1} h^{-1} g h$,
		\quad
		$g^h = h^{-1} g h$,
		\quad and \quad
		${}^h \!g = h g h^{-1}$ ($= g^{h^{-1}}$)}
		. $$

	\item  We use $G'$ to denote the commutator subgroup $[G,G]$ of~$G$.

	\item For convenience, we let $\quot{G} = G/G'$.

	\item For $g \in G$, we let $\quot g = g G'$ be the image of~$g$ in~$\quot{G}$.

	\item  We use $Z(G)$ to denote the center of~$G$.
	
	\end{itemize}
\end{notation}

\begin{defn}[cf.\ {\cite[\S2.1.3, p.~61]{GrossTucker}}] \label{VoltageDefn}
Suppose
 	\begin{itemize}
	\item $N$ is an abelian, normal subgroup of~$G$,
	and
	\item $C = [N v] (s_i)_{i=1}^n$ is an (oriented) cycle in $\Cay(G/N;  S)$.
	\end{itemize}
The \emph{voltage} of~$C$ is ${}^v \! {\left( \prod_{i=1}^n s_i \right)}$. This is an element of~$N$, and it may be denoted $\voltage C$.
\end{defn}

We have the following straightforward observations:

\begin{lem} \label{BasicVoltage}
Assume the notation of \cref{VoltageDefn}. Then: \refnote{BasicVoltageRef}
	\begin{enumerate}
	\item \label{BasicVoltage-welldef}
	$\voltage C$ is determined by the oriented cycle~$C$: it is independent of the choice of the vertex~$N v$ of~$C$, and of the choice of the representative $v$ of~$N v$.
	\item \label{BasicVoltage-translate}
	$\voltage gC = {}^g \! {\left( \voltage C \right)}$ for all $g \in G$.
	\item \label{BasicVoltage-reverse}
	$\voltage(-C) = (\voltage C)^{-1}$.
	\end{enumerate}
\end{lem}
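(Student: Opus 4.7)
The three parts are essentially bookkeeping about conjugation inside the normal, abelian subgroup $N$, so the plan is to compute each one directly, being careful to use the right hypothesis at each step. The key preliminary observation is that because $C$ closes up as a cycle in $\Cay(G/N;S)$, the product $x := \prod_{i=1}^n s_i$ lies in $v^{-1}Nv = N$ (using normality of~$N$), so $\voltage C = {}^v x$ really does land in~$N$.

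For part~\ref{BasicVoltage-welldef}, I would handle the two kinds of choices separately. First, replacing $v$ by another representative $v' = nv$ of $Nv$ (with $n \in N$) gives ${}^{v'}x = n \cdot {}^v x \cdot n^{-1}$; since both $n$ and ${}^v x$ lie in the abelian group~$N$, these two factors commute and cancel, leaving ${}^v x$. This step is the one place where abelianness of~$N$ is genuinely used. Second, if we shift the basepoint to $Nv s_1\cdots s_k$ and rotate the edge sequence to $(s_{k+1},\ldots,s_n,s_1,\ldots,s_k)$, writing $w = s_1\cdots s_k$ the new product becomes $w^{-1} x w$, and its conjugate by $vw$ simplifies as
\[
{}^{vw}(w^{-1} x w) = v w w^{-1} x w w^{-1} v^{-1} = {}^v x,
\]
which is $\voltage C$. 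Combining the two gives independence of both choices.

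For part~\ref{BasicVoltage-translate}, by normality of $N$ the translated cycle $gC$ has basepoint $gNv = Ngv$, so picking the representative $gv$ we obtain
\[
\voltage gC = {}^{gv} x = g \bigl( {}^v x \bigr) g^{-1} = {}^g(\voltage C) .
\]
For part~\ref{BasicVoltage-reverse}, the reversed cycle $-C$ may be written as $[Nv](s_n^{-1},\ldots,s_1^{-1})$ (its basepoint $Nv s_1\cdots s_n$ equals $Nv$ because $x \in N$), and then
\[
\voltage(-C) = {}^v\bigl( s_n^{-1} \cdots s_1^{-1}\bigr) = {}^v(x^{-1}) = ({}^v x)^{-1} = (\voltage C)^{-1} .
\]

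No step is really an obstacle; the only subtlety is keeping the two hypotheses on~$N$ straight (normality is what lets cyclic rotation and left translation land back in~$N$, while abelianness is what makes the voltage independent of the representative~$v$ chosen inside its coset). Everything else is one-line conjugation algebra.
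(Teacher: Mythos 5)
Your proof is correct and follows essentially the same route as the paper's: direct conjugation computations, using normality of $N$ to see that the edge-product lands in $N$ and abelianness of $N$ to absorb the change of coset representative. The only cosmetic difference is that you split part~(\ref{BasicVoltage-welldef}) into two sub-steps (change of representative, then cyclic rotation of the basepoint) where the paper does both in a single combined calculation.
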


\begin{defn}
A subset $S$ of~$G$ is an \emph{irredundant} generating set of~$G$ if $S$ generates~$G$, but no proper subset of~$S$ generates~$G$.
\end{defn}

\begin{FGL}[``Factor Group Lemma'' {\cite[\S2.2]{WitteGallian-survey}}] \label{FGL}
Suppose
 \begin{itemize}
 \item $N$ is a cyclic, normal subgroup of~$G$,
 \item $(s_i)_{i=1}^m$ is a hamiltonian cycle in $\Cay(G/N;S)$,
 and
 \item the voltage $\voltage (s_i)_{i=1}^m$ generates~$N$.
 \end{itemize}
 Then $(s_1,s_2,\ldots,s_m)^{|N|}$ is a hamiltonian cycle in $\Cay(G;S)$.
 \end{FGL}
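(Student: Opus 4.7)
The plan is to verify directly that the closed walk $W = (s_1, \ldots, s_m)^{|N|}$ has length $|G|$, returns to its starting vertex, and visits $|G|$ distinct vertices, and hence is a hamiltonian cycle in $\Cay(G;S)$. The length is automatic: since $(s_i)_{i=1}^m$ is a hamiltonian cycle in $\Cay(G/N; S)$, we have $m = |G/N|$, so $|W| = m \cdot |N| = |G|$.

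For closure, set $\nu = \voltage(s_i)_{i=1}^m \in N$. A traversal of $(s_1, \ldots, s_m)$ starting from any vertex $g$ ends at $g s_1 s_2 \cdots s_m = g\nu$, so after $k$ full traversals from~$e$ the walk sits at~$\nu^k$. The hypothesis that $\nu$ generates~$N$ then gives $\nu^{|N|} = e$, closing~$W$.

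For injectivity of vertex visits, write $v_j = s_1 s_2 \cdots s_j$ and label the vertex reached after $j$ edges of the $k$-th traversal (with $1 \le k \le |N|$ and $0 \le j \le m-1$) as $\nu^{k-1} v_j$. Fixing~$j$ and varying~$k$, these vertices fill out the coset $N v_j$ exactly once each, since $\nu$ has order~$|N|$. Varying~$j$, the cosets $N v_j$ are pairwise distinct because $v_0 N, v_1 N, \ldots, v_{m-1} N$ are the distinct vertices of the quotient hamiltonian cycle. Together, the $m \cdot |N| = |G|$ labels enumerate each element of $G$ exactly once.

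I do not foresee any serious obstacle: the argument is pure bookkeeping. The cyclicity of~$N$ enters only via $\{\nu^k : 0 \le k < |N|\} = N$, and the normality of~$N$ only via the well-definedness of the voltage (so that $\nu \in N$) and the equality of left and right cosets used above.
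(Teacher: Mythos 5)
Your argument is correct and is the standard proof of the Factor Group Lemma; the paper itself states this result as known (citing the Witte--Gallian survey) and gives no proof, so there is nothing to compare against beyond noting that your bookkeeping -- closure via $\nu^{|N|}=e$, and distinctness via the partition of each coset $Nv_j$ by the $|N|$ distinct powers of the generator $\nu$ -- is exactly the intended argument.
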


\begin{cor}[{}{\cite[Cor.~2.11]{M2Slovenian-LowOrder}}] \label{DoubleEdge}
  Suppose
 \begin{itemize}
 \item $N$ is a normal subgroup of~$G$, such that $|N|$ is prime,
 \item the image of $S$ in $G/N$ is an irredundant generating set of~$G/N$,
 \item there is a hamiltonian cycle in $\Cay(G/N;S)$,
 and
 \item $s \equiv t \pmod{N}$ for some $s,t \in S \cup S^{-1}$ with $s \neq t$.
 \end{itemize}
 Then there is a hamiltonian cycle in $\Cay(G;S)$.
 \end{cor}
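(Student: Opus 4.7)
The plan is to apply the Factor Group Lemma (\cref{FGL}) to a suitable lift of the given hamiltonian cycle in $\Cay(G/N;S)$. Write the cycle as $(\bar s_i)_{i=1}^m$, and for each $i$ choose a lift $s_i \in S \cup S^{-1}$ projecting to $\bar s_i$. The resulting closed walk in $\Cay(G;S)$ has voltage $V := s_1 s_2 \cdots s_m \in N$. If $V \neq e$, then since $|N|$ is prime, $V$ generates $N$, and the Factor Group Lemma at once produces the hamiltonian cycle $(s_1,\ldots,s_m)^{|N|}$ in $\Cay(G;S)$.

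Now suppose $V = e$. I would modify a single lift so that the voltage becomes a nontrivial element of $N$. The hypothesis $s \equiv t \pmod{N}$ with $s \neq t$ says that $s$ and $t$ are two distinct lifts of the same edge label in $\Cay(G/N;S)$ (and correspondingly $s^{-1}$ and $t^{-1}$ lift the reverse label). Choose an index $i_0$ at which the cycle uses the undirected edge $\{\bar s, \bar s^{-1}\}$, and replace $s_{i_0}$ by the alternate lift $s'_{i_0}$ with the same projection (so $s \leftrightarrow t$, or $s^{-1} \leftrightarrow t^{-1}$). A direct computation gives
\[
V' V^{-1} = w_1 \bigl( s'_{i_0} s_{i_0}^{-1} \bigr) w_1^{-1},
\qquad w_1 := s_1 \cdots s_{i_0 - 1}.
\]
The factor $s'_{i_0} s_{i_0}^{-1}$ is one of $ts^{-1}, st^{-1}, t^{-1}s, s^{-1}t$, each of which is a nontrivial element of $N$ since $s \neq t$ while $\bar s = \bar t$. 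Because $N$ is normal, the whole conjugate also lies in $N \setminus \{e\}$, so $V' \in N \setminus \{e\}$; primality of $|N|$ then makes $V'$ a generator of $N$, and the Factor Group Lemma concludes the proof.

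The main (minor) obstacle is showing that an index $i_0$ as above actually exists, i.e.\ that the hamiltonian cycle really does use the edge $\{\bar s, \bar s^{-1}\}$ somewhere. This is exactly where the irredundancy of the image of $S$ in $G/N$ enters: if the cycle avoided that edge, then all its edges would be labeled by elements of $\bar S \setminus \{\bar s\}$, so the walk would remain inside the subgroup $\bigl\langle \bar S \setminus \{\bar s\} \bigr\rangle$, which is proper in $G/N$ by irredundancy---contradicting the fact that the walk visits every vertex. No case analysis on whether $s$ and $t$ individually belong to $S$ or to $S^{-1}$ is needed, as the swap argument works uniformly in all subcases.
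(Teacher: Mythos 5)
Your argument is correct and is essentially the standard one: the paper does not prove this statement but cites it from \cite[Cor.~2.11]{M2Slovenian-LowOrder}, and the proof there proceeds exactly as you do --- lift the hamiltonian cycle, use irredundancy of the image of $S$ to force an edge labelled by the common image of $s$ and $t$ (or its inverse), and if the voltage is trivial swap that one lift so the new voltage becomes a conjugate of $ts^{-1} \neq e$ lying in $N$, hence a generator of the prime-order cyclic group $N$, so \cref{FGL} applies.
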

 
 \begin{lem}[{}{\cite[Lem.~1 on p.~24]{ChenQuimpo}}] \label{ChenQuimpoEvenGrid}
 Let $P_k \mathbin\Box P_\ell$ be the Cartesian product of a path of length~$k$ with a path of length~$\ell$. If $k \ell$~is even, and $k,\ell \ge 2$, then $P_k \mathbin\Box P_\ell$ has a hamiltonian path from any corner vertex~$v$ to any vertex that is at odd distance from~$v$.
 \end{lem}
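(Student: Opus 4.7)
The plan is to induct on the total number of vertices in the grid, with the argument driven by a case analysis on the position of~$w$ relative to~$v$. Without loss of generality, one of $k, \ell$ is even; relabelling if necessary, I will assume it is~$k$. Place $v$ at the corner $(1,1)$ of the grid, and let $w = (a, b)$ be the target vertex, so that $a+b-2$ is odd.

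For the base case I take $k = 2$: a two-row strip. Given any target $w$ at odd distance from $v$, the required Hamiltonian path can be built by an explicit snake pattern that sweeps across the two rows column by column, with an adjustment in the column containing $w$ to guarantee that the walk terminates there. A short subcase analysis based on the row and column of $w$ is enough to produce these snakes.

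For the inductive step ($k \ge 4$), the idea is to cut the grid along a horizontal line into two smaller rectangular sub-grids $A \ni v$ and $B \ni w$, each again of the form $P_{k'} \mathbin\Box P_{\ell'}$. I will pick a pair of adjacent vertices $u \in A$ and $u' \in B$ that straddle the cut; the inductive hypothesis then supplies a Hamiltonian path in~$A$ from $v$ to $u$ and a Hamiltonian path in~$B$ from $u'$ to~$w$ (regarding $u'$ as a corner of~$B$), and concatenating these along the edge $u u'$ gives the desired Hamiltonian path in $P_k \mathbin\Box P_\ell$.

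The hard part will be the bookkeeping needed to keep the inductive hypothesis available in each piece: both $A$ and $B$ must have even vertex count, the chosen vertex $u$ must be at odd distance from $v$ inside~$A$, and $w$ must be at odd distance from $u'$ inside~$B$. Arranging all of these simultaneously forces a split into a handful of subcases indexed by the parity and location of $(a,b)$. When $w$ lies in the row or column containing~$v$ the natural decomposition degenerates, and I will instead handle that situation by a direct snake construction in the spirit of the base case. Managing this case analysis uniformly — in particular, choosing the cut and the crossing edge $u u'$ so that the parity conditions hold on both sides — is where essentially all of the work lies.
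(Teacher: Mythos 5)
The paper does not prove this statement at all: it is imported verbatim from Chen and Quimpo's Lemma~1, so there is no internal proof to measure your argument against. Your strategy --- induct on the order of the grid, separate $v$ from $w$ by a straight cut into two sub-grids, glue a corner-to-$u$ hamiltonian path in the piece containing $v$ to a corner-to-$w$ hamiltonian path in the other piece across an edge $uu'$ of the cut, and fall back on explicit constructions when no admissible cut exists --- is the standard route to results of this kind and can be pushed through. The parity bookkeeping for the cut does close: since $u'$ must be a corner of~$B$, the crossing edge sits in the first or last column, and because the two distance conditions automatically have the same parity (their sum is even whenever $w$ is at odd distance from $v$), choosing that column appropriately makes both of them odd at once.

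As written, however, the proposal is an outline rather than a proof: everything of substance is deferred to ``the bookkeeping,'' and the one concrete claim you make about it is wrong. The decomposition does not degenerate only ``when $w$ lies in the row or column containing~$v$.'' Take $P_6 \mathbin\Box P_3$ with $v=(1,1)$ and $w=(2,3)$ (distance~$3$): a horizontal cut must leave at least two rows on each side and, since the width~$3$ is odd, an even number of rows on each side, so it cannot separate row~$2$ from row~$1$; and no vertical cut leaves two columns on both sides of a $3$-column grid. Thus $(2,3)$ is an exceptional target that lies in neither the first row nor the first column and is not a $2\times\ell$ base case. Relatedly, even for targets that are in the first row, ``a direct snake'' is not enough: reaching $w=(1,2)$ in a large grid requires a comb-shaped path that descends the columns and returns along the top row. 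To complete the argument you must either enumerate and construct all such exceptional configurations explicitly, or add a second gluing device in which $v$ and $w$ lie in the \emph{same} piece and one edge of that piece's hamiltonian path is expanded into a tour of the complementary piece (a hamiltonian cycle of the leftover block through a boundary edge --- in effect the connected-sum operation of \cref{ConnectedSumDef}); the latter disposes of cases like $(2,3)$ above with no further casework.
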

 
 \begin{cor} \label{ChenQuimpoOddEndpt}
 Suppose $N$~is a subgroup of an abelian group~$H$, and $\{x,y\} \cup S_0$ is a subset of~$H$ that generates~$H/N$. Let $k = |\langle x, N \rangle : N|$ and $ |\langle x, y, N \rangle : \langle x, N \rangle|$. If $k\ell$~is even, $k,\ell \ge 2$, $0 \le p < k$, $0 \le q < \ell$, and $p + q$ is odd, then $\Cay( H/N ; \{x,y\} \cup S_0)$ has a hamiltonian path $(s_i)_{i=1}^r$, such that $s_1s_2 \cdots s_r = x^p y^q$.
 \end{cor}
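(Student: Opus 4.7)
\textit{Plan of proof.}
Since $H$ is abelian, the product $s_1 s_2 \cdots s_r$ of edge labels along any walk from~$e$ equals the walk's terminal vertex, so the conclusion is equivalent to the existence of a hamiltonian path in $\Cay(H/N; \{x,y\}\cup S_0)$ from~$e$ to $x^p y^q$. Let $Y = \langle x, y, N\rangle/N$, an abelian subgroup of $H/N$ of order~$k\ell$, and represent each element of $Y$ uniquely as $x^a y^b$ with $0 \le a < k$, $0 \le b < \ell$.

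In the base case $H/N = Y$, the $x$-edges $x^a y^b \sim x^{a+1} y^b$ (for $0 \le a \le k-2$) together with the $y$-edges $x^a y^b \sim x^a y^{b+1}$ (for $0 \le b \le \ell - 2$) embed a $k$-by-$\ell$ grid as a spanning subgraph of $\Cay(Y; \{x,y\})$, under which $x^p y^q$ sits at graph distance $p + q$ from the corner~$e$. The hypotheses ``$k\ell$ even'', ``$k, \ell \ge 2$'', and ``$p + q$ odd'' then align with those of \cref{ChenQuimpoEvenGrid}, which supplies the required hamiltonian path.

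For the general case $H/N \supsetneq Y$, I would use a prism-type construction: since $S_0$ generates the abelian quotient $(H/N)/Y$, the Cayley graph $\Cay((H/N)/Y; S_0)$ admits a hamiltonian cycle by the classical Chen--Quimpo theorem for abelian Cayley graphs. This cycle prescribes an order in which to visit the cosets of~$Y$, and within each coset I would splice in a hamiltonian $\{x,y\}$-grid path (via \cref{ChenQuimpoEvenGrid}) whose entry and exit vertices are adjacent, through the appropriate $S_0$-edges, to the neighbouring cosets in the traversal.

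The principal obstacle is ensuring that the concatenated path terminates exactly at $x^p y^q$. The cumulative voltage of the $S_0$-edges is generally not in $Y$, so the grid paths in individual cosets---and especially in the final coset~$Y$---must be chosen to cancel this discrepancy. The flexibility afforded by \cref{ChenQuimpoEvenGrid}, namely arbitrary ``odd-distance'' endpoints, underlies this bookkeeping, but verifying that the required parity conditions can always be met would require a case analysis driven by $|(H/N)/Y|$ and the structure of the $S_0$-edges modulo~$Y$.
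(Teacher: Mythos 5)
Your base case ($H/N = \langle x,y,N\rangle/N$) is exactly the paper's first step: embed the $k\times\ell$ grid in $\Cay(\langle x,y\rangle;x,y)$ and invoke \cref{ChenQuimpoEvenGrid}. But your general case is not a proof: you yourself flag that the endpoint bookkeeping is an unresolved ``principal obstacle,'' and it is a real one. In your prism decomposition, the path must enter and leave each intermediate coset of $Y=\langle x,y,N\rangle/N$ at vertices prescribed by the $S_0$-edges of the quotient cycle, so you would need a hamiltonian grid path between two \emph{arbitrary} prescribed vertices of each coset; \cref{ChenQuimpoEvenGrid} only supplies paths from a \emph{corner} to an odd-distance vertex, so the splicing you describe is not available in general, quite apart from the question of where the concatenation terminates.

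The paper nests the construction the other way, which dissolves the obstacle. Take the grid hamiltonian path $(t_i)_{i=1}^{k\ell-1}$ from $e$ to $x^py^q$ inside $Y$, take a hamiltonian path $L$ in $\Cay\bigl(H/\langle x,y,N\rangle\bigr)$, and form $(L, t_{2i-1}, L^{-1}, t_{2i})_{i=1}^{k\ell/2}\#$: between consecutive grid steps the path sweeps back and forth across all cosets of $Y$ along $L$ and $L^{-1}$. Every coset is entered only at translates of the current grid vertex, so no between-prescribed-vertices hamiltonian path is ever needed; and since $H$ is abelian the contributions of $L$ and $L^{-1}$ cancel in pairs, so the product of the labels is automatically $t_1t_2\cdots t_{k\ell-1}=x^py^q$ with no case analysis. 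To repair your write-up you should either adopt this interleaving or actually carry out (and justify) the endpoint/parity analysis you defer, which as stated you have not done.
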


 \begin{proof}
If we identify the vertices of $P_k \mathbin\Box P_\ell$ with $\{\, (i,j) \mid 0 \le i < k, \ 0 \le j < \ell \,\}$ in the natural way, then the map $(i,j) \mapsto x^i y^j$ is an isomorphism from $P_k \mathbin\Box P_\ell$ to a subgraph~$X$ of $\Cay \bigl( \langle x,y \rangle;  x,y \bigr)$. So \cref{ChenQuimpoEvenGrid} provides a hamiltonian path $(t_i)_{i=1}^{k\ell-1}$ in~$X$ from~$e$ to $x^p y^q$.
So $t_1 t_2 \cdots t_{k\ell-1} = x^p y^q$.

Let $L = (u_j)_{j=1}^n$ be a hamiltonian path in $\Cay \bigl( H/ \langle x,y,N \rangle \bigr)$, and let 
	$$ (s_i)_{i=1}^r = (L, t_{2i-1}, L^{-1}, t_{2i})_{i=1}^{k\ell/2}\# .$$
From the definition of $k$ and~$\ell$, we see that the natural map from~$X$ to $\Cay \bigl( \langle x,y, N \rangle / N; x,y \bigr)$ is an isomorphism onto a spanning subgraph. Therefore, $(s_i)_{i=1}^r$ is a hamiltonian path in $\Cay( H/N ;S)$. Since $H$ is abelian, it is easy to see that $s_1s_2 \cdots s_r = x^p y^q$.\refnote{ChenQuimpoOddEndptvoltage}
 \end{proof}

Given a hamiltonian cycle~$C_0$ in $\Cay(\quot G ; S)$, the following result often provides a second hamiltonian cycle~$C_1$, such that the voltage of at least one of these two cycles generates~$G'$. (Then \cref{FGL} provides a hamiltonian cycle in $\Cay(G;S)$.)

\begin{lem}[cf.\ Maru\v si\v c \cite{Marusic-HamCircCay} and Durnberger \cite{Durnberger-semiprod}, or see {\cite[Lem.~3.1]{Morris-OddPQCommHam}}] \label{StandardAlteration}
Assume:
	\begin{itemize}
	\item $N$ is an abelian normal subgroup of~$G$, such that $G/N$ is abelian,
	\item $C_0$ is an oriented hamiltonian cycle in $\Cay(G/N;S)$,
	\item $s,t,u \in S^{\pm1}$ and $h \in G$, 
	\item $C_0$ contains:
			\begin{itemize}
		\item the oriented path $[\quot {h s^{-1} u^{-1}} ](s, t, s^{-1})$, 
		and
		\item either the oriented edge $[\quot h](t)$ or the oriented edge $[\quot{ht}](t^{-1})$. 
		\end{itemize}
	\end{itemize}
Then there is a hamiltonian cycle $C_1$ in $\Cay(G/N;S)$, such that \refnote[20]{StandardAlterationPf}
	$$ \left( \bigl( \voltage C_0 \bigr)^{-1} \bigl( \voltage C_1 \bigr) \right)^h 
	= \begin{cases}
	[ u, t^{-1}] \, [s, t^{-1}]^u
		&\text{if $C_0$ contains $[\quot h](t)$}, \\
	[t^{-1}, u] \, [s, t^{-1}]^u
		&\text{if $C_0$ contains $[\quot{ht}](t^{-1})$}
	. \end{cases}$$
Furthermore, $C_0$ and~$C_1$ have exactly the same oriented edges, except for some of the edges in the subgraph induced by $\{\quot h, \quot{hu^{-1}}, \quot{hs^{-1}u^{-1}}, \quot{ht},  \quot{htu^{-1}},  \quot{hts^{-1}u^{-1}} \}$.
\end{lem}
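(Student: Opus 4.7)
The plan is to construct $C_1$ by a local alteration inside the six-vertex subgraph, verify that the modification is still a hamiltonian cycle, and then compute the voltage difference directly.

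Define $C_1$ from $C_0$ as follows. In both cases, delete the subpath $[\quot{hs^{-1}u^{-1}}](s,t,s^{-1})$ from $C_0$ and insert instead the single edge $[\quot{hs^{-1}u^{-1}}](t)$; both walks arrive at $\quot{hts^{-1}u^{-1}}$ because $G/N$ is abelian. In the first case (where $C_0$ contains $[\quot h](t)$), also delete that edge and insert the path $[\quot h](u^{-1},t,u)$, which likewise ends at $\quot{ht}$; in the second case, delete $[\quot{ht}](t^{-1})$ and insert $[\quot{ht}](u^{-1},t^{-1},u)$. In either case the alterations take place entirely inside the six-vertex subgraph, and they preserve the pairing of external endpoints ($\quot{hs^{-1}u^{-1}}$ with $\quot{hts^{-1}u^{-1}}$, and $\quot h$ with $\quot{ht}$), so the rest of the cycle is untouched and each of the six internal vertices is still visited exactly once. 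Hence $C_1$ is a hamiltonian cycle, and listing the edges that appear in exactly one of $C_0, C_1$ confirms the final clause of the statement.

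For the voltage computation in the first case, traverse $C_0$ starting at $\quot h$ with representative $h\in G$ and decompose its label sequence as $C_0 = E_2 \cdot R' \cdot E_1 \cdot R''$ with $E_2=(t)$ and $E_1=(s,t,s^{-1})$. Let $\rho$ and $\sigma$ denote the products in $G$ of the labels along $R'$ and $R''$; lifting $C_0$ and $C_1$ from $h$ returns to $hX_0$ and $hX_1$ with $X_0 = t\,\rho\, sts^{-1}\,\sigma$ and $X_1 = u^{-1}tu\,\rho\, t\,\sigma$, so $((\voltage C_0)^{-1}(\voltage C_1))^h = X_0^{-1}X_1$. Using the identities $t^{-1}u^{-1}tu = [t,u]$ and $st^{-1}s^{-1} = [s^{-1},t]\,t^{-1}$, and the abelianness of $N$ to commute the two commutators that appear, this reduces to
$$X_0^{-1}X_1 \;=\; [s^{-1},t]^\sigma \, [t,u]^{\rho t\sigma} .$$
Because $[s^{-1},t],[t,u]\in[G,G]\subseteq N$ and $N$ is abelian, conjugation of either of these commutators by any element $g\in G$ depends only on the class $\quot g$. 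Reading $\quot\sigma = \quot{ust^{-1}}$ and $\quot{\rho t\sigma} = \quot{t^{-1}}$ directly off the cycle (the latter being precisely the condition $X_0\in N$), and using the explicit calculations $[s^{-1},t]^{st^{-1}}=[s,t^{-1}]$ and $[t,u]^{t^{-1}}=[u,t^{-1}]$, one obtains $X_0^{-1}X_1 = [s,t^{-1}]^u\,[u,t^{-1}] = [u,t^{-1}]\,[s,t^{-1}]^u$, as required.

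The second case is entirely analogous: now $E_2=(t^{-1})$ appears at the end of the cycle rather than at the beginning, and repeating the calculation the only substantive change is that the identity $tu^{-1}t^{-1}u = [t^{-1},u]$ replaces $[u,t^{-1}]$, producing the second formula. The main bookkeeping obstacle throughout is keeping track of the $G/N$-images of the intermediate path-products correctly, since these images determine which conjugate of each commutator ultimately survives in $N$; a single arithmetic slip produces a result that differs from the claimed formula by an ``extra'' conjugation inside $N$.
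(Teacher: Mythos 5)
Your construction of $C_1$ (delete $[\quot{hs^{-1}u^{-1}}](s,t,s^{-1})$ in favour of $[\quot{hs^{-1}u^{-1}}](t)$, and replace the $t^{\pm1}$-edge at $\quot h$ or $\quot{ht}$ by the detour through $\quot{hu^{-1}}$ and $\quot{htu^{-1}}$) is exactly the paper's, and your voltage computation --- reducing $X_0^{-1}X_1$ to $[s^{-1},t]^{\sigma}[t,u]^{\rho t\sigma}$ and then evaluating the conjugations via their images in the abelian quotient $G/N$ --- is the same direct calculation the paper performs, just with the bookkeeping packaged into $\rho$ and $\sigma$ rather than into partial products of the $s_i$. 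The argument is correct and takes essentially the same route as the paper's proof.
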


\begin{lem}[{}{\cite[Lem.~2.8]{Durnberger-semiprod}}] \label{Durnberger-commuting}
Assume 
	\begin{itemize}
	\item $S$ is an irredundant generating set of~$G$,
	\item $s,t \in S$, with $s \neq t$,
	\item $s$ commutes with~$t$,
	\item $\langle S \sm \{s\} \rangle \normal G$,
	and
	\item there is a hamiltonian cycle in $\Cay \bigl( \langle S \sm \{s\} \rangle;  S \sm \{s\}  \bigr)$.
	\end{itemize}
Then there is a hamiltonian cycle in $\Cay(G;S)$.
\end{lem}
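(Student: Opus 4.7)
Set $H=\langle S\sm\{s\}\rangle$. The hypotheses give $H\trianglelefteq G$, and the irredundancy of~$S$ forces $s\notin H$, so $k:=[G:H]\ge 2$. Since $G=H\langle s\rangle$, the quotient $G/H$ is cyclic of order~$k$, generated by~$\bar s$. Moreover, $S\sm\{s\}$ is an irredundant generating set of~$H$ (otherwise, dropping a redundant element of $S\sm\{s\}$ would, together with~$s$, still generate~$G$), so the given hamiltonian cycle $C_H=(r_1,\ldots,r_n)$ in $\Cay(H;S\sm\{s\})$ must use every element of $S\sm\{s\}$ as an edge-label. In particular $t^{\pm 1}$ appears among the $r_i$, and by cyclically rotating (and possibly reversing) $C_H$, I may assume $r_n=t$.

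The candidate hamiltonian cycle in $\Cay(G;S)$ is
\[ W=(r_1,r_2,\ldots,r_{n-1},s)^k \]
starting at~$e$. Each of the $k$ iterations fills one coset of~$H$: the partial products $e,r_1,r_1r_2,\ldots,r_1\cdots r_{n-1}$ run through all of~$H$, and left-multiplication by the iteration's starting point (which lies in a single coset of~$H$) is a bijection of~$G$, so the $n$ vertices visited in each iteration form a complete coset. The trailing $s$-edges move from $H$ to $sH$ to $s^2H$ and so on, so $W$ visits every vertex of~$G$ exactly once. A straightforward induction on the iteration count shows that the endpoint of~$W$ is $(r_n^{-1}s)^k=(t^{-1}s)^k$, which, using $st=ts$, simplifies to $t^{-k}s^k$. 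Consequently, whenever $s^k=t^k$ in~$H$, the walk~$W$ is itself a hamiltonian cycle.

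The main obstacle is to handle the case $s^k\ne t^k$, in which the discrepancy $d:=s^kt^{-k}\in H$ must be absorbed. The plan is to exploit once more that $s$ and~$t$ commute: the identity $st=ts$ means that in any subwalk of the form $[v](t,s)$ we may swap the two edges to obtain $[v](s,t)$, interchanging the middle vertex $vt$ with~$vs$ while keeping the endpoints and the total edge-set unchanged. This gives us the freedom, at each occurrence of a $t$-edge immediately before an $s$-transition in~$W$, to ``trade'' a vertex of the current coset for a vertex of the next coset. By combining a carefully chosen sequence of such swaps with the rotational freedom in our choice of~$C_H$ (which lets us select which $r_j$ plays the role of the last edge in each iteration), I expect to be able to reroute~$W$ so that its cumulative endpoint is shifted by exactly~$d$, producing a true hamiltonian cycle. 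The technical crux of the argument is showing that the swaps can be coordinated to yield precisely the correction~$d$ while every vertex remains visited exactly once; this is where the hypothesis that $s$ commutes with~$t$ is indispensable.
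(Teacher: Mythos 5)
The paper does not prove this lemma itself --- it cites Durnberger --- so I am comparing your argument with the standard one. The first half of your proposal is sound: $H=\langle S\sm\{s\}\rangle$ is normal, $G/H=\langle\bar s\rangle$ is cyclic of order $k\ge 2$, irredundancy forces $C_H$ to use a $t^{\pm1}$-edge, and the spiral walk $W=(r_1,\ldots,r_{n-1},s)^k$ does visit every vertex exactly once and closes up precisely when $(t^{-1}s)^k=t^{-k}s^k=e$. But that closure condition is an extra hypothesis not granted by the lemma, and the repair you sketch for $s^k\ne t^k$ cannot succeed. Transposing two adjacent commuting edge labels in a walk leaves the product of the labels, hence the terminal vertex, unchanged --- you say so yourself (``keeping the endpoints \dots unchanged'') --- so no sequence of $(t,s)\mapsto(s,t)$ swaps can move the endpoint $t^{-k}s^k$ to $e$. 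In addition, each such swap drops the vertex $vt$ from the itinerary and visits $vs$ twice, destroying hamiltonicity; and $W$ as built need not contain any $t$-edge immediately preceding an $s$-edge in the first place, since the $t$-edge is exactly the one you deleted. Rotating $C_H$ only lets you replace the path-endpoint $t^{-1}$ by finitely many alternatives, with no guarantee that any choice makes the walk close.

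The actual proof abandons the single spiral walk in favour of local surgery, which is why no closure obstruction ever arises. Put into each coset $s^iH$ a (suitably chosen) left translate of the \emph{entire} cycle $C_H$, all $n$ edges included: this gives $k$ pairwise vertex-disjoint cycles covering $G$. Because $st=ts$, every oriented edge $[g](t)$ of such a cycle sits on a $4$-cycle $[g](t,s,t^{-1},s^{-1})$ through $g,gt,gst,gs$ straddling two consecutive cosets, so one may delete the two $t$-edges $\{g,gt\}$ and $\{gs,gst\}$ and insert the two $s$-edges $\{g,gs\}$ and $\{gt,gst\}$, merging the two cycles into one; this is exactly the connected sum $\connsum_t^s$ of \cref{ConnectedSumDef}, and iterating it along the chain $H,sH,\ldots,s^{k-1}H$ yields a hamiltonian cycle of $\Cay(G;S)$ outright. (Each intermediate coset needs two distinct $t$-edges for its two rungs; these exist because $C_H$ is a connected spanning subgraph, so it must cross between the cosets of $\langle S\sm\{s,t\}\rangle$ in $H$ at least twice, $S\sm\{s\}$ being irredundant for $H$.) Since the surgery operates on closed cycles rather than on one long walk that has to return to its start, there is no discrepancy $d$ to absorb.
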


We do not need the general theory of nilpotent groups, but we will make use of the following two facts.  (The first is essentially the definition of a nilpotent group, which can be found in any graduate-level textbook on group theory.)

\begin{lem}[{\cite[(iii) on p.~175 and Prop.~VI.1.h on page~176]{Schenkman}}] \ \label{G'inZ}
	\begin{enumerate}
	\item Every abelian group is nilpotent.
	\item If $G/Z(G)$ is nilpotent, then $G$ is nilpotent.
	\end{enumerate}
Therefore, if $G' \subseteq Z(G)$ \textup(in other words, if\/ $G/Z(G)$ is abelian\textup), then $G$ is nilpotent.
\end{lem}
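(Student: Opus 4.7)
The plan is to cite Schenkman for the two numbered statements and then derive the final ``Therefore'' clause from them by a one-line argument. Parts \textup{(1)} and \textup{(2)} are standard facts in the theory of nilpotent groups that can be stated as quoted from the reference: an abelian group has lower central series terminating at the first step, and nilpotency is inherited upward through the center (the defining property of the upper central series).

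For the concluding sentence, I would first establish the equivalence stated parenthetically: $G' \subseteq Z(G)$ if and only if $G/Z(G)$ is abelian. This is immediate from the fact that, for any normal subgroup $N$ of~$G$, the quotient $G/N$ is abelian precisely when $N \supseteq G'$; applied to $N = Z(G)$, this gives $G/Z(G)$ abelian iff $Z(G) \supseteq G'$. Then, assuming $G' \subseteq Z(G)$, the quotient $G/Z(G)$ is abelian, hence nilpotent by~(1), hence $G$ itself is nilpotent by~(2).

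There is essentially no obstacle here: the lemma is a packaging of two textbook statements together with the elementary observation that a quotient is abelian iff the kernel contains the commutator subgroup. The only choice to make is whether to spell out the equivalence between the two formulations $G' \subseteq Z(G)$ and ``$G/Z(G)$ is abelian,'' which I would include for the reader's convenience even though it is standard.
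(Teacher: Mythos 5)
Your proposal is correct and matches the paper's intent exactly: the two numbered parts are simply quoted from Schenkman, and the concluding clause follows by the one-line chain you give ($G'\subseteq Z(G)$ iff $G/Z(G)$ abelian, hence nilpotent by (1), hence $G$ nilpotent by (2)). The paper gives no further argument, so there is nothing to add.
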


\begin{thm}[{\cite{GhaderpourMorris-Nilpotent}}] \label{GhaderpourMorrisNilpotent}
If $G$ is a nontrivial, nilpotent, finite group, and the commutator subgroup of~$G$ is cyclic, then every connected Cayley graph on~$G$ has a hamiltonian cycle.
\end{thm}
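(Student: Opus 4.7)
The plan is to argue by induction on $|G|$. The base case $G$ abelian is the classical fact that every connected Cayley graph on a nontrivial finite abelian group has a hamiltonian cycle. For the inductive step, assume $G$ is nonabelian nilpotent with cyclic $G' \neq 1$, and let $S$ be an irredundant generating set. Because $G$ is nilpotent, every nontrivial normal subgroup of $G$ meets $Z(G)$ nontrivially; in particular, $G' \cap Z(G)$ is a nontrivial subgroup of the cyclic group $G'$. Picking any prime $p$ dividing $|G' \cap Z(G)|$, let $N$ be the unique subgroup of order $p$ in $G'$; then $N \subseteq G' \cap Z(G)$, so $N$ is central in $G$. The quotient $G/N$ is nilpotent with cyclic commutator subgroup $G'/N$ of smaller order, so by induction $\Cay(G/N; \quot S)$ has a hamiltonian cycle $C_0$, where $\quot S$ is the image of $S$.

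If $\voltage C_0 \neq e$, then, since $N$ has prime order, $\voltage C_0$ generates $N$, so \cref{FGL} lifts $C_0$ to a hamiltonian cycle in $\Cay(G;S)$. The main obstacle is the case $\voltage C_0 = e$, where I must produce a different hamiltonian cycle $C_1$ in $\Cay(G/N;\quot S)$ whose voltage is nontrivial in $N$. For this I apply \cref{StandardAlteration}: whenever $C_0$ exhibits the prescribed local configuration (a consecutive $(s,t,s^{-1})$-path plus a specified $t$-edge) at some vertex $\quot h$, one may reroute a few edges of $C_0$ to obtain $C_1$ with $\bigl((\voltage C_0)^{-1}\voltage C_1\bigr)^h$ equal to an explicit commutator expression $[u, t^{-1}]\,[s, t^{-1}]^u$ in chosen elements $s, t, u \in S^{\pm 1}$.

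The hard part will be showing that some admissible choice of $s,t,u,h$ makes this expression nontrivial in $N$. The strategy is proof by contradiction: if every allowable alteration produced a trivial voltage difference, then the resulting identities on commutators of generators would, after being imposed across all configurations present in $C_0$, be sufficiently restrictive to force $G$ to be abelian, contradicting $G' \neq 1$. Corner cases where \cref{StandardAlteration} does not apply directly---e.g., when two elements of $S^{\pm 1}$ coincide modulo $N$, or when some pair of generators commutes---are handled by \cref{DoubleEdge} and \cref{Durnberger-commuting}, respectively. Organizing these case distinctions according to the size and adjacency structure of $\quot S$ in $\Cay(G/N;\quot S)$ is the most labor-intensive part of the argument.
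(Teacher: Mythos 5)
First, a point of comparison: the paper does not prove this statement at all --- it is imported verbatim from \cite{GhaderpourMorris-Nilpotent} and used as a black box (e.g., in \cref{abDefn} to rule out the nilpotent case). So there is no internal proof to measure your argument against; a correct proof here would amount to reproving the main theorem of that reference.

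Judged on its own terms, your proposal has a genuine gap. The opening reduction is the standard one and is fine: since $G$ is nilpotent, $G'\cap Z(G)$ is nontrivial, so there is a central subgroup $N$ of prime order $p$ inside the cyclic group $G'$; induction gives a hamiltonian cycle $C_0$ in $\Cay(G/N;\quot S)$, and \cref{FGL} lifts it when $\voltage C_0 \neq e$. But everything after ``the main obstacle is the case $\voltage C_0 = e$'' is a statement of intent rather than an argument, and it is exactly there that the entire content of \cite{GhaderpourMorris-Nilpotent} lives. Concretely: (i) \cref{StandardAlteration} requires the quotient $G/N$ to be \emph{abelian}, which in your inductive step holds only when $G'=N$, i.e.\ $|G'|=p$; for $|G'|>p$ the lemma you invoke does not apply to a cycle in $\Cay(G/N;\quot S)$. (ii) The cycle $C_0$ supplied by the inductive hypothesis is a black box, and you have not shown it contains the local configuration (a path $(s,t,s^{-1})$ together with a matching $t$-edge) that the alteration needs; the known proofs avoid this by \emph{constructing} explicit cycles with prescribed subpaths rather than taking an arbitrary one. (iii) The claim that failure of every alteration ``would force $G$ to be abelian'' is far too optimistic: the achievable voltage changes are specific commutator words such as $[u,t^{-1}]\,[s,t^{-1}]^u$, and their all lying outside $N$ (or being trivial) is perfectly compatible with $G$ being nonabelian --- this is precisely why the present paper needs a lengthy case-by-case analysis to handle even $|G'|=2p$, and why \cref{ScapG'}, \cref{UsualConnSum}, and the connected-sum machinery exist. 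As written, the proposal defers the entire difficulty to ``the hard part'' without resolving it.
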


The following observation is well known (and easy to prove). 

\begin{lem}[{}{\cite[Lem.~2.27]{M2Slovenian-LowOrder}}] \label{NormalEasy}
 Let $S$ generate a finite group~$G$ and let $s \in S$, such that $\langle s \rangle
\normal G$. If
 \begin{itemize}
 \item $\Cay \bigl( G/\langle s \rangle ; S \bigr)$ has a
hamiltonian cycle,
 and
 \item either
 \begin{enumerate}
 \item \label{NormalEasy-Z} 
 $s \in Z(G)$,
 or
 \item \label{NormalEasy-notZ} 
 $Z(G) \cap \langle s \rangle = \{e\}$,
 or
 \item \label{NormalEasy-p}
 $|s|$ is prime,
 \end{enumerate}
 \end{itemize}
 then $\Cay(G;S)$ has a hamiltonian cycle.
 \end{lem}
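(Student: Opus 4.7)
The plan is to apply the Factor Group Lemma \cref{FGL} with $N = \langle s \rangle$. Let $C = (s_i)_{i=1}^m$ denote the given hamiltonian cycle in $\Cay(\quot G; S)$, where $\quot G = G/\langle s \rangle$, and let $\voltage C = s^k \in \langle s \rangle$ be its voltage. If $\gcd(k, |s|) = 1$, then $s^k$ generates $\langle s \rangle$ and \cref{FGL} immediately produces a hamiltonian cycle in $\Cay(G;S)$. So throughout we assume $\gcd(k, |s|) > 1$ and treat the three hypotheses separately.

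Under hypothesis~(3), $|s|$ is prime, so the assumption $\gcd(k,|s|) > 1$ forces $s^k = e$. The lift of $C$ starting from any element of $\langle s \rangle$ is then a closed walk in $\Cay(G;S)$ of length~$m$ that visits each coset of $\langle s \rangle$ exactly once, and the $|s|$ such lifts (one per element of $\langle s \rangle$) are pairwise vertex-disjoint and together exhaust~$G$. Because $s \in S$ and $\langle s \rangle \normal G$, every $s$-edge joins two vertices in the same coset of $\langle s \rangle$; one obtains the desired hamiltonian cycle by deleting a common edge of~$C$ from each of the $|s|$ parallel lifts and splicing the resulting paths together via $s$-edges at one shared transition point.

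Under hypothesis~(1), $s$ is central, so each coset of $\langle s \rangle$ carries an \emph{$s$-column} of $|s|$ vertices connected by $s$-edges. Construct a hamiltonian cycle of $\Cay(G;S)$ by traversing $C$ coset by coset, and within each coset walking through all $|s|$ vertices using $|s|-1$ consecutive $s^{\pm 1}$-edges. This uses exactly $m|s| = |G|$ edges and visits each vertex once; the direction of the $s$-column in each coset is a free binary choice, and these $m$ choices give enough flexibility to close the walk into a cycle (the closure condition reduces to a single congruence modulo~$|s|$ on the signed sum of the column directions).

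Under hypothesis~(2), no nontrivial power of $s$ lies in $Z(G)$, so for every $s^j \neq e$ there exists $g \in G$ with $[g, s^j] \neq e$, and this commutator is a nontrivial element of $\langle s \rangle$ by normality. Exploiting this non-central conjugation action, we modify $C$ by cycle surgery (in the spirit of \cref{StandardAlteration}) so that the new hamiltonian cycle in $\Cay(\quot G; S)$ has voltage differing from $s^k$ by such a commutator. Each such adjustment strictly reduces $\gcd(k, |s|)$, and after finitely many steps the voltage generates $\langle s \rangle$, at which point \cref{FGL} applies.

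The main obstacle is hypothesis~(2): one must exhibit concrete cycle surgeries that realize the desired voltage adjustments while preserving the hamiltonian property of the cycle in $\Cay(\quot G; S)$, and verify that the adjustments can be iterated enough times to exhaust~$\langle s \rangle$. Cases~(1) and~(3) are instead settled by the direct combinatorial constructions described above, both of which rely crucially on the normality of $\langle s \rangle$ and on the availability of the generator $s \in S$ for connecting vertices within each coset.
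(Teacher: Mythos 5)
This lemma is not proved in the paper at all: it is quoted as known from \cite[Lem.~2.27]{M2Slovenian-LowOrder}. Your opening reduction (apply \cref{FGL} with $N=\langle s\rangle$ unless the voltage $s^k$ fails to generate $\langle s\rangle$) is the right starting point, but each of your three case arguments has a real gap. In case~(1), the closure condition for your column-by-column walk is $\sum_{i=1}^m\epsilon_i\equiv k\pmod{|s|}$ with $\epsilon_i\in\{\pm1\}$, and this congruence need not be solvable: the signed sum always has the parity of~$m$, so the construction already fails for $G=\ZZ_2\times\ZZ_3$ with $S=\{(1,0),(0,1)\}$, where $m=3$, $|s|=2$, $k=0$. (The graph is the $3$-prism; the cycle one actually wants is the boustrophedon realizing a spanning copy of $P_m\mathbin\Box C_{|s|}$, not a walk that exhausts each coset before leaving it.) In case~(3), deleting the lifts of one edge of~$C$ and splicing with $s$-edges at that single transition point need not yield one cycle: writing $s^c={}^{g_{i-1}}s$ and $s^{c'}={}^{g_i}s$ for the conjugates of~$s$ in the two cosets at the ends of the chosen edge, the splice runs through path indices $0,\,c,\,c+c',\,2c+c',\dots$ and closes prematurely whenever $c+c'\equiv 0\pmod{|s|}$. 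In $G=\ZZ_p\rtimes\ZZ_4$ with the generator $t$ of $\ZZ_4$ inverting $\ZZ_p=\langle s\rangle$, this happens for \emph{every} edge of the quotient cycle $(t,t,t,t)$, and one can check that the subgraph consisting of the lifts together with the $s$-edges of any two adjacent cosets has no hamiltonian cycle at all, so no choice of transition point rescues the argument.

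Case~(2) is the heart of the lemma and you have explicitly left it open; worse, the strategy you sketch cannot work. In the same example $G=\ZZ_p\rtimes\ZZ_4$, the quotient $G/\langle s\rangle\cong\ZZ_4$ is generated by the single connection-set element $\bar t$, so the only hamiltonian cycles in $\Cay(G/\langle s\rangle;S)$ are the two orientations of a $4$-cycle, both with trivial voltage: there is no cycle surgery available and the Factor Group Lemma route is dead. The proof of~(2) in \cite{M2Slovenian-LowOrder} --- which this paper relies on explicitly in the proof of \cref{ScapG'} --- does something different: it shows that the walk $(t_i,s^{|s|-1})_{i=1}^n$ itself closes up into a hamiltonian cycle of $\Cay(G;S)$, using the hypothesis $Z(G)\cap\langle s\rangle=\{e\}$ to control the product of conjugates of $s^{|s|-1}$ that accumulates along the walk. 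You would need to supply that computation (or another device); the iterative voltage-adjustment you describe does not exist.
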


\begin{cor} \label{ScapG'}
Suppose
	\begin{itemize}
	\item $G'$ is cyclic of order~$pq$, where $p$ and~$q$ are distinct primes,
	\item $S$ is an irredundant generating set of~$G$,
	and
	\item some nontrivial element~$s$ of~$S$ is in~$G'$.
	\end{itemize}
Then $\Cay(G;S)$ has a hamiltonian cycle.
\end{cor}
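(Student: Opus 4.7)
The plan is to split into cases based on $|s|$, which, since $s \neq e$ lies in the cyclic group $G'$ of order~$pq$, must be one of $p$, $q$, or~$pq$. In every case, $\langle s \rangle$ is characteristic in~$G'$, hence normal in~$G$, so we are set up to apply \cref{NormalEasy} (or \cref{FGL}) after passing to a suitable quotient.

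When $|s|$ is a prime---say $|s|=p$, with $|s|=q$ being symmetric---the quotient $G/\langle s\rangle$ has commutator subgroup $G'/\langle s\rangle$ of the other prime order~$q$, so \cref{G'=p} produces a hamiltonian cycle in $\Cay(G/\langle s\rangle;S)$, and \fullcref{NormalEasy}{p} lifts it to $\Cay(G;S)$ since $|s|=p$ is prime. This is the clean case.

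When $|s|=pq$, we have $\langle s\rangle=G'$, so $\quot G$ is abelian and $\Cay(\quot G;S)$ has a hamiltonian cycle (by the classical abelian result, with \cref{DoubleEdge} covering the degenerate case $|\quot G|=2$, where the irredundance of~$S$ forces two generators with coinciding nontrivial images in~$\quot G$). My first move is to apply \cref{NormalEasy} with $N=\langle s\rangle$: if $s\in Z(G)$, then $G'\subseteq Z(G)$, so \cref{G'inZ} makes $G$ nilpotent and \cref{GhaderpourMorrisNilpotent} finishes; and if $Z(G)\cap\langle s\rangle=\{e\}$, then \fullcref{NormalEasy}{notZ} applies directly.

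The hardest subcase is $s\notin Z(G)$ combined with $Z(G)\cap\langle s\rangle$ nontrivial; as a proper subgroup of $\langle s\rangle=G'$, it must equal either the order-$p$ subgroup~$P$ or the order-$q$ subgroup~$Q$, and I may assume $P\subseteq Z(G)$ without loss of generality. Then $(G/P)'\cong Q$ is cyclic of prime order~$q$, so \cref{G'=p} yields a hamiltonian cycle~$C$ in $\Cay(G/P;S)$, and I plan to lift to $\Cay(G;S)$ via \cref{FGL} with $N=P$, which succeeds whenever $\voltage C$ is nontrivial in the prime-order group~$P$. The main obstacle will be the case $\voltage C = e$: my plan there is to apply \cref{StandardAlteration} to modify~$C$ into a second hamiltonian cycle $C'$ whose voltage differs from $\voltage C$ by a commutator of the form $[u,t^{-1}]\,[s,t^{-1}]^u$, and to choose the parameters $s,t,u,h$ of \cref{StandardAlteration} so that this commutator projects nontrivially to the central subgroup~$P$, forcing $\voltage C'$ to generate~$P$ and letting \cref{FGL} apply.
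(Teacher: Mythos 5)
Your reduction to the case $\langle s \rangle = G'$ with $G' \cap Z(G)$ equal to one of the two prime subgroups matches the paper's, and your easy cases ($|s|$ prime via \fullcref{NormalEasy}{p}; $s$ central; $Z(G) \cap \langle s\rangle$ trivial) are handled essentially as in the paper. The gap is in the remaining case. Having arranged that the order-$p$ subgroup $P$ is central, you take the hamiltonian cycle $C$ in $\Cay(G/P;S)$ supplied by \cref{G'=p} and, when $\voltage C$ is trivial, propose to repair it with \cref{StandardAlteration}. That lemma does not apply here: it requires the cycle to live in $\Cay(G/N;S)$ with $G/N$ \emph{abelian}, whereas $G/P$ has commutator subgroup of order~$q$, and its proof (the rerouting geometry and the voltage formula) uses that commutativity. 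Even setting this aside, $C$ is a black box: \cref{G'=p} gives no information about which oriented edges and paths $C$ contains, so you cannot guarantee the configuration $[\quot{hs^{-1}u^{-1}}](s,t,s^{-1})$ together with $[\quot h](t)$ that the alteration needs, nor can you freely tune the parameters $s,t,u$ so that $[u,t^{-1}]\,[s,t^{-1}]^u$ projects nontrivially to~$P$ (note, for instance, that when the central prime is~$2$ the standard choice $s=u$ produces a square, whose $P$-component is always trivial).

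The paper closes this case by a different construction. Writing $\ZZ_q$ for the central prime subgroup and $\ZZ_p$ for the one with $\ZZ_p \cap Z(G) = \{e\}$, it observes that $\widehat G = G/\ZZ_p$ is nilpotent with central commutator subgroup $\ZZ_q$, so the Keating--Witte argument yields a hamiltonian cycle $(t_i)_{i=1}^n$ in $\Cay(G/G';S)$ whose voltage already generates $\ZZ_q$ modulo~$\ZZ_p$; it then expands this to the explicit hamiltonian cycle $(t_i, s^{p-1})_{i=1}^n$ in $\Cay(G/\ZZ_q;S)$ using the construction from the proof of \fullcref{NormalEasy}{notZ} (this is where $\ZZ_p \cap Z(G) = \{e\}$ and $\langle s \rangle = G'$ enter), computes its voltage directly, and applies \cref{FGL} with $N = \ZZ_q$. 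To repair your argument you would need either such an explicit cycle with controlled voltage in the nonabelian quotient, or a strengthening of \cref{G'=p} that controls the voltage; the bare existence statement does not suffice.
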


\begin{proof}
We may assume $G' = \ZZ_p \times \ZZ_q$. Since every subgroup of a cyclic, normal subgroup is also normal, we know that $\langle s \rangle \normal G$. Also, there are hamiltonian cycles in $\Cay(G/\ZZ_p; S)$, $\Cay(G/\ZZ_q; S)$, and $\Cay(G/G'; S)$ (by \cref{G'=p} and the elementary fact that Cayley graphs on abelian groups have hamiltonian cycles). Hence, we may assume $\langle s \rangle = G'$ and $G' \cap Z(G) = \ZZ_q$ (perhaps after interchanging $p$ and~$q$), for otherwise \cref{NormalEasy} applies.

Let $\widehat{G} = G/\ZZ_p$. We may assume $|\widehat{G}| \neq 27$, for otherwise $|G| = 27p$ so \cref{|G|small} applies. Then, since $\widehat{G}$ is nilpotent \csee{G'inZ} and its commutator subgroup is~$\ZZ_q$, the proof in \cite[\S4]{KeatingWitte} implies there is a hamiltonian cycle $(t_i)_{i=1}^n$ in $\Cay \bigl( \widehat{G} / \widehat{G}' ; S')$ whose voltage%
\refnote{KWUsesFGL}
generates~$\widehat{G}'$.
 Then, since $\ZZ_p \cap Z(G) = \{e\}$, the proof of \fullcref{NormalEasy}{notZ} in \cite[Lem.~2.27(2)]{M2Slovenian-LowOrder} tells us that $( t_i, s^{p-1} )_{i=1}^n$ is a hamiltonian cycle in $\Cay \bigl( G/\ZZ_q ; S \bigr)$. 

Note that, since $\widehat{G}$ is a nilpotent group whose commutator subgroup is in the center and has prime order~$q$, the order of $|\widehat{G}/\widehat{G}'|$ must be a multiple of~$q$; that is, $n$~is a multiple of~$q$ (cf.~\cref{Cents->Homo} below).\refnote{GbarDivbyq} 
Calculating modulo~$\ZZ_p$, we have
	\begin{align*}
	\voltage( t_i, s^{p-1} )_{i=1}^n 
	&\equiv s^{(p-1)n} \, \voltage( t_i )_{i=1}^n
		&& \text{($\widehat{s} \in \widehat G' = \widehat{\ZZ_q} \subseteq Z(\widehat{G})$)}
	\\&\equiv \voltage( t_i )_{i=1}^n
		&& \text{($n$ is a multiple of~$q$)}
	\\&\not\equiv e
		&& \text{($\voltage( t_i )_{i=1}^n$ generates~~$\widehat{G}'$)}
	 . \end{align*}
Therefore $\voltage( t_i, s^{p-1} )_{i=1}^n$ generates $\ZZ_q$. So \cref{FGL} tells us that $\bigl( ( t_i, s^{p-1} )_{i=1}^n \bigr){}^q$ is a hamiltonian cycle in $\Cay(G;S)$.
\end{proof}

\section{Assumptions, group theory, and connected sums}

\begin{assumps}
The remainder of this paper provides a proof of \cref{G'=2p}, so 
	\begin{itemize}
	\item $p$ is an odd prime, 
	\item $G$ is a finite group whose commutator subgroup has order~$2p$, 
	and 
	\item $S$~is an irredundant generating set of~$G$.\refnote{MinEnough}
	\end{itemize}
We wish to show that the Cayley graph $\Cay(G;S)$ has a hamiltonian cycle.
\end{assumps}

\subsection{Basic group theory}

\begin{assump}
Because of \cref{ScapG'}, we may assume $S \cap G' = \emptyset$.
\end{assump}

\begin{notation} \label{abDefn}
The assumption that the commutator subgroup has order $2p$ implies that $G'$ is cyclic (cf.\ \cite[\S2E, proof of Cor.~1.4]{Morris-OddPQCommHam}), so we may write 
	$$G' = \ZZ_2 \times \ZZ_p .$$
From \cref{GhaderpourMorrisNilpotent}, we may assume $G$ is not nilpotent, so $G' \nsubseteq Z(G)$ \csee{G'inZ}.
This implies $\ZZ_p \cap Z(G) = \{e\}$. \refnote{ZpZ}
Hence there exists $a \in S$, such that 
 	\begin{align} \label{aDefn}
	 \text{$a$~does not centralize~$\ZZ_p$} 
	 . \end{align}
Then there exists $b \in S$, such that  \refnote[14]{CommutatorGenZp}
	\begin{align} \label{bDefn}
	\text{$\ZZ_p \subseteq \langle [a,b] \rangle$} 
	. \end{align}
The assumptions \pref{aDefn} and \pref{bDefn} are the basis of most of the arguments in the later sections of the paper.
\end{notation}

For ease of reference, we now collect a few well-known facts from group theory (specialized to our setting).

\begin{lem} \label{Z2inFrattini}
If $S_0 \subseteq G$, such that $\langle S_0, \ZZ_2 \rangle = G$, then $\langle S_0 \rangle = G$.
\end{lem}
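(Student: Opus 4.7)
The plan is to let $H = \langle S_0 \rangle$ and show $[G:H] = 1$. First I would note that the subgroup $\ZZ_2$ of $G' = \ZZ_2 \times \ZZ_p$ is the unique order-$2$ subgroup of the cyclic group $G'$, hence characteristic in~$G'$; since $G' \normal G$, this gives $\ZZ_2 \normal G$.

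Because $\ZZ_2$ is normal, $H \ZZ_2$ is a subgroup of~$G$, and it contains both $H$ and $\ZZ_2$, so it equals $\langle H, \ZZ_2 \rangle = \langle S_0, \ZZ_2 \rangle = G$. Therefore $[G:H]$ divides $|\ZZ_2| = 2$, leaving only two possibilities.

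If $[G:H] = 2$, then $H$ is normal in~$G$ with $G/H$ of order~$2$ (hence abelian), so $G' \subseteq H$. In particular $\ZZ_2 \subseteq H$, and then $G = \langle S_0, \ZZ_2 \rangle \subseteq H$, contradicting $[G:H] = 2$. So $[G:H] = 1$, i.e., $\langle S_0 \rangle = G$.

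No step looks like a real obstacle; the only point requiring a moment's thought is the normality of~$\ZZ_2$ in~$G$, which is handled by the characteristic-subgroup observation above.
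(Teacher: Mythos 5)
Your proof is correct, but it takes a different route from the paper's. The paper observes that $\ZZ_2$, being a normal subgroup of order~$2$, lies in $Z(G)$, so adjoining it to~$S_0$ cannot change the commutator subgroup: $\langle S_0 \rangle' = \langle S_0, \ZZ_2 \rangle' = G'$. Hence $\langle S_0 \rangle \supseteq G' \supseteq \ZZ_2$, and therefore $\langle S_0 \rangle \supseteq \langle S_0, \ZZ_2 \rangle = G$ directly. You instead bound the index: since $\ZZ_2 \normal G$ (which you get from $\ZZ_2$ being characteristic in the cyclic group~$G'$, whereas the paper would get centrality for free from $|\ZZ_2|=2$), you have $H\ZZ_2 = G$ and so $[G:H] \le 2$; you then kill the index-$2$ case using the fact that an index-$2$ subgroup is normal with abelian quotient, hence contains $G' \supseteq \ZZ_2$. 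Both arguments ultimately rest on the same point --- that $\ZZ_2 \subseteq G'$ is a non-generator --- but the paper's version is a one-line computation with derived subgroups, while yours trades that for a contradiction argument that leans on the special behaviour of index~$2$ (and would not immediately generalize to replacing $\ZZ_2$ by a normal subgroup of odd prime order, where index-$p$ subgroups need not be normal; the paper's centrality argument also would not generalize there, so nothing is lost). No gaps.
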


\begin{proof}
Since $\ZZ_2 \subseteq Z(G)$, we have 
	$$\langle S_0 \rangle'  = \bigl\langle S_0, Z(G) \bigr\rangle'  \supseteq \langle S_0, \ZZ_2 \rangle' = G' .$$
Therefore 
	\begin{align*}
	\langle S_0 \rangle 
	&= \bigl\langle S_0 , \langle S_0 \rangle' \bigr\rangle
	= \langle S_0 , G' \rangle 
	\supseteq \langle S_0 , \ZZ_2 \rangle 
	= G
	. \qedhere \end{align*}
\end{proof}

\begin{cor} \label{Slessamin}
Suppose $S_0$ is a proper subset of~$S$, such that $\ZZ_p \subseteq \langle S_0 \rangle$. \textup(In particular, this will be the case if\/ $\{a,b\} \subseteq S_0$.\textup) Then $\langle \quot{S_0} \rangle \neq \quot G$.
\end{cor}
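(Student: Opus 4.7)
The plan is to argue by contradiction and reduce the statement to Lemma~\ref{Z2inFrattini}. I would suppose $\langle \quot{S_0} \rangle = \quot{G}$, which is equivalent to $\langle S_0 \rangle \cdot G' = G$, and aim to deduce $\langle S_0, \ZZ_2 \rangle = G$. Lemma~\ref{Z2inFrattini} would then yield $\langle S_0 \rangle = G$, contradicting the irredundancy of~$S$ (since $S_0$ is a proper subset of~$S$).

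First I would use the hypothesis $\ZZ_p \subseteq \langle S_0 \rangle$ together with the decomposition $G' = \ZZ_2 \times \ZZ_p$ to absorb the $\ZZ_p$-factor into $\langle S_0 \rangle$, rewriting $\langle S_0 \rangle \cdot G'$ as $\langle S_0 \rangle \cdot \ZZ_2$. Next, I would note that $\ZZ_2$ lies in $Z(G)$ (it is the unique order-$2$ subgroup of the cyclic group~$G'$, so it is characteristic in~$G'$ and hence normal in~$G$, and $\Aut(\ZZ_2)$ is trivial---this is the same centrality already used implicitly in the proof of Lemma~\ref{Z2inFrattini}). Since $\ZZ_2$ is central, the product set $\langle S_0 \rangle \cdot \ZZ_2$ coincides with the subgroup $\langle S_0, \ZZ_2 \rangle$, so the assumption becomes $\langle S_0, \ZZ_2 \rangle = G$, and Lemma~\ref{Z2inFrattini} finishes the argument.

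For the parenthetical remark, if $\{a,b\} \subseteq S_0$ then $[a,b] \in \langle S_0 \rangle$, and condition~\pref{bDefn} gives $\ZZ_p \subseteq \langle [a,b] \rangle \subseteq \langle S_0 \rangle$, so the hypothesis is automatic in that case.

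There is no real obstacle here: once one recognizes that the role of the hypothesis $\ZZ_p \subseteq \langle S_0 \rangle$ is precisely to collapse $G'$ down to its $\ZZ_2$-part, the statement follows in a single line from Lemma~\ref{Z2inFrattini} together with the irredundancy of~$S$.
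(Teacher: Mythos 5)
Your proposal is correct and follows essentially the same route as the paper: assume $\langle \quot{S_0} \rangle = \quot G$, use $\ZZ_p \subseteq \langle S_0 \rangle$ and $G' = \ZZ_2 \times \ZZ_p$ to reduce to $\langle S_0, \ZZ_2 \rangle = G$, then apply Lemma~\ref{Z2inFrattini} to contradict the irredundance of~$S$. The extra remarks on the centrality of~$\ZZ_2$ and the verification of the parenthetical case via~\pref{bDefn} are fine but not needed beyond what the paper already does.
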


\begin{proof}
Suppose $\langle \quot{S_0} \rangle = \quot G$. This means $\langle S_0, G' \rangle = G$. Since $G' = \ZZ_2 \times \ZZ_p$ and $\ZZ_p \subseteq \langle S_0 \rangle $, this implies $\langle  S_0, \ZZ_2 \rangle = G$. So \cref{Z2inFrattini} tells us that $\langle  S_0 \rangle = G$. This contradicts the fact that the generating set~$S$ is irredundant.
\end{proof}

\begin{lem} \label{Cents->Homo}
Let $H$ be a group.\refnote{Cents->HomoPf}
 If $x,y,z \in H$, and $y$~centralizes~$H'$, then $[xy,z] = [x,z] \, [y,z]$.
Therefore $[y^k,z] = [y,z]^k$ for all $k \in \ZZ$.
\end{lem}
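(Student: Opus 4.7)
The first identity is a direct commutator manipulation that exploits centralization. I would begin by expanding the left-hand side straight from the definition:
\[
[xy,z] = (xy)^{-1} z^{-1} (xy) z = y^{-1} x^{-1} z^{-1} x y z.
\]
The goal is to massage this into $[x,z]\,[y,z] = x^{-1} z^{-1} x z \cdot y^{-1} z^{-1} y z$. The natural move is to insert $z z^{-1}$ after $x$ to isolate $[x,z] = x^{-1} z^{-1} x z$:
\[
[xy,z] = y^{-1} \bigl( x^{-1} z^{-1} x z \bigr) z^{-1} y z = y^{-1} \, [x,z] \, z^{-1} y z.
\]
Now the hypothesis comes in: since $[x,z] \in H'$ and $y$ (hence $y^{-1}$) centralizes $H'$, we may pull $y^{-1}$ past $[x,z]$, giving
\[
[xy,z] = [x,z] \, y^{-1} z^{-1} y z = [x,z] \, [y,z],
\]
which is the asserted identity.

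For the power statement, I would argue by induction on $k \ge 0$, noting that any power $y^k$ also centralizes $H'$. The base case $k=0$ is trivial. For the inductive step, apply the first part with ``$x$'' replaced by $y$ and ``$y$'' replaced by $y^k$ (which centralizes $H'$):
\[
[y^{k+1}, z] = [y \cdot y^k, z] = [y,z]\,[y^k,z] = [y,z] \cdot [y,z]^k = [y,z]^{k+1}.
\]
Finally, for negative exponents, I would use $e = [e,z] = [y \cdot y^{-1}, z] = [y,z]\,[y^{-1},z]$, so $[y^{-1},z] = [y,z]^{-1}$, and then iterate (or invoke the positive case with $y$ replaced by $y^{-1}$) to conclude $[y^k,z] = [y,z]^k$ for every $k \in \ZZ$.

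There is no real obstacle here; the only subtle point is remembering that the centralization hypothesis is precisely what lets $y^{-1}$ slide past the commutator $[x,z] \in H'$. Everything else is mechanical.
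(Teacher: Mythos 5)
Your proof is correct and follows essentially the same route as the paper's: both arguments reduce to sliding $y^{\pm1}$ past the commutator $[x,z]\in H'$ using the centralization hypothesis (the paper phrases it via $(xy)^z=x^zy^z$, you expand the definition directly, but the computation is the same). Your explicit induction and the treatment of negative exponents correctly fill in the "Therefore" that the paper leaves implicit.
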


\begin{cor} \label{Divbyp} 
If $x,y \in G$, such that $y$ centralizes~$G'$, and $\ZZ_p \subseteq \langle [ x,  y] \rangle$, then $|y|$ is divisible by~$p$.\refnote[-7]{DivbypAid}
\end{cor}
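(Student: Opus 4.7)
The plan is to deduce the divisibility directly from the homomorphism property in \cref{Cents->Homo}. Since $y$ centralizes~$G'$, that lemma (with $H = G$) gives $[y^k,x] = [y,x]^k$ for every integer~$k$. Setting $k = |y|$ yields $[y,x]^{|y|} = [e,x] = e$, so the order of the element $[y,x]$ divides~$|y|$.

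Next I would observe that $\langle [x,y] \rangle = \langle [y,x] \rangle$ (since $[x,y] = [y,x]^{-1}$), so the hypothesis $\ZZ_p \subseteq \langle [x,y] \rangle$ tells us that the cyclic group $\langle [y,x] \rangle$ contains a subgroup of order~$p$. By Lagrange, $p$ divides $\bigl| [y,x] \bigr|$, and combining with the previous step shows $p \mid |y|$.

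The argument is essentially a one-liner, so there is no serious obstacle; the only subtlety is remembering to pass between $[x,y]$ and $[y,x]$ so that the exponentiation identity from \cref{Cents->Homo} applies with $y$ in the first slot (which is the slot required to centralize~$G'$).
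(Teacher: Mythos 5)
Your proof is correct. It rests on the same key fact as the paper's argument, namely \cref{Cents->Homo} in the form $[y^k,x]=[y,x]^k$, but you specialize it differently: you take $k=|y|$ directly in~$G$, conclude that the order of $[y,x]$ divides $|y|$, and combine this with the observation that $p$ divides $\bigl|\langle [y,x]\rangle\bigr| = \bigl|[y,x]\bigr|$ because $\ZZ_p\subseteq\langle [x,y]\rangle=\langle [y,x]\rangle$. The paper instead passes to $\widehat G = G/\ZZ_2$, takes $k=p$ (so that $[\widehat y^p,\widehat x]=[\widehat y,\widehat x]^p=\widehat e$ because $\widehat G'\iso\ZZ_p$), deduces $\widehat y^{\,p}\in Z(\widehat G)$, and derives a contradiction from $p\nmid|y|$. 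Your route is the more direct of the two: it avoids the quotient and the argument by contradiction, and it actually yields the slightly sharper conclusion that $\bigl|[x,y]\bigr|$ divides $|y|$. The one hygiene point you handled correctly is keeping $y$ in the first slot of the commutator so that the hypothesis that $y$ centralizes $G'$ is the one \cref{Cents->Homo} needs, and then translating back via $[x,y]=[y,x]^{-1}$.
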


\begin{cor} \label{Cent->Divides}
Let $S_0 \subseteq G$, such that $\ZZ_2 \nsubseteq \langle S_0 \rangle'$. 
If $g \in G$, such that $\ZZ_2 \subseteq \langle g, S_0\rangle'$, then $|\langle \quot{g} , \quot{S_0}\rangle:  \langle \quot{S_0}\rangle|$ is even.\refnote[-7]{Cent->DividesPf}
\end{cor}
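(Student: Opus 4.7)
The plan is to reduce the statement to a linear-algebra question about an alternating bilinear form over~$\ZZ_2$. First I would work modulo~$\ZZ_p$: since $\langle g, S_0\rangle'$ lies inside $G' = \ZZ_2 \times \ZZ_p$ and $\ZZ_2 \cap \ZZ_p = \{e\}$, the property $\ZZ_2 \subseteq \langle g, S_0\rangle'$ (and likewise $\ZZ_2 \nsubseteq \langle S_0\rangle'$) is unchanged when we pass to $\tilde G = G/\ZZ_p$; the index $|\langle\quot g,\quot{S_0}\rangle:\langle\quot{S_0}\rangle|$ is also unchanged, because $\tilde G/\tilde G' = G/G' = \quot G$. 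Moreover $\ZZ_2 \subseteq Z(G)$ (since $\ZZ_2$ is characteristic in the cyclic subgroup~$G'$ and $\Aut(\ZZ_2)$ is trivial), so after this reduction $\tilde G' = \ZZ_2 \subseteq Z(\tilde G)$.

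In this centralized situation, \cref{Cents->Homo} applied in each slot shows that the commutator is bi-multiplicative, and it visibly vanishes whenever one argument lies in $\tilde G'$. So it descends to an alternating bilinear pairing $\beta\colon \quot G \times \quot G \to \ZZ_2$. Set $A = \langle\quot g,\quot{S_0}\rangle$ and $B = \langle\quot{S_0}\rangle$. Since $\ZZ_2$ has prime order, the hypothesis $\ZZ_2 \nsubseteq \langle S_0\rangle'$ rephrases as ``$\beta \equiv 0$ on $B \times B$,'' while $\ZZ_2 \subseteq \langle g, S_0\rangle'$ rephrases as ``$\beta$ is nontrivial on $A \times A$.''

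To finish I argue by contraposition. Suppose $k = |A:B|$ is odd, so $\quot g^{\,k} \in B$. For any $b \in B$, bilinearity together with $\beta|_{B\times B} \equiv 0$ gives $k\,\beta(\quot g, b) = \beta(\quot g^{\,k}, b) = 0$, and since $k$ is odd modulo~$2$ this forces $\beta(\quot g, b) = 0$. Combining this with $\beta(\quot g, \quot g) = 0$ and $\beta|_{B\times B} \equiv 0$ and expanding bilinearly across the decomposition $A = B\langle\quot g\rangle$ yields $\beta \equiv 0$ on all of $A \times A$, contradicting the second hypothesis. Hence $k$ must be even. The main obstacle is really just the initial bookkeeping to set up the central reduction and verify that all hypotheses survive the quotient by~$\ZZ_p$; once the alternating form $\beta$ is available, the parity argument is essentially one line.
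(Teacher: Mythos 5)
Your proof is correct. It rests on the same two pillars as the paper's argument: passing to $G/\ZZ_p$, where the commutator subgroup becomes the central $\ZZ_2$, and invoking \cref{Cents->Homo} to make the commutator bimultiplicative so that it descends to a pairing on $\quot G$ with values in $\ZZ_2$. Where you diverge is in the final step. The paper argues directly: it extracts a single $s \in S_0$ with $[\widehat g, \widehat s]$ nontrivial in $G/\ZZ_p$ (possible because bilinearity forces the commutator subgroup of $\langle \widehat g, \widehat{S_0}\rangle$ to be generated by commutators of the generators, and those within $S_0$ all vanish), and then the homomorphism $x \mapsto [x,s]$ has kernel of index $2$ containing $\langle\quot{S_0}\rangle$ but not $\quot g$, which gives evenness of the index immediately. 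You instead argue by contraposition with the full alternating form, using the extra observation that if $k = |A:B|$ is odd then $\quot g^{\,k} \in B$, so $\beta(\quot g, b) = k\,\beta(\quot g, b) = \beta(\quot g^{\,k}, b) = 0$, whence $\beta$ vanishes on all of $A \times A$. Both are sound; the paper's version is slightly more economical (it never needs $\quot g^{\,k} \in B$ or the case expansion over $A = B\langle\quot g\rangle$), while yours makes the underlying bilinear-form picture explicit and packages the hypotheses cleanly as vanishing conditions on $B \times B$ versus $A \times A$.
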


In particular, if $\ZZ_2 \subseteq \langle [g,h] \rangle$, then, by taking $S_0 = \{h\}$, we see that $| \langle \quot g, \quot h \rangle : \langle \quot h \rangle|$ is even, so $|\quot g|$ is even (and, similarly, $|\quot h|$ must also be even).

\begin{cor} \label{DivBy4}
$|\quot G|$ is divisible by~$4$.
\end{cor}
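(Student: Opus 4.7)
The plan is to show that the Sylow $2$-subgroup of $\quot G$ is non-cyclic; this suffices, since any non-cyclic abelian $2$-group has order at least~$4$. I argue by contradiction, assuming that the Sylow $2$-subgroup of~$\quot G$ is cyclic (which includes the case where $|\quot G|$ is odd and this Sylow subgroup is trivial).

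Since $\ZZ_2 \subseteq G'$, I may choose a minimal subset $T \subseteq G$ with $\ZZ_2 \subseteq \langle T \rangle'$. Because cyclic groups have trivial commutator subgroup, $|T| \ge 2$. For each $t \in T$, minimality of~$T$ gives $\ZZ_2 \not\subseteq \langle T \sm \{t\} \rangle'$, so \cref{Cent->Divides}, applied with $g = t$ and $S_0 = T \sm \{t\}$, tells us that the index $\bigl[\langle \quot T \rangle : \langle \quot{T \sm \{t\}} \rangle\bigr]$ is even; in particular, $|\quot t|$ is even.

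Since the Sylow $2$-subgroup of~$\quot G$ is cyclic, the $2$-parts of all elements of~$\quot G$ lie in this common cyclic group and are totally ordered by divisibility. Choose $t_1 \in T$ whose $2$-part $u_1$ has the smallest order, and pick any other $t_2 \in T$. Writing $\quot{t_i} = u_i v_i$, with $u_i$ the $2$-part of~$\quot{t_i}$ and $v_i$ of odd order, the choice of $t_1$ forces $u_1 = u_2^m$ for some integer~$m$, so
$$ \quot{t_1} \, \quot{t_2}^{-m} = v_1 v_2^{-m} ,$$
which has odd order. Since $\quot{t_2} \in \langle \quot{T \sm \{t_1\}} \rangle$, this shows that $\quot{t_1}$ is congruent modulo $\langle \quot{T \sm \{t_1\}} \rangle$ to an element of odd order. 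Hence the index $\bigl[\langle \quot T \rangle : \langle \quot{T \sm \{t_1\}} \rangle\bigr]$ is odd, contradicting the preceding paragraph.

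The main delicate step is setting up the minimal subset~$T$ so that \cref{Cent->Divides} applies simultaneously to every element of~$T$; once that is done, the cyclic-Sylow hypothesis produces the contradiction via routine abelian group theory.
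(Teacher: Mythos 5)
Your argument is correct, but it takes a longer (and stronger) route than the paper, which treats the corollary as immediate from the sentence following \cref{Cent->Divides}: since the commutators generate $G'$, which surjects onto~$\ZZ_2$, one can pick $g,h \in G$ with $\ZZ_2 \subseteq \langle [g,h] \rangle$, and that remark then gives that both $|\quot h|$ and $|\langle \quot g, \quot h \rangle : \langle \quot h \rangle|$ are even, so $4$ divides $|\langle \quot g, \quot h \rangle|$ and hence $|\quot G|$. What you prove is genuinely more — that the Sylow $2$-subgroup of~$\quot G$ is non-cyclic — and each step checks out: minimality of~$T$ makes \cref{Cent->Divides} applicable at every $t \in T$, forcing each index $|\langle \quot T \rangle : \langle \quot{T \sm \{t\}} \rangle|$ to be even, while the cyclic-Sylow hypothesis lets you replace $\quot{t_1}$ by an odd-order element modulo $\langle \quot{T \sm \{t_1\}} \rangle$, so that the quotient (which the image of $\quot{t_1}$ generates) has odd order — a contradiction. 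Both arguments rest on the same key lemma, \cref{Cent->Divides}; the paper's version buys brevity by applying it once to a two-element set, while yours buys the additional structural fact about the Sylow $2$-subgroup, which the paper never needs.
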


\subsection{Connected sums}

 \begin{defn}[{}{\cite[Defn.~5.1]{GhaderpourMorris-Nilpotent}}]  \label{ConnectedSumDef}
Assume $C_1$ and~$C_2$ are two vertex-disjoint oriented cycles in $\Cay(\quot{G}; S)$, and let
	 $g \in G$,
	and
	$s,t \in S \cup S^{-1}$.
If 
		\begin{itemize}
		\item $C_1$ contains the oriented edge $[\quot{g}](t)$,
		and 
		\item $C_2$ contains the oriented edge $[\quot{gst}](t^{-1})$,
		\end{itemize}
	then we use $C_1 \connsum_t^s C_2$ to denote the oriented cycle obtained from $C_1 \cup C_2$ by
		\begin{itemize}
		\item removing the oriented edges $[\quot{g}](t)$ and $[\quot{gst}](t^{-1})$, and \refnote[8]{ConnSumAid}
		\item inserting the oriented edges $[\quot{g}](s)$ and $[\quot{gst}](s^{-1})$.
		\end{itemize}
	This is called the \emph{connected sum} of~$C_1$ and~$C_2$.
\end{defn}

If $[g](t)$ is any oriented edge of an oriented cycle~$C$, and $s \in S$, such that $sC$ is vertex disjoint from~$C$,  then we can form the connected sum $C \connsum_t^s -sC$. This construction can be iterated:

\begin{defn}
Suppose
	\begin{itemize}
	\item $[g_1](t_1), \ldots,[g_k](t_k)$ are oriented edges of an oriented cycle~$C$ in $\Cay(\quot G;S)$, such that $g_i \neq g_{i+1}$ for all~$i$,
	and
	\item $s_1,s_2,\ldots,s_k \in S \cup S^{-1}$, such that the cycles $C$, $s_1 C$, $s_2 s_1 C$,~\dots, $s_k s_{k-1} \cdots s_1 C$ are pairwise vertex-disjoint.
	\end{itemize}
Then we can form the connected sum 
	$$ C \ \connsum_{t_1}^{s_1} \ -s_1C \ \connsum_{t_2}^{s_2} \ s_2 s_1 C \ \connsum_{t_3}^{s_3} \  \cdots \ \connsum_{t_k}^{s_k} \  \pm s_k s_{k-1} \cdots s_1 C .$$
We call this a \emph{connected sum of signed translates of~$C$}.
\end{defn}

 \begin{lem}[{}{cf.\ \cite[Lem.~5.2]{GhaderpourMorris-Nilpotent}}] \label{VoltageOfConnSum}
If $C_1$, $C_2$, $g$, $s$, and~$t$ are as in \cref{ConnectedSumDef}, then
	$$ \voltage (C_1 \connsum_t^s C_2) = \voltage C_1 \cdot {}^g \! [s^{-1},t^{-1}] \cdot \voltage C_2 .$$
\end{lem}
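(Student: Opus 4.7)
The strategy is a direct calculation after choosing convenient base points for the two cycles. By \cref{BasicVoltage}\pref{BasicVoltage-welldef} I may take $C_1 = [\quot g](t, u_1, \ldots, u_m)$, so that its first edge is the distinguished edge $[\quot g](t)$, and $C_2 = [\quot{gst}](t^{-1}, v_1, \ldots, v_k)$, so that its first edge is the distinguished edge $[\quot{gst}](t^{-1})$. Writing $A = tu_1 \cdots u_m$ and $B = t^{-1}v_1 \cdots v_k$, the fact that each cycle closes up in $\Cay(\quot G;S)$ gives $A, B \in G'$, and by definition $\voltage C_1 = {}^g A$ and $\voltage C_2 = {}^{gst} B = {}^g(stBt^{-1}s^{-1})$.

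Next I would identify $C_1 \connsum_t^s C_2$ explicitly. Removing the edges $[\quot g](t)$ and $[\quot{gst}](t^{-1})$ leaves a $(u_i)$-path from $\quot{gt}$ to~$\quot g$ and a $(v_i)$-path from $\quot{gs}$ to $\quot{gst}$. Because $\quot G$ is abelian, the inserted edge $[\quot{gst}](s^{-1})$ terminates at $\quot{gsts^{-1}} = \quot{gt}$, so the four pieces splice into
\[ C_1 \connsum_t^s C_2 = [\quot g]\bigl(s,\, v_1,\ldots,v_k,\, s^{-1},\, u_1,\ldots,u_m\bigr), \]
whose voltage is
\[ \voltage(C_1 \connsum_t^s C_2) = {}^g\bigl(s \cdot tB \cdot s^{-1} \cdot t^{-1}A\bigr) = {}^g(stBs^{-1}t^{-1}A). \]

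It therefore suffices, after stripping the common ${}^g$, to establish the identity
\[ stBs^{-1}t^{-1}A \;=\; A \cdot sts^{-1}t^{-1} \cdot stBt^{-1}s^{-1} \]
in~$G$. The elements $A$, $[s^{-1},t^{-1}] = sts^{-1}t^{-1}$, $[s,t] = s^{-1}t^{-1}st$, and $stBt^{-1}s^{-1}$ all lie in the abelian normal subgroup~$G'$, so any two of them commute. Using this twice,
\[ sts^{-1}t^{-1} \cdot stBt^{-1}s^{-1} \;=\; st\,[s,t]\,Bt^{-1}s^{-1} \;=\; stB\,[s,t]\,t^{-1}s^{-1} \;=\; stBs^{-1}t^{-1}, \]
where the final step is the cancellation $[s,t]\,t^{-1}s^{-1} = s^{-1}t^{-1}$. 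Prepending~$A$ and commuting it past $stBs^{-1}t^{-1} \in G'$ yields the claim.

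The only real subtlety is bookkeeping for the conjugations and for the choice of base points; once those are pinned down, the verification collapses to the two commutations inside~$G'$ displayed above, so I anticipate no conceptual difficulty---this is essentially the same flavour of argument as the cited \cite[Lem.~5.2]{GhaderpourMorris-Nilpotent}.
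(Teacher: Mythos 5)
Your proof is correct and is essentially the same as the paper's: both write out the edge sequence of the spliced cycle explicitly and verify the identity by a direct computation using that $G'$ is abelian and normal. The only difference is cosmetic bookkeeping --- the paper first translates so that $g = t^{-1}$ and starts the spliced cycle at $\quot{gt}$, whereas you keep $g$ general and start at $\quot g$, which is just a cyclic rotation of the same cycle.
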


\begin{proof}
We may assume $g = t^{-1}$ (or, in other words, $gt = e$), after translating the cycles by~$(gt)^{-1}$ \fullccf{BasicVoltage}{translate}. 
Write $C_1 = (s_i)_{i=1}^m$ and $C_2 = [st^{-1}](t_j)_{j=1}^n$, so
	$$ (C_1 \connsum_t^s C_2) = \bigl( (s_i)_{i=1}^{m-1}, s, (t_j)_{j=1}^{n-1}, s^{-1} \bigr) .$$
By assumption, $C_1$ contains the edge $\quot {t^{-1}} \to \quot{e}$ and $C_2$ contains the edge $\quot s \to \quot{st^{-1}}$, so $s_m = t$ and $t_n = t^{-1}$. 
Therefore
	\begin{align*}
	\voltage (C_1 \connsum_t^s C_2) 
	&= \prod_{i=1}^{m-1} (s_i) \cdot s \cdot \prod_{j=1}^{n-1} (t_j) \cdot s^{-1} 
	\\&= \prod_{i=1}^m (s_i) \cdot t^{-1} s \cdot \prod_{j=1}^n (t_j) \cdot t s^{-1} 
	\\& = \voltage C_1 \cdot t^{-1} s  \cdot \left( \voltage C_2 \right)^{st^{-1}} \cdot t s^{-1}
	\\& = \voltage C_1 \cdot t^{-1} s t s^{-1} \cdot \voltage C_2
	\\&= \voltage C_1 \cdot {}^{t^{-1}} \! [s^{-1},t^{-1}] \cdot \voltage C_2
	\\&= \voltage C_1 \cdot {}^{g} \! [s^{-1},t^{-1}] \cdot \voltage C_2
	. \qedhere \end{align*}
\end{proof}

\begin{cor} \label{VoltageDiff}
Assume that $C_1$, $C_2$, $g$, $s$, and~$t$ are as in \cref{ConnectedSumDef}. If $C_0$ is another oriented cycle that is vertex-disjoint from~$C_2$ and contains the oriented edge $\quot{g}(t)$, then
	$$ \bigl( \voltage (C_0 \connsum_t^s C_2) \bigr) \bigl( \voltage (C_1 \connsum_t^s C_2) \bigr)^{-1} = (\voltage C_0) (\voltage C_1)^{-1} .$$
\end{cor}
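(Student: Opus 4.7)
The plan is to apply \cref{VoltageOfConnSum} twice and then cancel the common factors, using abelianness of~$N$.

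More precisely, I first note that the hypotheses of \cref{ConnectedSumDef} (and hence \cref{VoltageOfConnSum}) are satisfied by both $(C_0, C_2)$ and $(C_1, C_2)$: both $C_0$ and~$C_1$ contain the oriented edge $[\quot g](t)$, and vertex-disjointness from~$C_2$ is given for $C_0$ and part of the hypothesis for $C_1$. So \cref{VoltageOfConnSum} yields
\[
\voltage(C_0 \connsum_t^s C_2) = \voltage C_0 \cdot {}^g[s^{-1},t^{-1}] \cdot \voltage C_2,
\qquad
\voltage(C_1 \connsum_t^s C_2) = \voltage C_1 \cdot {}^g[s^{-1},t^{-1}] \cdot \voltage C_2.
\]

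Next I observe that all three voltages and the conjugate commutator ${}^g[s^{-1},t^{-1}]$ lie in the abelian normal subgroup~$N$ of \cref{VoltageDefn} (which is $G'$ in the present setting): voltages lie in~$N$ by definition, and $[s^{-1},t^{-1}] \in G' \subseteq N$ (so its conjugate also lies in~$N$, since $N$ is normal; in the current paper's setting $N = G'$ is even central on itself by being abelian). Therefore I may freely rearrange factors and invert. Multiplying the first identity by the inverse of the second and cancelling the common middle factor ${}^g[s^{-1},t^{-1}] \cdot \voltage C_2$ gives
\[
\bigl(\voltage(C_0 \connsum_t^s C_2)\bigr)\bigl(\voltage(C_1 \connsum_t^s C_2)\bigr)^{-1} = (\voltage C_0)(\voltage C_1)^{-1},
\]
which is the desired equality.

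There is no real obstacle here; the only thing worth flagging is the routine verification that every factor appearing in the product lies in the abelian group~$N$, so that the cancellation is valid. This is immediate from \cref{VoltageDefn,VoltageOfConnSum}.
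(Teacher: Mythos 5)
Your proof is correct and is exactly the argument the paper intends (the paper states this as an immediate corollary of \cref{VoltageOfConnSum} with no written proof): apply the lemma to both pairs $(C_0,C_2)$ and $(C_1,C_2)$ and cancel the common right-hand factor ${}^g[s^{-1},t^{-1}]\cdot\voltage C_2$. In fact the cancellation $(AX)(BX)^{-1}=AB^{-1}$ needs no commutativity at all, so your appeal to abelianness of~$N$, while harmless, is not even required.
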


 \begin{cor}[{}{\cite[Lem.~5.2]{GhaderpourMorris-Nilpotent}}] \label{VoltageOfConnSumModZp}
If $C_1$, $C_2$, $g$, $s$, and~$t$ are as in \cref{ConnectedSumDef}, then
	$$ \voltage (C_1 \connsum_t^s C_2) \equiv \voltage C_1 \cdot \voltage C_2 \cdot [s,t] \pmod{\ZZ_p}  .$$
\end{cor}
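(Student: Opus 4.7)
The plan is to begin with the explicit formula from \cref{VoltageOfConnSum}, namely
$$\voltage(C_1 \connsum_t^s C_2) = \voltage C_1 \cdot {}^g[s^{-1}, t^{-1}] \cdot \voltage C_2,$$
and reduce each side modulo $\ZZ_p$. The key structural observation is that although $G'$ itself need not be central in~$G$, its image $G'/\ZZ_p \cong \ZZ_2$ does lie in $Z(G/\ZZ_p)$, because $\ZZ_2 \subseteq Z(G)$ (the same fact used, for instance, in the proof of \cref{Z2inFrattini}). Thus modulo $\ZZ_p$, the commutator subgroup of $G/\ZZ_p$ sits inside the center.

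From this I would extract three consequences. First, since $[s^{-1},t^{-1}] \in G'$, its image modulo $\ZZ_p$ is central in $G/\ZZ_p$, so conjugation by~$g$ acts trivially on it; hence ${}^g[s^{-1},t^{-1}] \equiv [s^{-1},t^{-1}] \pmod{\ZZ_p}$. Second, because commutators in $G/\ZZ_p$ all land in the center, \cref{Cents->Homo} applied in $G/\ZZ_p$ shows that the commutator map is bilinear modulo $\ZZ_p$; a short computation using $[x^{-1},y] \equiv [x,y]^{-1}$ and $[x,y^{-1}] \equiv [x,y]^{-1}$ then yields $[s^{-1},t^{-1}] \equiv [s,t] \pmod{\ZZ_p}$. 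Third, $\voltage C_2 \in G'$, so its image modulo $\ZZ_p$ is also central and therefore commutes with $[s,t]$.

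Assembling these three reductions gives
$$\voltage(C_1 \connsum_t^s C_2) \equiv \voltage C_1 \cdot [s,t] \cdot \voltage C_2 \equiv \voltage C_1 \cdot \voltage C_2 \cdot [s,t] \pmod{\ZZ_p},$$
which is the stated conclusion. I do not expect a genuine obstacle here: this is a mechanical simplification of \cref{VoltageOfConnSum}, and the only substantive step is noticing at the outset that $G'/\ZZ_p$ lies in $Z(G/\ZZ_p)$ — a single observation that simultaneously kills the conjugation by~$g$, forces the commutator $[s^{-1},t^{-1}]$ to agree with $[s,t]$, and lets $\voltage C_2$ commute past~$[s,t]$.
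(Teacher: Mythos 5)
Your proposal is correct and follows exactly the route the paper intends: the corollary is stated without proof as an immediate consequence of \cref{VoltageOfConnSum}, obtained by reducing that formula modulo~$\ZZ_p$ and using the centrality of $\ZZ_2$ (equivalently of $G'/\ZZ_p$ in $G/\ZZ_p$) to drop the conjugation by~$g$, identify $[s^{-1},t^{-1}]$ with $[s,t]$, and commute $[s,t]$ past $\voltage C_2$. No gap.
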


The following result describes a fairly common situation in which the connected sum provides hamiltonian cycles in $\Cay(G;S)$:

\begin{lem} \label{UsualConnSum}
Let $S_0$ be a nonempty subset of~$S$, $g \in G$, $c \in S \sm S_0$, and $s,t \in S \sm \{c\}$.
Assume $C_0$ and~$C_1$ are oriented hamiltonian cycles in $\Cay \bigl( \langle \quot {S_0} \rangle ; S_0 \bigr)$, such that 
	\begin{itemize}
	\item $(\voltage C_0)^{-1} (\voltage C_1)$ is a nontrivial element of\/~$\ZZ_p$, 
	\item $C_0$ and $C_1$ both contain the oriented edge $[\quot g](s)$,
	\item for every $x \in S_0$, $C_0$ contains at least two edges that are labelled either $x$ or~$x^{-1}$, 
	\item $\ZZ_2 \subseteq \langle [c,t] \rangle$,
		and
	\item either $|\quot{G} : \langle \quot{S_0} \rangle| > 2$ or $s = t$.
	\end{itemize}
If either
	\begin{enumerate}
	\item \label{UsualConnSum-notZ2}
	there exists $u \in S \sm \{c\}$, such that $\ZZ_2 \nsubseteq \langle [u,c] \rangle$,
	or
	\item \label{UsualConnSum-even}
	$|\quot{G} : \langle \quot{S_0}, \quot t \rangle|$ is even,
	\end{enumerate}
then there is a hamiltonian cycle~$C$ in $\Cay(\quot G; S)$, such that $\langle \voltage C \rangle = G'$, so \cref{FGL} yields a hamiltonian cycle in $\Cay(G;S)$.
\end{lem}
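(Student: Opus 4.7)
The plan is to build a hamilton cycle $C$ in $\Cay(\quot G;S)$ as an iterated connected sum of translates of $C_0$, and then produce a second cycle $C'$ from $C$ by swapping one translate of $C_0$ for the corresponding translate of $C_1$. By \cref{VoltageDiff} the voltages of $C$ and $C'$ have the same $\ZZ_2$-component and differ in the $\ZZ_p$-component by the nontrivial element $(\voltage C_0)^{-1}\voltage C_1$, so at least one of them has nontrivial $\ZZ_p$-component. It therefore suffices to arrange matters so that the common $\ZZ_2$-component of $\voltage C$ is nontrivial; then the voltage of the better of the two cycles generates $G'=\ZZ_2\times\ZZ_p$, and \cref{FGL} produces a hamilton cycle in $\Cay(G;S)$.

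Set $K=\langle\quot{S_0}\rangle$ and $m=|\quot G:K|$. Since $\quot G/K$ is an abelian group generated by the image of $S\sm S_0$, I choose a hamilton path $(\quot{s_i})_{i=1}^{m-1}$ in $\Cay(\quot G/K;S\sm S_0)$ starting at $\quot e$, so that the cosets $K,s_1K,s_2s_1K,\ldots$ are pairwise distinct. The hypothesis that each $x\in S_0$ labels at least two edges of $C_0$ guarantees that at every step an appropriate $t_i$-edge of the current cycle is available, so the iterated connected sum
$$C \;=\; C_0\connsum_{t_1}^{s_1}-s_1C_0\connsum_{t_2}^{s_2}s_2s_1C_0\connsum_{t_3}^{s_3}\cdots\connsum_{t_{m-1}}^{s_{m-1}}\pm s_{m-1}\cdots s_1C_0$$
is a hamilton cycle in $\Cay(\quot G;S)$. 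Applying \cref{VoltageOfConnSumModZp} iteratively, and using that $\ZZ_2\subseteq Z(G)$ acts trivially under conjugation, the $\ZZ_2$-component of $\voltage C$ equals $m\cdot z+\sum_{i=1}^{m-1}[s_i,t_i]\in\ZZ_2$, where $z\in\ZZ_2$ is the $\ZZ_2$-part of $\voltage C_0$. The task is to choose the $(s_i,t_i)$ so that this sum equals $1$.

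Because $\ZZ_2\subseteq\langle[c,t]\rangle$, any step with $s_i=c$ performed on a $t$-labelled edge contributes $1\in\ZZ_2$. In case~(2), evenness of $|\quot G:\langle\quot{S_0},\quot t\rangle|$ allows a \emph{comb-style} hamilton path on $\quot G/K$: traverse each coset of $\langle\quot{S_0},\quot t\rangle/K$ by repeated $\quot t$-steps and jump between those cosets using $c$, which lets the parity of $c$-on-$t$ connections be adjusted to make the sum odd. In case~(1), the generator $u$ satisfies $[u,c]\in\ZZ_p$, so it contributes $0$ to $\ZZ_2$; I use this to perform a local detour in the hamilton path on $\quot G/K$ that trades a $c$-jump for a $u$-jump (or incorporates both in a small rectangle), flipping the count of $c$-on-$t$ connections without introducing any new $\ZZ_2$-contribution. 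The edge case $m=2$ is handled separately: the hypothesis $s=t$ forces the unique connecting edge $[\quot g](s)$ to carry label~$t$, so taking $s_1=c$ directly supplies the desired $[c,t]$. The main obstacle is this parity design, in particular simultaneously ensuring that the chosen hamilton path on $\quot G/K$ admits the required comb or local-detour structure and that every $t_i$-edge needed for a connected sum is still present after earlier steps have removed some edges of the prior translates.
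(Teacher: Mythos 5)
Your skeleton --- iterated connected sums of translates of $C_0$, with one translate swapped for $C_1$ so that \cref{VoltageDiff} gives a free generator of~$\ZZ_p$, reducing everything to making the $\ZZ_2$-component of $\voltage C$ nontrivial --- is exactly the paper's strategy, and your setup and $m=2$ observations are fine. But the $\ZZ_2$-parity step is where all the content of the lemma lives, and both of your mechanisms for it have genuine gaps. In case~\pref{UsualConnSum-notZ2}, ``trading a $c$-jump for a $u$-jump'' in the hamiltonian path on $\quot G/K$ is not in general a legal move: replacing one edge label changes the endpoint of the path (so it is no longer a hamiltonian path unless $\quot u$ and $\quot c$ agree modulo~$K$), and if $u \in S_0$ there is no $u$-jump at all. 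The device that actually works keeps the quotient path fixed with $s_1 = c$ and instead toggles the \emph{label} $t_1 \in \{t,u\}$ of the edge of the cycle used for that single connection; since $[c,t]$ and $[c,u]$ have different $\ZZ_2$-projections, one of the two choices makes the total nontrivial. This in turn requires the cycle being connected to contain $u$-labelled edges, which forces a case split on whether $u \in S_0$ (if not, one must first build an intermediate cycle by connect-summing $u$-translates of~$C_0$).

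In case~\pref{UsualConnSum-even} your argument does not close either: adjusting only the parity of the number of ``$c$-on-$t$'' connections cannot control the total $\ZZ_2$-sum, because the remaining connections contribute terms $[s_i,t_i]$ whose $\ZZ_2$-projections you have said nothing about. What saves this case is that one may assume case~\pref{UsualConnSum-notZ2} fails not only for~$c$ but for every $d \in S \sm S_0$ in the role of~$c$ (otherwise one is back in the first case); then \emph{every} connection contributes the nontrivial element of~$\ZZ_2$, and the evenness of $|\quot{G} : \langle \quot{S_0}, \quot t \rangle|$ is used only to guarantee that the number of connections, $|\quot G : \langle \quot{S_0} \rangle| - 1$, is odd, so these contributions do not cancel. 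You never invoke the failure of~\pref{UsualConnSum-notZ2} inside case~\pref{UsualConnSum-even}, and without it the parity cannot be forced. Finally, your construction tacitly assumes the cycles being connected have $t$-labelled edges; when $t \notin S_0$ this requires a preliminary reduction (connect-summing $t$-translates of $C_0$ and $C_1$ and replacing $S_0$ by $S_0 \cup \{t\}$), which is also missing.
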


\begin{proof}
Let $r = |\quot{G} : \langle \quot{S_0} \rangle|$. We have $\ZZ_p \subseteq \langle (\voltage C_0)^{-1} (\voltage C_1) \rangle \subseteq \langle S_0 \rangle$, so \cref{Slessamin} implies $r \neq 1$.

Suppose $r = 2$. By assumption, this implies $s = t$, which means that $C_0$ and~$C_1$ both contain the oriented edge $[\quot g](t)$. Then the translate $c \,C_0$ contains the oriented edge $[\quot {gc}](t)$. The connected sums $C = C_0 \connsum_t^{c} -c \,C_0$ and $C' = C_1 \connsum_t^{c} -c \,C_0$ are hamiltonian cycles in $\Cay(\quot{G}; S)$. From \cref{VoltageOfConnSumModZp}, we have 
	$$ \voltage C \equiv  \voltage C_0 \cdot \voltage C_0 \cdot  [c,t] \equiv [c,t] \not\equiv 0 \pmod{\ZZ_p}, $$
so $\voltage C$ projects nontrivially to~$\ZZ_2$. \Cref{VoltageDiff} says $(\voltage C)(\voltage C')^{-1} = (\voltage C_0)(\voltage C_1)^{-1}$, which generates~$\ZZ_p$ (because it is conjugate to the inverse of $(\voltage C_0)^{-1}(\voltage C_1)$, which is assumed to be a nontrivial element  of~$\ZZ_p$). Therefore, we see that either $\voltage C$ or $\voltage C'$ generates $G'$, as desired. So we may assume henceforth that $r > 2$.

We now show that we may assume $t \in S_0$. To this end, suppose it is not the case that $t \in S_0$. Let $n = |\langle \quot{S_0}, \quot t \rangle : \langle \quot{S_0} \rangle |$. Then, by choosing a sequence $\{[g_i](s_i)\}_{i=1}^{n-1}$ of oriented edges of~$C_0$, we can form a connected sum~$C_0'$ of signed translates of~$C_0$:
	$$ C_0' = C_0 \connsum^t_{s_1} -tC_0 \connsum^t_{s_2}  \cdots \connsum^t_{s_{n-1}} \pm t^{n-1} C_0 .$$
This is a hamiltonian cycle in $\Cay \bigl( \langle \quot{S_0}, \quot t \rangle; S_0 \cup \{t\} \bigr)$. 
We may assume $s_1 = s$. Then another hamiltonian cycle $C_1'$ can be constructed by replacing the leftmost occurrence of~$C_0$ with $C_1$, and \cref{VoltageOfConnSum} tells us that $(\voltage C_0') (\voltage C_1')^{-1} = (\voltage C_0) (\voltage C_1)^{-1}$, which is a nontrivial element of~$\ZZ_p$ (and $(\voltage C_0)^{-1} (\voltage C_1)$ is conjugate to the inverse of this). 
From the definition of connected sum, it is obvious that $C_0'$ contains at least two edges labelled $t^{\pm1}$. So the hamiltonian cycles $C_0'$ and $C_1'$ satisfy the hypotheses of the \lcnamecref{UsualConnSum} with $S_0 \cup \{t\}$ in the role of~$S_0$ and with $t$ in the role of~$s$.\refnote{UsualConnSumWitht}

\setcounter{ProofCase}{0}

\begin{ProofCase} \label{UsualConnSumnotZ2Case}
Assume there exists $u \in S \sm \{c\}$, such that $\ZZ_2 \nsubseteq \langle [u,c] \rangle$.
\end{ProofCase}

\begin{ProofSubcase} \label{UsualConnSumnotZ2Case-inS0}
Assume $u \in S_0$.
\end{ProofSubcase}
Fix a hamiltonian path $(s_i)_{i=1}^n$ in $\Cay(\quot{G}/\langle \quot{S_0} \rangle; S \sm S_0)$ with $s_1 = c$, and let $\pi_i = \prod_{j=1}^i s_j$. Any connected sum $C_0 \connsum_{t_1}^{s_1} (-\pi_1C_0) \connsum_{t_2}^{s_2} \cdots \connsum_{t_n}^{s_n} (\pm\pi_n C_0)$ is a hamiltonian cycle~$C$ in $\Cay(\quot{G}; S)$. 

Since $[t,c]$ and $[u,c]$ do not have the same projection to~$\ZZ_2$, the voltages of $C_0 \connsum_t^c -\pi_1C_0$ and $C_0 \connsum_u^c -\pi_1C_0$ do not have the same projection to~$\ZZ_2$. Therefore, by choosing $t_1$ to be the appropriate element of $\{t,u\}$, we may assume the projection of $\voltage C$ to~$\ZZ_2$ is nontrivial \csee{VoltageOfConnSumModZp}. Note also that if $|\quot{G} : \langle \quot{S_0} \rangle| = 2$, then we must have $t_1 = t$.\refnote{t1=t}

We may assume that $t_n = s$, and that the connected sum $(-1)^{n-1} \pi_{n-1}C_0 \connsum_s^{s_n} (-1)^n\pi_n C_0$ is relative to the oriented edge $[\quot {\pi_n g}](s)$ of $\pi_n C_0$ that is also in~$\pi_n C_1$.\refnote{Cans}
Therefore, another hamiltonian cycle $C'$ can be constructed by replacing $\pi_n C_0$ with $\pi_n C_1$ in the connected sum. Then \cref{VoltageOfConnSum} (together with \fullcref{BasicVoltage}{translate}) implies that $(\voltage C)^{-1} (\voltage C')$ is conjugate to $(\voltage C_0)^{-1} (\voltage C_1)$, which is a generator of~$\ZZ_p$. Therefore, either $\voltage C$ or $\voltage C'$ generates $G'$, as desired. 

\begin{ProofSubcase}
Assume $u \notin S_0$.
\end{ProofSubcase}
Let $S_u =  \{u\} \cup S_0$, let $n = |\langle \quot {S_u} \rangle : \langle \quot {S_0} \rangle| - 1$, let $(s_i)_{i=1}^m$ be a hamiltonian path in $\Cay \bigl( \quot{G}/\langle \quot{S_u} \rangle; S \sm S_u \bigr)$ with $s_1 = c$, and let $\pi_i = \prod_{j=1}^i s_j$. (Since $S \sm S_0$ is an irredundant generating set for $\quot G/\langle \quot {S_0} \rangle$, we have $m,n \ge 1$.) Any connected sum 
	$$C_u = C_0 \connsum_{t_1}^u -uC_0 \connsum_{t_2}^u \cdots \connsum_{t_{n}}^u \pm u^{n} C_0$$
 is a hamiltonian cycle in $\Cay \bigl( \langle \quot {S_u} \rangle ; S_u \bigr)$, so any connected sum 
	$$ C = C_u \connsum_{t_1'}^{s_1} -\pi_1C_u \connsum_{t_2'}^{s_2} \cdots \connsum_{t_m'}^{s_m} \pm\pi_m C_u$$
is a hamiltonian cycle in $\Cay(\quot{G}; S)$. 

Since $t \in S_0$, we know that $C_0$ contains more than one edge labeled~$t^{\pm1}$, so $-u C_0$ has an edge labeled~$t^{\pm1}$ that was not removed in the construction of the connected sum $C_0 \connsum_{t_1}^{u} -\pi_1C_0$. Furthermore, the definition of the connected sum implies that $C_0 \connsum_{t_1}^{u} -\pi_1C_0$ also contains an edge labeled~$u$. Therefore, we may form connected sums
	$$ \text{$C_u \connsum_{t^{\pm1}}^{c} -\pi_1C_u$ 
	\ and \  
	$C_u \connsum_u^{c} -\pi_1C_u$} $$
without removing any of the edges of~$C_u$.
Since $[c,t]$ and $[c,u]$ do not have the same projection to~$\ZZ_2$, the voltages of these two connected sums do not have the same projection to~$\ZZ_2$ \csee{VoltageOfConnSumModZp}. Therefore, by choosing $t_1'$ to be the appropriate element of $\{t^{\pm1}, u\}$, we may assume the projection of $\voltage C$ to~$\ZZ_2$ is nontrivial.

We have
	$$C = C_u \connsum_{t_1'}^{s_1} -\pi_1C_u \connsum_{t_2'}^{s_2} \cdots \connsum_{t_{m-1}'}^{s_{m-1}} \pm\pi_{m-1} C_u \connsum_{t_m'}^{s_m}\bigl(  \pm
	\pi_m C_0 \connsum_{t_1}^u \pm \pi_m uC_0 \connsum_{t_2}^u \cdots \connsum_{t_{n}}^u \pm \pi_m u^{n} C_0 \bigr) ,$$
so the proof can be completed almost exactly as in the final paragraph of \cref{UsualConnSumnotZ2Case-inS0} (by constructing another connected sum in which $\pi_m u^{n} C_0$ is replaced with $\pi_m u^{n} C_1$).

\begin{ProofCase} \label{AllucNontriv}
Assume $[u,c]$ projects nontrivially to~$\ZZ_2$, for every $u \in S \sm \{c\}$.
\end{ProofCase}
In particular, $[d,c]$ projects nontrivially to~$\ZZ_2$, for every $d \in S \sm \bigl( S_0 \cup \{c\} \bigr)$. Since we may assume that \cref{UsualConnSumnotZ2Case} does not apply with~$d$ in the place of~$c$, we conclude that we may assume\refnote[15]{udNontriv}
	\begin{align} \label{AllNontrivZ2}
	 \text{$[u,d]$ projects nontrivially to $\ZZ_2$, for all $d \in S \sm S_0$ and $u \in S \sm \{d\}$} 
	 . \end{align}
Choose a hamiltonian path $(s_i)_{i=1}^n$ in $\Cay(\quot{G}/\langle \quot{S_0} \rangle; S \sm S_0)$. Any connected sum 
	$$C = C_0 \connsum_{t_1}^{s_1} -\pi_1C_0 \connsum_{t_2}^{s_2} \cdots \connsum_{t_n}^{s_n} \pm\pi_n C_0$$
is a hamiltonian cycle in $\Cay(\quot{G}; S)$. Calculating modulo $\ZZ_p$, and letting $z$~be the nontrivial element of~$\ZZ_2$, we have
	\begin{align*}
	\voltage C
	&\equiv \voltage C_0 \cdot [s_1,t_1] \cdot \voltage (-\pi_1C_0) 
	\ \cdots \ 
	[s_n,t_n] \cdot \voltage (\pm\pi_n C_0)
		&& \text{(\cref{VoltageOfConnSumModZp})}
	\\&\equiv \voltage C_0 \cdot z \cdot \voltage C_0 \cdots z \cdot \voltage C_0
		&& \text{(\fullcref{BasicVoltage}{translate} and \pref{AllNontrivZ2})}
	\\&= (\voltage C_0)^{n+1} \cdot z^n 
	\\&\equiv z
		&& \text{($n$ is odd)}
	.\end{align*}

The proof is now completed exactly as in the final paragraph of \cref{UsualConnSumnotZ2Case-inS0}.
\end{proof}

\begin{cor} \label{UsualConnSumCor}
Let $S_0 \subseteq S$, $g \in G$, and $s \in S_0$.
Assume $C_0$ and~$C_1$ are oriented hamiltonian cycles in $\Cay \bigl( \langle \quot {S_0} \rangle ; S_0 \bigr)$, such that 
	\begin{itemize}
	\item $(\voltage C_0)^{-1} (\voltage C_1)$ is a nontrivial element of\/~$\ZZ_p$, 
	\item $C_0$ and $C_1$ both contain the oriented edge $[\quot g](s)$,
	\item for every $x \in S_0$, $C_0$ contains at least two edges that are labelled either $x$ or~$x^{-1}$, 
	and
	\item $\ZZ_2 \nsubseteq \langle S_0 \rangle'$.
	\end{itemize}
Then there is a hamiltonian cycle~$C$ in $\Cay(\quot G; S)$, such that $\langle \voltage C \rangle = G'$, so \cref{FGL} yields a hamiltonian cycle in $\Cay(G;S)$.
\end{cor}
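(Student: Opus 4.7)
My plan is to reduce \cref{UsualConnSumCor} to \cref{UsualConnSum} by choosing $c \in S \sm S_0$ and $t \in S \sm \{c\}$ with $\ZZ_2 \subseteq \langle [c,t] \rangle$ and then verifying the remaining hypotheses of that lemma.

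First I would observe that $\ZZ_2$, as a characteristic subgroup of order~$2$ of the normal abelian~$G'$, lies in~$Z(G)$. By \cref{Cents->Homo}, the map $\omega(\quot x, \quot y) := \pi([x,y])$, where $\pi : G' \to \ZZ_2$ is the natural projection, descends to a well-defined alternating bilinear form on $\quot G \times \quot G$. The hypothesis $\ZZ_2 \nsubseteq \langle S_0 \rangle'$ says $\omega$ vanishes on $\langle \quot{S_0}\rangle \times \langle \quot{S_0}\rangle$, whereas $\ZZ_2 \subseteq G'$ forces $\omega \not\equiv 0$ on $\quot G \times \quot G$. Bilinearity then yields $x, y \in S$, not both in~$S_0$, with $\omega(\quot x, \quot y) \neq 0$. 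I would take $c \in \{x,y\} \cap (S \sm S_0)$ and $t$ to be the other element, preferring $t \in S_0$ and $t = s$ whenever those choices are available.

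Next I would address the index pre-condition of \cref{UsualConnSum}. Since $(\voltage C_0)^{-1}(\voltage C_1) \in \ZZ_p$ is nontrivial and both voltages lie in $\langle S_0 \rangle$, we have $\ZZ_p \subseteq \langle S_0 \rangle$; combined with $\langle S_0 \rangle' \neq G'$, this forces $S_0 \subsetneq S$, and \cref{Slessamin} gives $|\quot G : \langle \quot{S_0}\rangle| \geq 2$. If this index equals~$2$, then for any $c' \in S \sm S_0$ the inclusion $\ZZ_p \subseteq \langle S_0, c' \rangle$ together with \cref{Z2inFrattini} yields $\langle S_0, c' \rangle = G$, and the irredundancy of~$S$ then forces $|S \sm S_0| = 1$, so the element~$c$ chosen above is unique.

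Finally, for conditions~(1) and~(2) of \cref{UsualConnSum}: if some $u \in S \sm \{c\}$ satisfies $\omega(\quot u, \quot c) = 0$, condition~(1) is immediate. Otherwise $\omega(\cdot, \quot c) \neq 0$ on every element of $S \sm \{c\}$; in particular $\omega(\quot s, \quot c) \neq 0$ (so $t = s$ is an available choice, which takes care of the pre-condition in the small-index case) and $\ZZ_2 \subseteq \langle c, S_0 \rangle'$. \Cref{Cent->Divides} then makes $|\langle \quot c, \quot{S_0}\rangle : \langle \quot{S_0}\rangle|$ even, whence $|\quot G : \langle \quot{S_0}\rangle|$ is even, and the choice $t \in S_0$ delivers condition~(2) via $\langle \quot{S_0}, \quot t\rangle = \langle \quot{S_0}\rangle$. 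The hard part will be simultaneously reconciling the pre-condition ($r > 2$ or $s = t$) with the correct choice of~$t$ in the borderline regime $|\quot G : \langle \quot{S_0}\rangle| = 2$; the key observation is that if condition~(1) fails then $\omega(\quot s, \quot c)$ is automatically nonzero, so the choice $t = s$ is precisely forced in the situation where it is needed, and \cref{UsualConnSum} then produces the desired hamiltonian cycle.
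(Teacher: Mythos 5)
Your overall strategy—reducing to \cref{UsualConnSum} by producing $c \in S \sm S_0$ and $t$ with $\ZZ_2 \subseteq \langle [c,t]\rangle$—coincides with the first step of the paper's proof, and your bilinear-form setup, together with the observation that $|\quot G : \langle \quot{S_0}\rangle| = 2$ forces $S = S_0 \cup \{c\}$, is sound. The genuine gap is in your treatment of the standing hypothesis ``either $|\quot{G} : \langle \quot{S_0} \rangle| > 2$ or $s = t$'' of \cref{UsualConnSum}. That hypothesis is required in \emph{every} application of the lemma (its proof disposes of the index-$2$ case at the very start, and only the equality $s=t$ lets it do so); it is not tied to which of alternatives (1) and (2) you invoke. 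Your ``key observation'' shows only that $t=s$ is \emph{available} when alternative (1) fails, whereas $t=s$ is \emph{needed} whenever the index is $2$. Concretely, if $S = S_0 \cup \{c\}$ with $|\quot G : \langle\quot{S_0}\rangle| = 2$, $[c,s] \in \ZZ_p$, and $[c,t_0]\notin\ZZ_p$ for some $t_0 \in S_0 \sm \{s\}$, then alternative (1) holds (with $u = s$), but every admissible $t$ lies in $S_0 \sm\{s\}$, so the precondition fails for every choice and the lemma cannot be invoked at all; your argument offers no way out. (The paper's own one-line first step is equally terse at this point, but your explicit claim to have resolved the ``hard part'' is incorrect as stated.) A second, repairable loose end: when every $\ZZ_2$-detecting commutator $[c,d]$ has both entries outside $S_0$, you take $t = d \notin S_0$ and rely on alternative (1); there the witness $u=s$ does exist, but you never check the precondition in that branch. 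It does hold, because $c \neq d$ gives $\#(S\sm S_0)\ge 2$, and your own index-$2$ observation then forces $|\quot G : \langle\quot{S_0}\rangle| > 2$—but this needs to be said.

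The paper avoids the second difficulty entirely by a device your proposal is missing. After reducing to the case $[c,t]\in\ZZ_p$ for all $c \in S$ and $t \in S_0$, it does not apply \cref{UsualConnSum} to the original pair $(C_0,C_1)$ with a $t$ outside $S_0$. Instead it chooses $c,d \in S$ with $[c,d]\notin\ZZ_p$ (necessarily both outside $S_0$), enlarges $S_0$ to $S_0^+ = S_0 \cup \{d\}$, and manufactures new hamiltonian cycles on $\langle\quot{S_0^+}\rangle$ as connected sums $C = C_0 \connsum_{s}^d -dC_0 \connsum \cdots$ of $d$-translates of $C_0$ (and $C'$ with the leftmost copy replaced by $C_1$, using the shared edge $[\quot g](s)$), so that $C$ and $C'$ share an inserted edge labelled $d$ and $(\voltage C)(\voltage C')^{-1}=(\voltage C_0)(\voltage C_1)^{-1}$ by \cref{VoltageDiff}. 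It then applies \fullcref{UsualConnSum}{even} with $s=t=d \in S_0^+$, so the troublesome precondition is satisfied trivially and alternative (2) follows from \cref{Cent->Divides}. Because $s=t$ is arranged by construction rather than hoped for, this enlargement is what makes the reduction to \cref{UsualConnSum} go through cleanly, and it is the main idea of the paper's proof that your proposal does not reproduce.
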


\begin{proof}
We may assume $[c,t] \in \ZZ_p$, for all $c \in S$ and $t \in S_0$. 
(Otherwise, we see from \cref{Cent->Divides} that \fullcref{UsualConnSum}{even} applies.)
Choose $c,d \in S$, such that $[c,d] \notin \ZZ_p$, let $S_0^+ = S_0 \cup \{d\}$, and let $r = |\langle S_0^+ \rangle : \langle S_0 \rangle|$. Any connected sum of the following form is a hamiltonian cycle in $\Cay \bigl( \langle S_0^+ \rangle; S_0^+ \bigr)$:
	$$ C = C_0 \connsum_{s_1}^d -dC_0 \connsum_{s_2}^d \cdots \connsum_{s_{r-1}}^d \pm d^{r-1}C_0 .$$
We may assume $s_1 = s$, and that the connected sum $C_0 \connsum_{s_1}^d -dC_0$ is formed by using the oriented edge $[\quot g](s)$ that is also in~$C_1$. Therefore, a second hamiltonian cycle $C'$ can be constructed by replacing the leftmost occurrence of~$C_0$ with~$C_1$. Then \cref{Cent->Divides} implies that \fullcref{UsualConnSum}{even} applies (with $S_0^+$, $d$, $d$, $C$, and~$C'$ in the roles of $S_0$, $s$, $t$, $C_0$, and~$C_1$, respectively).
\end{proof}

\section{Case with \texorpdfstring{$\quot s = \quot t$}{sG' = tG'}}

\begin{CASE} \label{s=t}
Assume there exist $s,t \in S \cup S^{-1}$ with $\quot s = \quot t$ and $s \neq t$.
\end{CASE}

\begin{proof}
Write $t = s \gamma$ with $\gamma \in G'$. We may assume $\langle \gamma \rangle = G'$, for otherwise $|\gamma|$ is prime, so \cref{DoubleEdge} applies with $N = \langle \gamma \rangle$. Note that the irredundance of~$S$ implies $\langle S \sm \{s\} \rangle$ and $\langle S \sm \{t\} \rangle$ do not contain~$\ZZ_p$. This implies that every element of $S \sm \{s, t\}$ centralizes~$\ZZ_p$.\refnote{CentralizeOrNot} 
So $s$ and~$t$ do not centralize~$\ZZ_p$.\refnote{sNotCentralize}

Let $m = |\quot{t}|$ and $n = |\quot G|/m$. 

\begin{subcase}
Assume $|\quot t| > 2$.
\end{subcase}
Since $\quot G$ is abelian, it is easy to find a hamiltonian cycle $C = (t_i)_{i=1}^{mn}$ in $\Cay \bigl( \quot G ; S \sm \{s\} \bigr)$, such that $t_1 = t_2 = \cdots = t_{m-1} = t$.\refnote{Startt} Since $\langle \voltage C \rangle \subseteq \langle S \sm \{s\} \rangle$, and $\ZZ_p \nsubseteq \langle S \sm \{s\} \rangle$, we must have $\voltage C \in \ZZ_2$. 

For each subset~$I$ of $\{1,\ldots,m-1\}$, we define $C_I$ to be the hamiltonian cycle constructed from~$C$ by changing $t_i$ to~$s$ for all $i \in I$. The proof is completed by noting that $I$ may be chosen such that $\voltage C_I$ generates~$G'$, so \cref{FGL} applies:
	\begin{itemize}
	\item If $\voltage C = e$, let $I = \{1\}$.
	\item If $\voltage C$ is the nontrivial element of~$\ZZ_2$, and $t$ does not invert~$\ZZ_p$, then we may let $I = \{1,2\}$.
	\item If $\voltage C$ is the nontrivial element of~$\ZZ_2$, and $t$~inverts~$\ZZ_p$, then $|\quot t|$ is even, so we must have $|\quot t| \ge 4$. We may let $I = \{1,3\}$. 
	\end{itemize}

\begin{subcase}
Assume $|\quot t| = 2$.
\end{subcase}
(Since $t$ does not centralize $\ZZ_p$, this implies that $t$ inverts~$\ZZ_p$.)
%
%
Choose a hamiltonian cycle $(s_i)_{i=1}^n$ in $\Cay \bigl( \quot{G}/\langle \quot t \rangle ; S \sm \{s,t\}  \bigr)$, and let
	$$C_0 =  ( t,  s_i )_{i=1}^n = (t_j)_{j=1}^{2n} .$$
Since $n = |\quot G|/2$ is even \csee{DivBy4} and $S \sm \{s\}$ is an irredundant generating set\refnote{MinGenSet} of~$\quot G$, it is easy to see that $C_0$ is a hamiltonian cycle in $\Cay\bigl(\quot G ; S \sm \{s\} \bigr)$.\refnote{t=2HamCyc} Note that $t_i = t$ whenever $i$~is odd, and that $\voltage C_0 \in \ZZ_2$ (because $\ZZ_p \nsubseteq \langle S \sm \{s\} \rangle$).\refnote{VoltC0inZ2}

We may assume $n \ge 6$ (for otherwise $|G| = 4np \le 20p$, so \cref{|G|small} applies).
We construct a hamiltonian cycle $C_1$ from~$C_0$:
	\begin{itemize}
	\item If $\voltage C_0 = e$, construct $C_1$ by changing $t_1$ to~$s$.
	\item If $\voltage C_0 \neq e$, 
	construct $C_1$ by changing both~$t_1$ and~$t_5$ to~$s$.
	\end{itemize}
In each case, $\voltage C_1$ generates~$G'$. (To see this in the second case, note that $t_2 t_3 t_4 t_5 = s_1 t s_2 t$ centralizes~$G'$, because $t$~inverts~$G'$, and each $s_i$ centralizes~$G'$.) Therefore, \cref{FGL} applies.
\end{proof}

\section{Cases with \texorpdfstring{$|\quot a| > 2$ and $\quot b \notin \langle \quot a \rangle$}{|aG'| > 2 and bG' not in <aG'>}} \label{a>2+bNotinaSect}

Recall that the elements $a$ and~$b$ of~$S$ satisfy \pref{aDefn} and~\pref{bDefn}.

\begin{CASE} \label{generic(aS)=G}
Assume $|\quot a| > 2$, $\quot b \notin \langle \quot a  \rangle$, and there exists $c \in S$, such that $\ZZ_2 \subseteq \langle [a,c] \rangle$.
\textup(It may be the case that $b = c$.\textup)
\end{CASE}

\begin{proof}
Let $m = |\quot{a}|$ and $n = | \quot{G}: \langle \quot{a} \rangle|$. Since $\quot b, \quot c \notin \langle \quot a \rangle$ (and $\quot G / \langle \quot a \rangle$ is abelian), it is easy to find a hamiltonian cycle $(s_i)_{i=1}^n$ in $\Cay \bigl( \quot G / \langle \quot a \rangle; S \sm \{a\} \bigr)$, such that $s_n \in \{c^{\pm 1}\}$, and $s_k = b$ for some $k < n$. 
Since $\ZZ_2 \subseteq \langle [a,c] \rangle$, we know $m$ and~$n$ are both even \csee{Cent->Divides}. Since $n$~is even, we have the following (well-known) hamiltonian cycle~$C_0$
in $\Cay(\quot{G}; \quot{S})$:\refnote[21]{WellKnownCycle}
	\begin{align} \label{C0Even}
	 C_0 = \bigl( a, ( a^{m-2}, s_{2i-1}, a^{-(m-2)},  s_{2i} )_{i=1}^{n/2}\#, a^{-1}, (s_{n-j}^{-1})_{j=1} ^{n-1} \bigr) 
	 . \end{align}

Letting $\widehat G = G/\ZZ_p$, we have $\widehat G' = \ZZ_2$, so $\widehat a^{m-2} \in Z(\widehat G\,)$\refnote{amCent}
 (because $m$~is even). Therefore 
	$$a^{m-2} s_{2i-1} a^{-(m-2)} \equiv s_{2i-1} \quad \pmod{\ZZ_p} ,$$
so, calculating modulo~$\ZZ_p$, we have
	$$ \voltage C_0
	\equiv  a \cdot \left( \prod_{i=1}^{n-1}  s_j \right) \cdot  a^{-1} \cdot \left( \prod_{i=1}^{n-1}  s_j \right)^{-1} 
	\equiv a \cdot s_n^{-1} \cdot a^{-1} \cdot s_n
	= [ a^{-1} , s_n]
	= [ a^{-1} , c^{\pm1}]
	, $$
which is nontrivial (mod~$\ZZ_p$).

Recall that $s_k = b$.
Let $g = \prod_{i=1}^{k-1} s_i$ and $\delta = (-1)^{k+1}$. Then $C_0$ contains both the oriented edge $[\quot{gb}](b^{-1})$ and the oriented path $[\quot{ga^{-2\delta}}](a^\delta, b, a^{-\delta})$.
So \cref{StandardAlteration} (with $s = a^\delta$, $t = b$, $u = a^\delta$ and $h = g$)
provides a hamiltonian cycle~$C_1$, such that $(\voltage C_0)^{-1} (\voltage C_1)$ is conjugate to $[b^{-1}, a^\delta] [a^\delta, b^{-1}]^{a^\delta}$. Since $a$ centralizes~$\ZZ_2$, but not~$\ZZ_p$, this voltage is a generator of~$\ZZ_p$.\refnote{generic(aS)=GEndpt}

Thus, either $\voltage C_0$ or $\voltage C_1$ generates $\ZZ_2 \times \ZZ_p = G'$, so \cref{FGL} provides a hamiltonian cycle in $\Cay(G;S)$.
\end{proof}

\begin{CASE} \label{generic(aS)neqG}
Assume $|\quot a| > 2$, $\quot b \notin \langle \quot a \rangle$, and there does not exist $c \in S$, such that $\ZZ_2 \subseteq \langle [a,c] \rangle$.
\end{CASE}

\begin{proof} 
Choose $c,d \in S$ with $\ZZ_2 \subseteq \langle [c,d] \rangle$. 
Let 
	$$ \text{$m = | \quot a |$, \ $n = | \langle \quot S \sm \{\quot d\} \rangle | / m$, \ and \ $r = |\quot G|/(mn)$} .$$
By assumption, we know $a \notin \{c,d\}$. Also, we may assume $d \neq b$ (after interchanging $c$ and~$d$ if necessary). Then \cref{Slessamin} tells us $r > 1$.
Furthermore, we see from \cref{Cent->Divides} that the image of~$c$ in $\quot G/ \langle \quot a \rangle$ has even order,\refnote{cEven}
 so $n$~is even.

\begin{subcase} \label{generic(aS)neqG-n>2}
Assume $n > 2$.
\end{subcase} 
It is not difficult to construct a hamiltonian cycle $(s_i)_{i=1}^n$ in $\Cay \bigl( \langle \quot S \sm \{\quot d\} \rangle / \langle \quot a \rangle ; \quot S \sm \{\quot a , \quot d\} \bigr)$, such that $s_1 = b$ and $s_k = c^{\pm1}$\refnote{sk=c}
 for some $k \notin \{1,n\}$. Then, since $n$ is even, we may define $C_0$ as in \pref{C0Even}, so $C_0$ is a hamiltonian cycle in $\Cay \bigl( \langle \quot S \sm \{\quot d \} \rangle ; S \sm \{ d \} \bigr)$.

Let $g = s_1 s_2 \cdots s_k$, and note that $C_0$ contains the oriented edges $[\quot e](a)$ and $[\quot g](c^{\mp1})$. Since $\ZZ_2 \subseteq \langle [c,d] \rangle$, but $\ZZ_2 \nsubseteq \langle [a,d] \rangle$, we see from \cref{VoltageOfConnSum} that there is a connected sum
	$$ C = C_0 \connsum_{t_1}^d -dC_0 \connsum_{t_2}^d \cdots \connsum_{t_{r-1}}^d \pm d^{r-1}C_0 , $$
with $t_1 \in \{a,c^{\pm1}\}$, such that $\ZZ_2 \subseteq \langle \voltage C \rangle$. Note that $C$ is a hamiltonian cycle in $\Cay(\quot{G}; S)$.

The cycle $C_0$ contains both $[\quot b](b^{-1})$ and $[\quot a^{-2}](a,b,a^{-1})$, and neither of these paths contains either the edge $[\quot e](a)$ or the edge $[\quot g](c^{\mp1})$. Therefore, $C$ also contains both of these paths, so \cref{StandardAlteration} (with $s = a$, $t = b$, $u = a$, and $h = e$)
provides a hamiltonian cycle~$C'$ in $\Cay( \quot G; S)$, such that $\bigl( \voltage C \bigr)^{-1} \bigl( \voltage C ' \bigr)$ is a conjugate of $[b^{-1}, a] \, [a, b^{-1}]^a$, which  is a generator of~$\ZZ_p$ (since $a$ centralizes~$\ZZ_2$, but not~$\ZZ_p$).
Then either $\voltage C$ or $\voltage C'$ generates~$G'$, so \cref{FGL} applies.

\begin{subcase} \label{noc+n=2+r>2}
Assume $n = 2$ and $r > 2$.
\end{subcase}
Since $n = 2$ (and $\quot b \notin \langle \quot a \rangle$), we have $\langle \quot a, \quot b, \quot d \rangle = \quot G$, so \cref{Slessamin} implies $S = \{a,b,d\}$. (Therefore $b = c$, which means $\ZZ_2 \subseteq \langle [b,d] \rangle$.)
We have the following hamiltonian cycle in $\Cay \bigl( \langle \quot a , \quot b \rangle; \quot a, \quot b \,\bigr)$:
	$$ C_0 = [\quot e] ( a^{m-1}, b, a^{-(m-1)}, b^{-1} ) .$$
Using the oriented edge $[\quot e](a)$, we can form the connected sum $C_0 \connsum_a^d -d C_0$. Then, since $dC_0$ contains both $[\quot{db}](b^{-1})$ and $[\quot{dab}](a^{-1})$, we can extend this to a connected sum
	$$ C = C_0 \connsum_{a}^d -dC_0 \connsum_{t_2}^d \cdots \connsum_{t_{r-1}}^d \pm d^{r-1}C_0 , $$
with $t_2 \in \{a,b\}$, such that $\ZZ_2 \subseteq \langle \voltage C \rangle$ \csee{VoltageOfConnSumModZp}.
Since $C$ contains both $[\quot b](b^{-1})$ and $[\quot a^{-2}](a,b,a^{-1})$, we may argue as in the last paragraph 
of \cref{generic(aS)neqG-n>2}. Namely, \cref{StandardAlteration} (with $s = a$, $t = b$, $u = a$, and $h = e$) provides a hamiltonian cycle~$C'$ in $\Cay( \quot G; S)$, such that $\bigl( \voltage C \bigr)^{-1} \bigl( \voltage C ' \bigr)$ is a conjugate of $[b^{-1}, a] \, [a, b^{-1}]^a$, which  is a generator of~$\ZZ_p$.
Then either $\voltage C$ or $\voltage C'$ generates~$G'$, so \cref{FGL} applies.

\begin{subcase}
Assume $n = r = 2$.
\end{subcase}
As in \cref{{noc+n=2+r>2}}, we must have $S = \{a,b,d\}$ and $b = c$ (so $\ZZ_2 \subseteq \langle [b,d] \rangle$).

\begin{subsubcase}
Assume $m \neq 3$. 
\end{subsubcase}
Since $m = |\quot a| > 2$ (by an assumption of this \lcnamecref{generic(aS)neqG}), we have $m \ge 4$.
We have the following hamiltonian cycle in $\Cay( \quot G; S)$:\refnote[15]{n=r=2aid}
	$$ C_0 = \bigl( d, b, a, b^{-1}, d^{-1}, a^{m-2}, d, a^{-(m-3)}, b, a^{m-3}, d^{-1}, a^{-(m-1)}, b^{-1} ) .$$ 
Since $a$ is central in $G/\ZZ_p$ (by an assumption of this \lcnamecref{generic(aS)neqG}), we know that 
	$$ \voltage C_0 \equiv d b b^{-1} d^{-1} d b d^{-1} b^{-1}
	= d b d^{-1} b^{-1} = [d^{-1},b^{-1}] \equiv [d,b] = [d,c] \  \pmod{\ZZ_p} ,$$
so $\ZZ_2 \subseteq \langle \voltage C_0 \rangle$.

Note that $C_0$ contains both $[\quot{dab}](b^{-1})$ and $[\quot{d}\quot{a}^3](a^{-1}, b, a)$ (because $m \ge 4$), so applying \cref{StandardAlteration} (with $s = a^{-1}$, $t = b$, $u = a^{-1}$ and $h = da$) yields a hamiltonian cycle $C_1$ in $\Cay(G;S)$, such that $\bigl( \voltage C_0 \bigr)^{-1} \bigl( \voltage C_1 \bigr)$ is a conjugate of $[b^{-1}, a^{-1}] \, [a^{-1}, b^{-1}]^{a^{-1}}$, which  is a generator of~$\ZZ_p$.
Then either $\voltage C$ or $\voltage C'$ generates~$G'$, so \cref{FGL} applies.

\begin{subsubcase} \label{m=3notcentralize}
Assume $m = 3$ and $d$~does not centralize $G'$. 
\end{subsubcase}
%
Since the walk $(a^{-2}, b^{-1}, a^2)$ is a hamiltonian path in $\Cay \bigl(  \langle \quot a, \quot b \rangle; a,b \bigr)$, we have the following hamiltonian cycle in $\Cay( \quot G; S)$:
	$$ C = ( a^{-2}, b^{-1}, a^2,  d^{-1}, a^{-2}, b, a^2,  d) .$$
Note that
	$$ \voltage C
	= (a^{-2} b^{-1} a^2) \,  d^{-1} (a^{-2} b a^2) \,  d
 	= (b^{a^2})^{-1} \,  d^{-1} (b^{a^2}) \,  d
	= [b^{a^2}, d] .$$
Since $a^2$ does not invert~$G'$, we know that $b^{a^2} \not\equiv b^{a^{-2}} \pmod{\ZZ_2}$.\refnote{ba2}
Therefore, since $d$~does not centralize~$G'$, we may assume $[b^{a^2}, d] \not\equiv e \pmod{\ZZ_2}$\refnote{ba2dnote} 
(by replacing $a$ with its inverse if necessary). Also, since $G'$ is central modulo~$\ZZ_p$, we have $[b^{a^2}, d] \equiv [b,d] \not\equiv e \pmod{\ZZ_p}$. Therefore, $\voltage C$ generates~$G'$, so \cref{FGL} applies.

\begin{subsubcase}
Assume $m = 3$ and $d$ centralizes $G'$. 
\end{subsubcase}
%
Suppose $[b,d] \in \ZZ_2$.  Let $\widehat G = G/\ZZ_2$ and $\widehat H = \langle \widehat a, \widehat b \rangle$. From \cref{G'=p}, we know there is a hamiltonian cycle in $\Cay( \widehat H; a , b \bigr)$. Deleting an edge labeled $b^{\pm1}$ (and passing to the reverse and/or a translate if necessary) yields a hamiltonian path $L = (t^i)_{i=1}^{2mp-1}$ in $\Cay( \widehat H ; a , b \bigr)$ from~$\widehat e$ to~$\widehat b$. 
Let
	$$ C = (L^{-1}, d^{-1}, L, d) .$$
Then
	$$ \voltage C 
	= \bigl[ \,\prod\nolimits_{i=1}^{2mp-1} t_i, d]
	\in [ b \ZZ_2, d]
	= \{[b,d]\}
	, $$
because $\ZZ_2$ is in the center of~$G$. 
Since $[b,d] \in \ZZ_2$, this calculation implies that $C$ is a closed walk in $G/\ZZ_2 = \widehat G$. So $C$ is a hamiltonian cycle in $\Cay( \widehat G; S)$.
The calculation also implies that \cref{FGL} applies, because $\langle [b,d] \rangle = \ZZ_2$.

We may now assume $[b,d] \notin \ZZ_2$. Therefore, since $d$ centralizes~$G'$, and $p^2 \nmid 12 = |\quot G|$, we see from \cref{Cents->Homo} that $b$~does not centralize~$G'$.\refnote{relprime}
Also, we may assume $[a,d] \neq e$, for otherwise \cref{Durnberger-commuting} applies with $s = d$ and $t = a$. However, we know $\ZZ_2 \nsubseteq \langle [a,d] \rangle$ (by an assumption of this \lcnamecref{generic(aS)neqG}). Therefore $\langle [a,d] \rangle = \ZZ_p$. So \cref{m=3notcentralize} applies after interchanging $b$ and~$d$.
\end{proof}

\section{Cases with \texorpdfstring{$\quot b \in \langle \quot a \rangle$}{bG' in <aG'>}}

\begin{CASE} \label{binaNotinvert}
Assume $\quot b \in \langle \quot a \rangle$ and $a$~does not invert~$G'$.
\end{CASE}

\begin{proof}
Let $m = |\quot a|$.
We may assume (perhaps after replacing $b$ with its inverse) that we may write $b = a^k \gamma$ with $1 \le k \le m/2$ and $\gamma \in G'$. Assume $k \ge 2$, for otherwise \cref{{s=t}} applies.
This implies $m - 1 \ge k + 1$ (since $m = |\quot a|\ge 2k \ge k + 2$).

\begin{subcase} \label{binaNotinvert-c}
Assume there exists $c \in S$, such that $\ZZ_2 \subseteq \langle [a,c] \rangle$.
\end{subcase}
Let $n = | \quot{G}: \langle \quot{a} \rangle|$. 
Note that \cref{Cent->Divides} implies $m$ and~$n$ are even, and $c \notin \langle \quot a \rangle$ (so $c \neq b$).

Choose a hamiltonian cycle $(s_i)_{i=1}^n$ in $\Cay \bigl( \quot G / \langle \quot a \rangle ; S \sm \{a,b\} \bigr)$, such that $s_n = c$, and define $C_0$ as in \pref{C0Even}. Then $\langle \voltage C_0 \rangle$ contains~$\ZZ_2$ by the same calculation as in \cref{generic(aS)=G}.

Since $m - 1 \ge k + 1$, we may construct a hamiltonian cycle $C_1$ in $\Cay(\quot G ; S)$ by replacing the path $(a^{k+1})$ at the start of~$C_0$ with $(b, a^{-(k-1)} , b)$. Then
	$$ \bigl( \voltage C_1 \bigr)  \bigl( \voltage C_0 \bigr) ^{-1}
	=  b a^{-(k-1)} b a^{-(k+1)}
= (a^k \gamma) a^{-(k-1)} (a^k \gamma) a^{-(k+1)}
= a^{k+1} \gamma^a \, \gamma a^{-(k+1)}
. $$
This is a generator of~$\ZZ_p$, since $a$~inverts~$\ZZ_2$, but not~$\ZZ_p$. 
Hence, either $\voltage C_0$ or $\voltage C_1$ generates~$G'$, so \cref{FGL} provides a hamiltonian cycle in $\Cay(G;S)$.

\begin{subcase} \label{binaNotinvert-Noc}
Assume there does not exist $c \in S$, such that $\ZZ_2 \subseteq \langle [a,c] \rangle$.
\end{subcase}
Choose $c,d \in S$, such that $\ZZ_2 \subseteq \langle [c,d] \rangle$. (It is possible that $b \in \{c,d\}$, but we know, by the assumption of this \lcnamecref{binaNotinvert-Noc}, that $a \notin \{c,d\}$.)
Let $n = | \langle \quot a, \quot d \rangle: \langle \quot{a} \rangle|$ and $r = | \quot G|/(mn)$.
From \cref{Cent->Divides} (and the assumption of this \lcnamecref{binaNotinvert-Noc}),
we know $n$ and~$r$ are even.

We have the following hamiltonian cycle in $\Cay \bigl(  \langle \quot a, \quot d \rangle ; a,d \bigr)$:%
\refnote[16]{binaNotinvert-Noc-aid}
	$$ C_0 =  \bigl( (a, (a^{m-2}, d, a^{-(m-2)}, d)^{n/2}\#, a^{-1}, d^{-(n-1)} \bigr) .$$
As in the final paragraph of \cref{binaNotinvert-c}, another hamiltonian cycle $C_1$ can be constructed by replacing the path $(a^{k+1})$ at the start of~$C_0$ with $(b, a^{-(k-1)} , b)$, and the calculation in \cref{binaNotinvert-c} shows that $ ( \voltage C_1 ) (\voltage C_0 )^{-1}$ generates~$\ZZ_p$. Therefore, since $[c,d] \notin \ZZ_p$, but $[c,a] \in \ZZ_p$, we see that \fullcref{UsualConnSum}{notZ2} applies (with $S_0 = \{a,b,d\}$, $g = a^{-1}$, $s = t = d$, and $u = a$).
\end{proof}

\begin{CASE}
Assume $\quot b \in \langle \quot a \rangle$ and $a$~inverts~$G'$.
\end{CASE}

\begin{proof}
As in \cref{binaNotinvert}, we let $m = |\quot a|$ and write $b = a^k \gamma$ with $2 \le k \le m/2$ and $\gamma \in G'$.
We now consider the same five subcases as in \cite[pp.~60--62]{Durnberger-semiprod}.

\begin{subcase}
Assume $2 < k < m/2$ and $k$ is even.
\end{subcase}
Let $C_1 = (a^m)$. The proof in the last paragraph of \cite[p.~60]{Durnberger-semiprod} provides a hamiltonian cycle
	$$C_0 = \bigl( b, a^{-(k-4)}, b, a^{m-2k-2}, b, a^{-1}, b, a^2, b^{-2}, a^{k-3} \bigr) $$
in $\Cay \bigl( \langle \quot a \rangle ; a,b \bigr)$, such that $(\voltage C_0)^{-1} (\voltage C_1)$ is a generator of~$\ZZ_p$.\refnote{Durnberger60}
Therefore, \cref{UsualConnSumCor} applies (with $S_0 = \{a,b\}$), because $C_0$ and~$C_1$ both contain the oriented edge $[\quot a^{-1}](a)$.

\begin{subcase}
Assume $2 < k < m/2$ and $k$ is odd.
\end{subcase}
Let
	\begin{align*}
	 C_0 &= \bigl( (b,a,b^{-1},a)^{(k-1)/2}, b, a^{m-2k+1} \bigr) \\
\intertext{and}
	C_1 &= \bigl(  (b, a^{-1}, b^{-1}, a^{-1})^{(k-1)/2}, b^2, a^{m-2k-1} , b \bigr)
	. \end{align*}
Calculations in \cite[p.~61]{Durnberger-semiprod} show that $(\voltage C_0)^{-1} (\voltage C_1)$ is a generator of~$\ZZ_p$. Therefore, \cref{UsualConnSumCor} applies (with $S_0 = \{a,b\}$), because $C_0$ and~$C_1$ both contain the oriented edge $[\quot e](b)$.

\begin{subcase}
Assume $k = m/2$ and $k$ is even.
\end{subcase}
We follow the argument of \cite[Subcase~iii, p.~97]{KeatingWitte}. Since $k$~is even, we know $a^k$ centralizes~$G'$, so 
	$$b^2 = (a^k \gamma)^2 = a^{2k} \gamma^2 = a^{m} \gamma^2 \in \ZZ_2 \cdot \gamma^2 \not\ni e .$$
Therefore \cref{DoubleEdge} applies (with $s = b$ and $t = b^{-1}$).

\begin{subcase}
Assume $k = m/2$ and $k$ is odd.
\end{subcase}
Choose $c \in S$ so that $\ZZ_2 \subseteq \langle [a,c] \rangle$, if such $c$~exists. Otherwise, choose $c$ so that there exists $d \in S$, such that $\ZZ_2 \subseteq \langle [c,d] \rangle$. In either case,
\cref{Cent->Divides} implies $c \in S \sm \{a,b\}$, and $|\langle \quot a, \quot c \rangle : \langle \quot a \rangle|$ is even.

We may assume $b^2 = e$, for otherwise \cref{DoubleEdge} applies (with $s = b$ and $t = b^{-1}$). Therefore, noting that $a^k$ inverts~$G'$ (since $k$ is odd), we have
	$$ e = b^2  = (a^k \gamma)(a^k \gamma) = a^{2k} \cdot \gamma^{-1} \gamma = a^m  .$$

\begin{subsubcase}
Assume $|\quot G : \langle \quot a \rangle| > 2$.
\end{subsubcase}
It suffices to find a hamiltonian cycle $C_*$ in $\Cay( \quot G; S )$, such that $\voltage C_*$ projects nontrivially to~$\ZZ_2$, and $C_*$ contains the paths $[\quot{a^{k-3}}](a,b,a^{-1})$ and  $[\quot{a^{k-1} b}](b^{-1})$. For then \cref{StandardAlteration} (with $s = a$, $t = b$, $u = a$, and $h = a^{k-1}$) provides a hamiltonian cycle $C_*'$, such that $\langle (\voltage C_*)^{-1}(\voltage C_*') \rangle = \ZZ_p$. Therefore, either $\voltage C_*$ or $\voltage C_*'$ generates~$G'$, so \cref{FGL} applies.

Note that \refnote[14]{Ga>2}
	$$ C = ( a^{k-2}, b, a^{-(k-2)}, c, a^{k-1}, c^{-1}, b^{-1}, c, a^{-(k-1)}, c^{-1} ) $$
is a cycle through the vertices of $\Cay( \quot G; \{a,b,c\} )$ in $\langle \quot a \rangle \cup c \langle \quot a \rangle$.
 A connected sum of translates of~$C$ yields a hamiltonian cycle~$C_0$ in $\Cay( \quot G; S )$.

If $\ZZ_2 \nsubseteq \langle [ a,  c] \rangle$, then the connected sum defining~$C_0$ can be chosen so that $\ZZ_2 \subseteq \langle \voltage C_0 \rangle$ (see the proof of \cref{UsualConnSum}). So we may let $C_* = C_0$.

We may now assume $\ZZ_2 \subseteq \langle [ a,  c] \rangle$. Construct a hamiltonian cycle $C_1$ in $\Cay( \quot G; S )$ by replacing the rightmost translate of~$C$ in the connected sum with\refnote[15]{Ga>2replace}
	$$C' = (a^{k-1}, b, a^{-(k-1)}, c, a^{k-1},  b^{-1}, a^{-(k-1)}, c^{-1}) .$$
A straightforward calculation shows that $(\voltage C)^{-1}(\voltage C') \notin \ZZ_p$,\refnote{Ga>2voltage}
 so we have $\ZZ_2 \subseteq \langle \voltage C_i \rangle$ for some $i \in \{0,1\}$.
Let $C_* = C_i$.

\begin{subsubassumps}
We may now assume $|\quot G : \langle \quot a \rangle| = 2$, so the irredundance of~$S$ implies $S = \{a,b,c\}$. Since $\quot b \in \langle \quot a \rangle$, the irredundance of~$S$ also implies $\langle [a,c] \rangle = \ZZ_2$.\refnote{acisZ2}
Furthermore, we may also assume that $c$ either centralizes~$G'$ or inverts~$G'$. (Otherwise, a preceding case applies after interchanging $a$ with~$c$.)\refnote{acInterchange}
\end{subsubassumps}

\begin{subsubcase}
Assume $c$ inverts~$G'$. 
\end{subsubcase}
Let
	$$ \text{$L = 
	\begin{cases}
	(a,b)^k\#
	& \text{if $p \mid k$} \\
	 (b,a)^k\#
	& \text{if $p \nmid k$} 
	  \end{cases} $
	\qquad and \qquad
	$ C = (L^{-1}, c^{-1}, L, c )$.} $$
Then $L$ is a hamiltonian path in $\Cay \bigl( \langle \quot a \rangle; a,b \bigr)$,\refnote{abkham}
 so $C$ is a hamiltonian cycle in $\Cay(\quot G; S)$. Since $(ab)^k = \gamma^k$,\refnote{abk=gammak}
  we have\refnote[17]{voltage(abk)}
\makeatletter\@nobreaktrue\makeatother 
	$$ \voltage L = 
	\begin{cases}
	\gamma^k b^{-1} = \gamma^{k-1} a^{-k}
	& \text{if $p \mid k$} , \\
	\gamma^{-k} a^{-1}
	& \text{if $p \nmid k$} 
	 . \end{cases} $$
Thus, in either case, we have $\voltage L = \gamma^y a^z$, where $p \nmid y$ and $z$ is odd, so
	\begin{align*}
	 \voltage C &= (\voltage L)^{-1} c^{-1} (\voltage L) c = [\voltage L,c] = [\gamma^y a^z, c] 
	 \\&= [\gamma^y, c]^{a^z} \cdot [a^z, c] = (\gamma^{-2y})^{a^z} \cdot [a, c]^z = \gamma^{2y} \cdot [a,c] 
	 . \end{align*}
This generates $G'$, so \cref{FGL} applies.

\begin{subsubcase}
Assume $c$ centralizes~$G'$ and $k \ge 5$.
\end{subsubcase}
Let 
	$$C_0 = (L, c, L^{-1}, c^{-1} ) ,$$
where $L = (b,a)^k\#$. Since $C_0$ contains both $[\quot e](b,a,b)$ and $[\quot a \quot b \quot c] (a^{-1})$, and also contains both $[\quot a^2](b,a,b)$ and  $[\quot a^3 \quot b \quot c] (a^{-1})$ we can apply \cref{StandardAlteration} twice (first with $s = b$, $t = a$, $u = c$, and $h = bc$, and then with $s = b$, $t = a$, $u = c$, and $h = a^2 bc$), to obtain a hamiltonian cycle $C_2$, such that\refnote[15]{k>5voltage}
	$$ (\voltage C_0)^{-1}(\voltage C_2)
	= [a^{-1},b]^2 
	,$$
which generates~$\ZZ_p$.
Then, since \refnote[16]{k>5voltageC0}
	$$ \voltage C_0 
	= \bigl( (ba)^k a^{-1} \bigr)  c  \bigl( (ba)^k a^{-1} \bigr)^{-1} c^{-1}
	= [a,c] $$
is a generator of~$\ZZ_2$, we conclude that $\voltage C_2$ generates~$G'$, so \cref{FGL} applies.

\begin{subsubcase}
Assume $c$ centralizes~$G'$ and $k = 3$.
\end{subsubcase}
Assume, for the moment, that $\gamma \notin \ZZ_p$. Let\refnote[15]{k=3notZp}
	$$ C = (c, b, c^{-1}, a, b^{-1}, c, b, a, b^{-1}, c^{-1}, b, a ) .$$
Then $C$ is a hamiltonian cycle in $\Cay(\quot G; S)$, and a straightforward calculation shows that $ \voltage C = ba^3 = \gamma^{-1}$\refnote{voltage(k=3notZp)} generates~$G'$, so \cref{FGL} applies.

Now, suppose that $p \ge 5$, and, because of the preceding paragraph, 
that $\gamma \in \ZZ_p$. Let\refnote[13]{p>5aid}
		$$ C = (b, a, b^{-1}, a, b, c, a^{-5}, c^{-1} ) 
		 .$$
		Then $C$ is a hamiltonian cycle in $\Cay(\quot G; S)$ and\refnote[16]{p>5voltage}
			$$ \voltage C 
			= b a b^{-1} a b c a c^{-1} 
			= b a b^{-1} a b a [a,c^{-1}] 
			= \gamma^{-3} [a,c] . $$
		Therefore $\langle \voltage C \rangle = G'$ (since $p \neq 3$ and $\gamma$~projects trivially to~$\ZZ_2$), so \cref{FGL} applies.

We may now assume $p = 3$ (so $|G| = 72$), and that $\gamma \in \ZZ_p$. Let $\widehat G = G/\ZZ_p$. We have the following hamiltonian cycle in $\Cay(\widehat G;S)$: \refnote[15]{p=3aid}
	\begin{align*}
	C = (a^{2},c,a^{5},c^{-1},a^{-2},b,a^{2},c,a^{-5},c^{-1},a^{-2},b ) 
	. \end{align*}
Calculating modulo $\ZZ_2$ (so $c$ is in the center), we have
	$$ \voltage C
	= a^{2}ca^{5}c^{-1}a^{-2}ba^{2}ca^{-5}c^{-1}a^{-2}b
	\equiv a^{2}a^{5}a^{-2}ba^{2}a^{-5}a^{-2}b
	= a^{-1}bab
	= [a,b]
	= \gamma^2
	. $$
This is nontrivial (mod~$\ZZ_2$), so $\voltage C$ must be nontrivial. Therefore $\voltage C$ generates $\ZZ_p$, so \cref{FGL} applies.

\begin{subcase}
Assume $k = 2 < m/2$.
\end{subcase}

\begin{subsubcase}
Assume $|\quot{G} : \langle \quot a \rangle| > 2$.
\end{subsubcase}
Note that\refnote[15]{k=2aid}
	$$ C =  \bigl( b, a, b^{-1}, c, b, a^{-1}, b, c^{-1}, (a, c, a, c^{-1})^{(m-4)/2} \bigr) $$
is a cycle through the vertices of $\Cay( \quot G; \{a,b,c\} )$ in $\langle \quot a \rangle \cup c \langle \quot a \rangle$. A connected sum of translates of~$C$ yields a hamiltonian cycle~$C_0$ in $\Cay( \quot G; S )$. Since $k$ is even, we know that $\ZZ_2 \nsubseteq \langle [ b,  c] \rangle$, so it is easy to choose the connected sum in such a way that $\ZZ_2 \subseteq \langle \voltage C_0 \rangle$ (see the proof of \cref{UsualConnSum}). 

The cycle~$C$ contains the paths $[\quot e](b, a, b^{-1})$ and $[\quot b^2](a)$.
By taking just a bit of care in the creation of~$C_0$ (namely, not using any of these edges for the first connected sum), we may assume that $C_0$ also contains these paths.
Then \cref{StandardAlteration} (with $s = b$, $t = a$, $u = b$, and $h = b^2$) provides a hamiltonian cycle $C_1$, such that $(\voltage C_0)^{-1}(\voltage C_1) = [a,b]^2$ \refnote{k=2Ga>2voltage}
(because $b$ centralizes~$G'$).
 This is a generator of~$\ZZ_p$, so either $\voltage C_0$ or $\voltage C_1$ generates~$G'$. Therefore, \cref{FGL} applies.

\begin{subsubcase}
Assume $|\quot{G} : \langle \quot a \rangle| = 2$.
\end{subsubcase}
The irredundance of~$S$ implies that $S = \{a,b,c\}$ \csee{Slessamin}. We have the following hamiltonian cycle in $\Cay(\quot G; S)$:\refnote[15]{Ga=2}
	$$ C = ( b^2, a^{m-5}, c, a^{-(m-4)}, c^{-1}, b^{-1}, c, a, b^{-1}, c^{-1} ) .$$
Since $\quot b \in \langle \quot a \rangle$,
 the irredundance of~$S$ implies $\langle [a,c] \rangle = \ZZ_2$.\refnote{acisZ2}
 So $m$~is even \csee{Cent->Divides}. However, $\ZZ_2 \nsubseteq \langle [b,c] \rangle$, because $k = 2$ is even. So\refnote[15]{Ga=2modp}
	$$ \voltage C = b^2 (a^{m-5} c a^{-(m-4)} c^{-1}) (b^{-1} c a b^{-1} c^{-1})
	\equiv b^2 (a^{-1}) (b^{-2} a [a,c] )
	\equiv [a,c] \pmod{\ZZ_p}
	, $$
which generates~$\ZZ_2$.
We may also assume that $c$ either centralizes~$G'$ or inverts~$G'$ (for otherwise a preceding case applies after interchanging $a$ with~$c$).\refnote{acInterchange}
 Therefore\refnote[17]{Ga=2mod2}
	\begin{align*}
	 \voltage C 
	&= b^2 (a^{m-5} c a^{-(m-4)} c^{-1}) (b^{-1} c a b^{-1} c^{-1})
	\equiv a^4 \gamma^2 (a^{-1}) (\gamma^{-1} a^{-2} c a \gamma^{-1} a^{-2} c^{-1})
	\\&= \gamma^3 \cdot (\gamma^{-1})^{c}
	= \gamma^3 \cdot \gamma^{\pm1}
	\in \{\gamma^2, \gamma^4\} 
	\pmod{\ZZ_2}
	, \end{align*}
which generates~$\ZZ_p$.
We now know that $\voltage C$ projects nontrivially to both $\ZZ_2$ and~$\ZZ_p$, so it generates~$G'$. Therefore, \cref{FGL} applies.
\end{proof}

\section{Cases with \texorpdfstring{$|\quot a| = 2$ and $\#S = 2$}{|aG'| = 2 and |S| = 2}} \label{a=S=2}

\begin{assump}
In this \lcnamecref{a=S=2}, we assume 
	\begin{itemize}
	\item $|\quot a| = 2$, for all $a \in S$, such that $a$~does not centralize~$G'$,
	and
	\item $\#S = 2$.
	\end{itemize}
We may assume $|a| = 2$, for otherwise \cref{s=t} applies with $s = a$ and $t = a^{-1}$.

We may also assume that $b$ centralizes~$G'$, for otherwise we must have $|\quot b| = 2$, so $|G| = 8p$, so \cref{|G|small} applies.
Since $a$ does not centralize~$G'$, this implies $\quot a \notin \langle \quot b \rangle$.
Let 
	$$n = |\quot G : \langle \quot a \rangle| = |\quot G| / 2 = |\quot b| .$$
\end{assump}

\begin{CASE}
Assume $n \not\equiv 1 \pmod{p}$.
\end{CASE}

\begin{proof}
Let $C = (a^{-1}, b^{-(n-1)}, a, b^{n-1})$, so $C$ is a hamiltonian cycle in $\Cay(\quot G; S)$ with $\voltage C = [a, b^{n-1}] = [a,b]^{n-1}$, since $b$~centralizes~$G'$. Note that $n$~is even \csee{Cent->Divides}, and, by assumption, $n \not\equiv 1 \pmod{p}$. Therefore, $n-1$ is relatively prime to $2p$, so $\voltage C$ generates~$G'$, so \cref{FGL} applies.
\end{proof}

\begin{CASE}
Assume $n \equiv 1 \pmod{p}$.
\end{CASE}

\begin{proof}
We claim that $\ZZ_p \subseteq \langle b \rangle$. Suppose not. Then $|\langle b, \ZZ_2 \rangle| = 2n$. Since $\gcd(2n,p) = 1$, the abelian group $\langle b, G' \rangle$ has a unique subgroup of order~$2n$, so we conclude that $\langle b, \ZZ_2 \rangle$ is normal in~$G$. This implies that 
	$$ \langle a \rangle \langle b, \ZZ_2 \rangle = \langle a , b, \ZZ_2 \rangle \supseteq \langle a,b \rangle = G ,$$
so 
	$$|G| \le |a| \cdot |\langle b, \ZZ_2 \rangle| = 2 \cdot 2n = 4n .$$
This contradicts the fact that $|G| = 4np$.

\begin{subcase}
Assume $\ZZ_2 \subseteq \langle b \rangle$.
\end{subcase}
Combining this assumption with the above claim, we see that $G' \subseteq \langle b \rangle$. This implies $\langle b \rangle \normal G$, so $G = \langle a \rangle \ltimes \langle b \rangle$. Since $|a| = 2$, this implies that $\Cay(G; a,b)$ is a generalized Petersen graph. Then the main result of \cite{Bannai-HamCycGenPet} tells us that $\Cay(G; a,b)$ has a hamiltonian cycle.\refnote{BannaiThm}

\begin{subcase} \label{a=2&p=1}
Assume $\ZZ_2 \nsubseteq \langle b \rangle$.
\end{subcase}
Since $\langle b, G'\rangle$ is abelian, $\gcd(n, p) = 1$, and $\ZZ_2 \nsubseteq \langle b \rangle$, we may write 
	$$\langle b, G'\rangle = \ZZ_2 \times \ZZ_p \times \ZZ_n .$$
Then $G = \langle a \rangle \ltimes (\ZZ_2 \times \ZZ_p \times \ZZ_n)$, and we may assume $b = (0,1,1)$ and $[a,b] = (1,2,0)$. For $\ul G = G/\langle b^2 \rangle = G/(\ZZ_p \times 2\ZZ_n)$, it is straightforward to check that $\bigl( (a,b)^4\#, b^{-1} \bigr)$ is a hamiltonian cycle in $\Cay(\ul G; a,b)$ whose voltage is $(0,-2,2)$.\refnote{Z2notinbaid}
 (This hamiltonian cycle is taken from the final paragraph of Case~1 of the proof of \cite[Prop.~6.1]{CurranMorrisMorris-16p}.) This voltage generates $\ZZ_p \times 2\ZZ_n$ (since $\gcd(p,n) = 1$), so \cref{FGL} applies.
\end{proof}

\section{Cases with \texorpdfstring{$|\quot a| = 2$ and $\#S = 3$}{|aG'| = 2 and |S| = 3}} \label{a=2S=3}

\begin{assump} \label{a=2S=3assump}
In this \lcnamecref{a=2S=3}, we assume 
	$$S = \{a,b,c\} ,$$
and
	$$ \text{$|\quot s| = 2$, for all $s \in S$, such that $s$~does not centralize~$G'$.} $$
We also assume \cref{s=t} does not apply. (So $|s| = 2$.) In particular, we have $|a| = 2$.

Note that $\quot a \notin \langle \quot b \rangle$. (If $\quot a \in \langle \quot b \rangle$, then~$b$, like~$a$, does not centralize~$G'$, so our assumption implies $|\quot b| = 2$. Then $\quot a = \quot b$, contradicting the fact that \cref{s=t} does not apply.)
\end{assump}

\begin{notation}
Let
	$$ n = |\quot b| = |\langle \quot a, \quot b \rangle : \langle \quot a \rangle| \ge 2
	 \hbox{\quad and\quad}
	 \ell = |\quot G: \langle \quot a, \quot b \rangle| = |\quot G|/(2n) \ge 2 .$$
The last inequality is because the irredundance of~$S$ implies $\quot c \notin \langle \quot a, \quot b \rangle$ \csee{Slessamin}. 
\end{notation}

\begin{CASE} \label{a=2&b=3}
Assume $|\quot b| = 3$.
\end{CASE}
\begin{proof}
Since $|\quot b| \neq 2$, \Cref{a=2S=3assump} implies that $b$ centralizes~$G'$. Also, since $|\quot b|$ is odd, \cref{Cent->Divides} implies that $[a,b]$ and $[b,c]$ project trivially to~$\ZZ_2$, so $[a,c]$ must project nontrivially (and $\ell$~must be even). 
We have the following hamiltonian path in $\Cay \bigl( \quot G/\langle \quot a \rangle; S \bigr)$:\refnote[15]{a=2+b=3L}
	$$ L = (c^{\ell-1}, b, c^{-(\ell-1)}, b, c^{\ell-1}) .$$
Then $C = (L, a, L^{-1} , a)$ is a hamiltonian cycle in $\Cay(\quot G; S)$.
Since $\ell-1$ is odd, it is easy to see that $\ZZ_2 \subseteq \langle \voltage C \rangle$.\refnote{a=2+b=3LaLa}

Since $C$ contains both $[\quot c^{\ell-2}](c, b, c^{-1})$ and $[\quot c^{\ell-1} \quot a \quot b](b^{-1})$, \cref{StandardAlteration} (with $s = c$, $t = b$, $u = a$, and $h = c^{\ell-1} a$) provides a hamiltonian cycle $C'$, such that $(\voltage C)^{-1} (\voltage C')$ is conjugate to $[t^{-1}, u] \, [s,t^{-1}]^u = [b^{-1}, a] \,[c, b^{-1}]^a = [a,b] \, [c,b]$. This is an element of~$\ZZ_p$.
If it generates~$\ZZ_p$, then either $\voltage C$ or $\voltage C'$ generates~$G'$, so \cref{FGL} applies.

Thus, we may assume $[a,b] \, [c,b]$ is trivial. Since $\ZZ_p \subseteq \langle [a,b] \rangle$ (see~\pref{bDefn}), this implies that $[c,b]$ is nontrivial. So we may assume that $c$~does not centralize~$\ZZ_p$ (for otherwise replacing $c$ with~$c^{-1}$ would replace $[c,b]$ with $[c,b]^{-1}$, which would not cancel $[a,b]$). 

Now, \cref{a=2S=3assump} implies $| \quot c| = 2$, so we have the hamiltonian cycle\refnote[15]{a=2+b=3C0}
	$$ C_0 = (b^2, a, b^2, c, a,b,a,b, a, c ) ,$$
in $\Cay(\quot G; S)$. This contains both the path $[bac](a,b,a)$ and the edge $[b](b)$, so applying \cref{StandardAlteration} (with $s = a$, $t = b$, $u = c$, and $h = b$) provides a hamiltonian cycle~$C_1$, such that $\bigl( \voltage C_0 \bigr)^{-1} \bigl( \voltage C_1 \bigr)$ is conjugate to $[u,t^{-1}] \, [s,t^{-1}]^u = [c,b^{-1}] \, [a, b^{-1}]^c$. This is not equal to $[a,b] \, [c,b]$ (which is trivial), because $[a, b^{-1}]^c = [a,b]$, but $[c,b^{-1}] = [c,b]^{-1} \neq [c,b]$. So $\bigl( \voltage C_0 \bigr)^{-1} \bigl( \voltage C_1 \bigr)$ is nontrivial, and therefore generates~$\ZZ_p$. Since a straightforward calculation shows that $\ZZ_2$ is contained in $\langle \voltage C_0 \rangle$,\refnote{a=2+b=3Z2}
 this implies that  either $\voltage C_0$ or $\voltage C_1$ generates~$G'$, so \cref{FGL} applies. 
\end{proof}

\begin{CASE} \label{a=c=2}
Assume $\ell = 2$.
\end{CASE}

\begin{proof}
We may assume $|\quot b| \ge 4$, for otherwise either $|\quot b| = 2$, so \cref{|G|small} applies (because $|G| = 16p$), 
or $|\quot b| = 3$, so \cref{a=2&b=3} applies.
Let\refnote[15]{a=c=2L}
	$$ \text{$L = ( a,b,a, b^{n-2}, a, b^{-(n-3)} )$
	\qquad and \qquad
	$C = (L, c, L^{-1}, c^{-1} )$} ,$$
so $L$ is a hamiltonian path in $\Cay \bigl( \langle \quot a, \quot b \rangle ; a,b \bigr)$ and $C$ is a hamiltonian cycle in $\Cay(\quot G; S)$.

\begin{subcase}
Assume $[a,c]$ and $[a,b][b,c]$ are not both in~$\ZZ_p$.
\end{subcase}
A straightforward calculation (using \cref{Cents->Homo}) shows that $\voltage C \equiv [a,c] \pmod{\ZZ_p}$.\refnote{a=c=2Cv}
 If this is in~$\ZZ_p$, then, by assumption, $[a,b][b,c] \notin\ZZ_p$, so applying \cref{StandardAlteration} to the paths $[\quot e](a,b,a)$ and $[\quot{abc}](b^{-1})$ in~$C$ (so $s = a$, $t = b$, $u = c$, and $h = ac$)  yields a hamiltonian cycle~$C'$, such that $\voltage C'$ projects nontrivially to~$\ZZ_2$.\refnote{a=c=2C'Z2}
 Therefore, we have a hamiltonian cycle (either~$C$ or~$C'$) whose voltage is not in~$\ZZ_p$.

Now, since $|\quot b| \ge 4$, we know that $C$ (and also~$C'$) contains the paths $[\quot{b^{-2}ac}] (b, a, b^{-1})$ and $[\quot{ac}](a)$. Furthermore, we know that $[b,a] [b,a]^b$ is a nontrivial element of~$\ZZ_p$ (because $b$ does not invert~$[a,b]$). Therefore, \cref{StandardAlteration} (with $s = b$, $t = a$, $u = b$, and $h = ac$) yields a hamiltonian cycle~$C_1$ (or~$C_1'$) whose voltage generates~$G'$, so \cref{FGL} applies.

\begin{subcase}
Assume $[a,c]$ and $[a,b][b,c]$ are both in~$\ZZ_p$.
\end{subcase}
Since $[a,c]$, $[a,b]$, and $[b,c]$ generate~$G'$, they cannot all be in~$\ZZ_p$, so this assumption implies that neither $[a,b]$ nor $[b,c]$ is in~$\ZZ_p$.
Also, we may assume $\langle [a,c] \rangle = \ZZ_p$, for otherwise $[a,c] = e$, so we could apply \cref{Durnberger-commuting} with $s = c$.

We have the following hamiltonian cycle in $\Cay( \quot G; S )$:\refnote[15]{bothinC0}
	$$ C_0 = ( b^{n-1}, c, b^{-(n-2)}, a, b^{n-2}, c^{-1}, b^{-(n-1)}, c, a, c^{-1} ) .$$
Then
	\begin{align*}
	\voltage C_0 
	&= b^{n-1} c \bigl( b^{-(n-2)} a b^{n-2} \bigr) c^{-1} b^{-(n-1)} c  a c^{-1}
	\\&=  b^{n-1} c \bigl( a [a,b]^{n-2} \bigr) c^{-1} b^{-(n-1)} c  a c^{-1}
	\\&= ([a,b]^{-(n-2)})^c \cdot b^{n-1} (c  a  c^{-1}) b^{-(n-1)} (c  a c^{-1})
	\\&= ([a,b]^{-(n-2)})^c \cdot [b, c  a c^{-1}]^{-(n-1)}
	\\&= ([a,b]^{-(n-2)})^c \cdot [b, a]^{-(n-1)} && \text{($cac^{-1} \in aG'$ and $G' \subseteq C_G(b)$)}
	\\&= ([a,b]^{-(n-2)})^c \cdot [a,b]^{n-1}
	.\end{align*}
If $c$ centralizes $\ZZ_p$, then $\voltage C_0 = [a,b]$ generates~$G'$, so \cref{FGL} applies.

We may now assume $c$ does not centralize~$\ZZ_p$. Then \cref{a=2S=3assump} tells us that $c$ inverts~$\ZZ_p$, so $\voltage C_0 = [a,b]^{2n-3}$ (and $|c| = 2$).
Hence, we may assume $2n \equiv 3 \pmod{p}$, for otherwise $\voltage C_0$ generates~$G'$, so \cref{FGL} applies.
We now consider the following hamiltonian cycle in $\Cay( \quot G; S )$:\refnote[15]{bothinCstar}
	$$ C_* = ( b^{n-3}, c, b^{-(n-4)}, a, b^{n-4}, c^{-1}, b^{-(n-3)}, c, (b^{-1}, c)^2, a, (c,b)^2, c^{-1}   ) .$$
We have
	\begin{align*}
	\voltage C_* 
	&= b^{n-3} c \bigl( b^{-(n-4)} a b^{n-4} \bigr) c^{-1} b^{-(n-3)} c \bigl( (b^{-1}c)^2 a (cb)^2 \bigr) c^{-1}
	.\end{align*}
Since $cb$ inverts~$G'$, we know that $(b^{-1}c)^2 a (cb)^2 = a$,\refnote{acb=a}
so $\voltage C_*$ is exactly the same as the voltage of~$C_0$, but with $n$ replaced by~$n-2$; that is, 
	$$\voltage C_* =  [a,b]^{2(n-2)-3} = [a,b]^{2n-7}. $$
Since $2n \equiv 3 \pmod{p}$, we have 
	$$ 2n - 7 \equiv 3 - 7 = -4 \not\equiv 0 \pmod{p} ,$$
so $\voltage C_*$ generates~$G'$, so \cref{FGL} applies.
\end{proof}

\begin{CASE} \label{bNot3&lnot2}
Assume $|\quot b| \neq 3$ and $\ell \neq 2$.
\end{CASE}

\begin{proof}
Since $\ell \neq 2$, we know $|\quot c| > 2$, so $c$ must centralize~$G'$ (by \cref{a=2S=3assump}). Also, \cref{Cent->Divides} implies that $|\quot b|$ and $\ell$ cannot both be odd.
	\begin{itemize}

	\item If $|\quot b|$ is odd (so $\ell$ is even), let\refnote[15]{bNot3+lnot2Lodd}
		\begin{align*}
		L &= \bigl( c^{\ell-1}, b, c^{-1}, b, c, b, (b^{n-4}, c^{-1}, b^{-(n-4)}, c^{-1})^{\ell/2}\#, 
		b^{-1}, c^{\ell-3}, b^{-1} , c^{-(\ell-3)} \bigr) 
		. \end{align*}

	\goodbreak 
	\item If $|\quot b|$ is even, let\refnote[15]{bNot3+lnot2Leven}
		\begin{align*}
		L &= \bigl( c^{\ell-1}, b^{n-1}, c^{-1}, (c^{-(\ell-2)}, b^{-1}, c^{\ell-2}, b^{-1})^{(n-2)/2}, c^{-(\ell-2)} \bigr)
		. \end{align*}
	\end{itemize}
In either case, $L$ is a hamiltonian path in $\Cay \bigl( \quot G / \langle \quot a \rangle ; \{b,c\} \bigr)$ from $\quot e$ to~$\quot b$. Now, let
	$$ C = (L, a, L^{-1}, a) 
	\quad \text{and} \quad
			 (g,\epsilon) = 
			\begin{cases}
			(c^{\ell-1}, -1) & \text{if $|\quot b| = 2$ or $|\quot b|$ is odd}, \\
			(ab^2, 1) & \text{if $|\quot b| > 2$ and $|\quot b|$ is even}
			, \end{cases} $$
so $C$ is a hamiltonian cycle in $\Cay(\quot G; S)$ that contains the paths
	$$ [\quot {bc}](c^{-1},a,c), 
	\quad
	[\quot{ca}](c^{-1}, a, c),
	\quad
	[\quot g](b),
	\text{\quad and\quad}
	[\quot{gbac^\epsilon}](c^{-\epsilon}, b^{-1}, c^{\epsilon}) .$$ 
Note that $[\quot {bc}](c^{-1},a,c)$ contains $[\quot b](a)$ and that $[\quot{ca}](c^{-1}, a, c)$ contains $[\quot a](a)$. Also note that all of these paths are vertex-disjoint (except for the vertices $\quot{ac}$ and $\{abc\}$ when $|\quot b| = 2$ and $\ell = 3$).
We introduce some terminology:
	\begin{itemize}
	\item Applying \cref{StandardAlteration} to the oriented paths $[\quot{ca}](c^{-1}, a, c)$ and $[\quot b](a)$ (so $s = c^{-1}$, $t = a$, $u = b$, and $h = ab$) will be called the ``$a$-transform\rlap.'' This multiplies the voltage by $\gamma_a$, where 
	$\gamma_a 
	= [a,b^{-1}] [c,a]$.\refnote{atransform}
	\item Applying \cref{StandardAlteration} to the oriented paths $[\quot g](b)$ and $[\quot{gbac^{\epsilon}}](c^{-\epsilon}, b^{-1}, c^{\epsilon})$ (so $s = c^{-\epsilon}$, $t = b^{-1}$, $u = a$, and $h = gb$) will be called the ``$b$-transform\rlap.'' This multiplies the voltage by a conjugate of $\gamma_b$, where $\gamma_b = [b,a][b,c^{-\epsilon}]$.\refnote{btransform}
	\end{itemize}

\begin{subcase} \label{PreciselyOne}
Assume precisely one of $\gamma_a$ and~$\gamma_b$ is in~$\ZZ_p$.
\end{subcase}
Write $\{a,b\} = \{x,y\}$, such that $\gamma_x \in \ZZ_p$ and $\gamma_y \notin \ZZ_p$.
We may assume $\langle \gamma_x \rangle = \ZZ_p$ (by replacing $c$ with its inverse, if necessary).
Choose $C'$ to be either $C$ or the $y$-transform of~$C$, such that $\voltage C'$ projects nontrivially to~$\ZZ_2$. 
Then choose $C''$ to be either $C'$ or the $x$-transform of~$C'$, such that $\voltage C''$ generates~$G'$, so \cref{FGL} applies.

\begin{subcase}
Assume $\gamma_a$ and~$\gamma_b$ are both in~$\ZZ_p$.
\end{subcase}
Since $[a,b]$, $[a,c]$, and $[b,c]$ cannot all be in~$\ZZ_p$, this assumption implies that none of them are in~$\ZZ_p$. Therefore, since the path~$L$ has odd length, we see that $\voltage C$ has nontrivial projection to~$\ZZ_2$.\refnote{bothZ2}

We may assume (by replacing $c$ with its inverse, if necessary), that $\gamma_a$ has nontrivial projection to~$\ZZ_p$, so $\langle \gamma_a \rangle = \ZZ_p$. Therefore, by choosing $C'$ to be either $C$ or the $a$-transform of~$C$, such that $\voltage C'$ generates~$G'$, we may apply \cref{FGL}.

\begin{subcase} \label{gammabcnontrivZ2}
Assume neither $\gamma_a$ nor~$\gamma_b$ is in~$\ZZ_p$, and $b$~centralizes~$G'$.
\end{subcase}
%
Note that the sum of the exponents of the occurrences of~$b$ in~$L$ is~$1$, and the sum of the exponents of the occurrences of~$c$ is~$0$.
Therefore, since $b$ and~$c$ centralize~$G'$, \cref{Cents->Homo} implies that $\voltage C = [a,b]$.\refnote{neitherC}
 Hence, we may assume $[a,b] \in \ZZ_p$ (for otherwise $\langle \voltage C \rangle = G'$, so \cref{FGL} applies).
Then, by the assumption of this \lcnamecref{gammabcnontrivZ2}, we conclude that $[a,c] \notin \ZZ_p$. So we may assume $\langle [a,c] \rangle = \ZZ_2$, for otherwise $b$ and~$c$ could be interchanged, resulting in a situation in which $[a,b] \notin \ZZ_p$, and which has therefore already been covered.
Also, since $[a,b] \in \ZZ_p$ and $[a,c] \notin \ZZ_p$, \cref{Cent->Divides} tells us that $\ell$ is even (and recall that $\ell \neq 2$).

Since $[a,b]$ is a nontrivial element of~$\ZZ_p$, and $b$~centralizes~$G'$, we see from \cref{Divbyp} that $|b|$ is divisible by~$p$. Therefore, $|b| \neq 2$, so we may assume $|\quot b| > 2$ (for otherwise \cref{s=t} applies with $s = b$ and $t = b^{-1}$). Since $|\quot b| \neq 3$ (by the assumption of this \lcnamecref{bNot3&lnot2}), this implies $n = |\quot b| \ge 4$, so we may let\refnote{L0}
	$$L_0 = \bigl( c^{\ell-1}, b, c^{-(\ell-1)}, b^2, (b^{n-4}, c, b^{-(n-4)}, c)^{\ell/2}\#, b^{-1}, c^{-(\ell-2)} \bigr) ,$$
so $L_0$ is a hamiltonian path  from $\quot e$ to~$\quot{b^2 c}$ in $\Cay \bigl( \quot G/\langle \quot a \rangle; \{b,c\} \bigr)$.
Note that the sum of the exponents of the occurrences of~$b$ in~$L$ is~$2$, and the sum of the exponents of the occurrences of~$c$ is~$1$.
Therefore, since $b$ and~$c$ centralize~$G'$, \cref{Cents->Homo} implies $\voltage (L_0, a, L_0^{-1}, a) = [a,b]^2 [a,c]$.
This generates~$G'$, so \cref{FGL} applies.
%

\begin{subcase} \label{gammas&invert}
Assume neither $\gamma_a$ nor~$\gamma_b$ is in~$\ZZ_p$, and $b$~does not centralize~$\ZZ_p$.
\end{subcase}
From \cref{a=2S=3assump}, we know $\quot b = 2$ (so $b$ must invert~$G'$).

We may assume $[a,c] \in \ZZ_2$, for otherwise \cref{a=c=2} could be applied by interchanging $b$ and~$c$. Then we may assume $[a,c]$ is the nontrivial element of~$\ZZ_2$, for otherwise the assumption that $\gamma_a \notin \ZZ_p$ implies $\langle [a,b] \rangle = G'$, so $\langle a,b \rangle \normal G$, and then \cref{Durnberger-commuting} applies with $s = c$.

By applying the same argument, with $a$ and~$b$ interchanged, we may assume $[b,c]$ is also the nontrivial element of~$\ZZ_2$. This implies $[a,b] \in \ZZ_p$, since $\gamma_b \notin \ZZ_p$.

Note that, since $a$ and~$b$ both have order~$2$ (and invert~$G'$), the image of $\langle a,b \rangle$ in $G/\ZZ_2$ is the dihedral group of order~$2p$. Also, the preceding two paragraphs 
imply that $c$ is in the center of $G/\ZZ_2$. Therefore, we have the following hamiltonian cycle in $\Cay \bigl( G/\ZZ_2 ; S \bigr)$:\refnote[15]{DpC}
	$$ C = \bigl( c, ( c^{\ell-2}, a, c^{-(\ell-2)}, b )^p\#, c^{-1}, (a^{-1},b^{-1})^p\# \bigr) .$$
Since $[a,b]$ projects trivially to~$\ZZ_2$, \cref{Cent->Divides} implies that $\ell$~is even, so,
calculating modulo~$\ZZ_p$, we have
	\begin{align*}
	 \voltage C
	&= c ( c^{\ell-2} a c^{-(\ell-2)}  b )^p b^{-1} c^{-1} (a^{-1}b^{-1})^p b
	\\&\equiv c ( a b )^p  b^{-1} c^{-1} (a^{-1}b^{-1})^p b
	&& \begin{pmatrix} \text{$\ell-2$ is even, so $c^{\ell-2}$} \\ \text{is central modulo~$\ZZ_p$} \end{pmatrix}
	\\&\equiv z^{2p-1} ( a b )^p b^{-1} (a^{-1}b^{-1})^p b
	&& \begin{pmatrix} \text{letting $z = [a,c] = [b,c]$ be} \\ \text{the nontrivial element of~$\ZZ_2$} \end{pmatrix}
	\\&\equiv z
	&& (\text{$z^2 = e$ and $[a,b] \in \ZZ_p$})
	.\end{align*}
Since this generates~$\ZZ_2$, \cref{FGL} applies.
\end{proof}

\section{Cases with \texorpdfstring{$|\quot a| = 2$ and $\#S \ge 4$}{|aG'| = 2 and |S| > 3}} \label{a=2S>3}

\begin{assump} \label{S>3Assump}
In this \lcnamecref{a=2S>3}, we assume 
	\begin{itemize}
	\item $\# S \ge 4$,
	and
	\item $|\quot s| = 2$, for all $s \in S$, such that $s$~does not centralize~$G'$.
	\end{itemize}
We also assume \cref{s=t} does not apply. (So $|s| = 2$.) 

Furthermore, we assume $\quot b \notin \langle \quot a \rangle$ (otherwise, \cref{s=t} applies). 
Then it is easy that we also have $\quot a \notin \langle \quot b \rangle$.\refnote{anotinb}
\end{assump}

\begin{outline}
This final 
\lcnamecref{a=2S>3} of the proof is longer than the others, so here is an outline of the cases and subcases that it considers. 
	\begin{itemize}

	\item[\ref{nocent}:]
	\emph{Assume no element of~$S$ centralizes~$G'$.}
		\begin{itemize}
		
		\item[\ref{>5}:]
		\emph{Assume $\#S \ge 5$.}
		
		\item[\ref{S=4NotCent}:]
		\emph{Assume $\#S = 4$.}
		
		\end{itemize}
	
	\item[\ref{S=4asNotZp}:]
	\emph{Assume there exists $s \in S$, such that $[a,s] \notin \ZZ_p$, and, in addition, either 
	$s = b$, 
	or
	$b$~centralizes~$G'$, 
	or
	$\ZZ_p \subseteq \langle S \sm \{a\}\rangle ' $.}
		\begin{itemize}
		
		\item[\ref{S=4asNotZpAbel}:]
		\emph{Assume $\ZZ_p \nsubseteq \langle S \sm \{a\} \rangle ' $.}
		
		\item[\ref{S=4asNotZpNonabel}:]
		\emph{Assume $\ZZ_p \subseteq \langle S \sm \{a\} \rangle ' $.}
		
		\end{itemize}
	
	\item[\ref{S>3bcent}:]
	\emph{Assume $b$ centralizes~$G'$.}
		\begin{itemize}
		
		\item[\ref{S>3bcent-somenontriv}:]
		\emph{Assume there exists $c \in S$, such that $[c,b] \notin \ZZ_p$.}

		\item[\ref{S>3bcent-alltriv}:]
		\emph{Assume $[c,b] \in \ZZ_p$ for all $c \in S$.}
		
		\end{itemize}
		
	\item[\ref{Leftovers}:]
	\emph{Assume that none of the preceding cases apply.}
	\\ Since \cref{nocent} does not apply, some element~$c$ of~$S$ centralizes~$G'$.
		\begin{itemize}
		
		\item[\ref{abinZpnotZ2}:]
		\emph{Assume $\langle [s,c] \rangle \neq \ZZ_2$, for some $s \in S \sm \{c\}$.}
		
		\item[\ref{abinZp=Z2}:]
		\emph{Assume $\langle [s,c] \rangle = \ZZ_2$, for all $s \in S \sm \{c\}$.}
		
		\end{itemize}
	
	\end{itemize}
\end{outline}
%

\begin{notation}
Let $ n = |\quot b|$ and $ \ell = | \quot G : \langle \quot a, \quot b \rangle| = |\quot G|/(2n) $.
\end{notation}

\begin{note} \label{minmodab}
The irredundance of~$S$ implies $S \sm \{a,b\}$ is an irredundant generating set for $\quot G / \langle \quot a, \quot b \rangle$ \csee{Slessamin}, so $\ell \ge 4$.
\end{note}

\begin{CASE} \label{nocent}
Assume no element of~$S$ centralizes~$G'$.
\end{CASE}

\begin{proof}
From \cref{S>3Assump}, we see that every element of~$S$ inverts~$G'$ (and has order~$2$).
We may assume no two elements of~$S$ commute, for otherwise it is not difficult to see that \cref{Durnberger-commuting} applies.\refnote{NoCommute}

Let $c,d \in S \sm \{a,b\}$, and let $\gamma = [a,b] \, [a,c]$. We claim that we may assume $\gamma \notin \ZZ_2$, by permuting $b,c,d$. To this end, first note that if $\gamma \in \ZZ_2$, then $\ZZ_p \subseteq \langle [a,c] \rangle$, so there is no harm in putting $c$ into the role of~$b$. Now, let us suppose $[a,b][a,c]$, $[a,c][a,d]$, and $[a,d][a,b]$ are all in~$\ZZ_2$. Then
	$$ [a,b] \equiv [a,c]^{-1} \equiv [a,d] \equiv [a,b]^{-1}  \pmod{\ZZ_2} ,$$
which contradicts the fact that $[a,b] \notin \ZZ_2$ (and $p$~is odd). 

Let\refnote[17]{cacb}
	$$ C = \bigl( (c,a,c,b)^2\#, d \bigr)^2 ,$$
so $C$ is a hamiltonian cycle in $\Cay \bigl( \langle \quot a, \quot b, \quot c, \quot d \rangle ; \{a,b,c,d\} \bigr)$ that contains the vertex-disjoint paths $[\quot e](c, a, c )$, $[\quot{abc}](a)$, 
$[\quot{bd}](c, a, c )$, and $[\quot{acd}](a)$. Applying \cref{StandardAlteration} to the paths $[\quot e](c, a, c )$ and $[\quot{abc}](a)$ (so $s = c$, $t = a$, $u = b$, and $h = bc$) will multiply the voltage by~$\gamma$.\refnote{nocentvoltage1}
Applying \cref{StandardAlteration} to the other two paths $[\quot {bd}](c, a, c )$ and $[\quot{acd}](a)$ (so $s = c$, $t = a$, $u = b$, and $h = cd$) will also multiply the voltage by~$\gamma$\refnote{nocentvoltage2}
(because $bc$ and $cd$ both centralize~$G'$). Therefore, applying \cref{StandardAlteration} twice yields a hamiltonian cycle $C''$, such that $(\voltage C)^{-1} (\voltage C'') = \gamma^2$, which is a generator of~$\ZZ_p$.

\begin{subcase} \label{>5}
Assume $\#S \ge 5$.
\end{subcase}
If there exist $s,t \in S$, such that $s \notin \{a,b,c\}$, and $[s,t] \notin \ZZ_p$, then the preceding paragraph 
 implies that \fullcref{UsualConnSum}{even} applies.

Thus, we may assume that the preceding condition does not apply (for any legitimate choice of $a$, $b$, and~$c$). Fix two elements $x,y \in S \sm \{a,b,c\}$. The failure of the condition implies $[x,S] \subseteq \ZZ_p$. In particular, $[x,y]$ must be a generator of~$\ZZ_p$ (because no two elements of~$S$ commute), so we may let $\{x,y\}$ play the role of $\{a,b\}$. So we may let $\{x,y,b,c\}$ play the role of $\{a,b,c,d\}$. Then, since $a \notin \{x,y,b,c\}$, the failure of the condition implies $[a,S] \subseteq \ZZ_p$. Similarly, $[b,S]$ and $[c,S]$ are also in $\ZZ_p$. So $[s,t] \subseteq \ZZ_p$ for all $s,t \in S$. This contradicts the fact that $\langle [S,S] \rangle = G' \nsubseteq \ZZ_p$.

\begin{subcase} \label{S=4NotCent}
Assume $\#S = 4$.
\end{subcase}
For convenience, in this \lcnamecref{S=4NotCent} (and only in this \lcnamecref{S=4NotCent}), we drop our standing assumption that $\langle [a,b] \rangle$ contains~$\ZZ_p$. Instead, choose $b,d \in S$, such that $[b,d]$ projects nontrivially to~$\ZZ_2$. 
A straightforward calculation (using the fact that $a$, $b$, $c$, and~$d$ all invert~$G'$) shows that\refnote[15]{S=4NotCentvoltage}
	$$ \voltage C = [c,d]^4 [d,a]^2 [d,b] .$$
Since $[d,b]$ projects nontrivially to~$\ZZ_2$, but $[c,d]^4$ and $[d,a]^2$ have even exponents, so they obviously do not, we see that $\ZZ_2 \subseteq \langle \voltage C \rangle$. Therefore, we may assume $\voltage C \in \ZZ_2$, for otherwise \cref{FGL} applies. 

We may assume $\gamma \in \ZZ_2$, for otherwise applying \cref{StandardAlteration} twice (as in the paragraph immediately before 
 \cref{>5}) yields a hamiltonian cycle whose voltage generates~$G'$, so \cref{FGL} applies. By the definition of~$\gamma$, this means $[a,b][a,c] \in \ZZ_2$. And we may assume the same is true when $b$ and~$d$ are interchanged, which means $[a,d] [a,c] \in \ZZ_2$. So 
 	$$[a,b] \equiv [a,c]^{-1} \equiv [a,d] \pmod{\ZZ_2} .$$
By interchanging $a$ and~$c$, we conclude that we may also assume 
	$$[c,b] \equiv [c,a]^{-1} \equiv [c,d] \pmod{\ZZ_2} .$$
So 
	$$[c,d] \equiv[c,a]^{-1} = [a,c] \equiv [a,d]^{-1} = [d,a] \pmod{\ZZ_2}  .$$
Therefore 
	$$[d,a]^6 [d,b] = [d,a]^4 [d,a]^2 [d,b] \equiv [c,d]^4 [d,a]^2 [d,b] = \voltage C \equiv 0 \pmod{\ZZ_2} .$$

If $p \neq 3$, then, since we may assume the same is true when we interchange $a$ and~$c$, we conclude that $[d,c] \equiv [d,a] \pmod{\ZZ_2}$.\refnote{a<>c} 
Since we also have $[c,d] \equiv  [d,a] \pmod{\ZZ_2}$, we conclude that $[c,d]$ and $[a,d]$ are in~$\ZZ_2$. This implies $[b,d] \notin \ZZ_2$ (since $d$ does not centralize~$\ZZ_p$, and is therefore not in the center of $G/\ZZ_2$), so 
	$$\voltage C = [c,d]^4 [d,a]^2 [d,b] \equiv e^4  e^2 [d,b] = [d,b] \not\equiv 0 \pmod{\ZZ_2} .$$
This contradicts the fact that $\voltage C \in \ZZ_2$.

We now assume $p = 3$. Then the equation $[d,a]^6 [d,b] \equiv 0 \pmod{\ZZ_2}$ implies $[d,b] \in \ZZ_2$. This conclusion came from assuming only that $[d,b] \notin \ZZ_p$. Therefore, for all $s,t \in S$, the commutator $[s,t]$ must be in either $\ZZ_2$ or~$\ZZ_p$. However, 
	$$ [a,b] \equiv [c,a] \equiv [a,d] \equiv [b,c] \equiv [d,c] \pmod{\ZZ_2} ,$$
and $[a,b] \notin \ZZ_2$. Therefore, we conclude all five of these other commutators are in~$\ZZ_p$. (Therefore, the stated congruences between these commutators are actually equalities.)

Now, interchanging $a \leftrightarrow b$ and $c \leftrightarrow d$ in~$C$ yields a hamiltonian cycle~$C^*$, such that 
	$$\voltage C^* = [d,c]^4 [c,b]^2 [c,a] = [d,c] [b,c] [c,a] = [c,a]^3 = e $$
(because $p = 3$).
Let $\gamma\,^* = [b,a] \, [b,d]$, so $\gamma\,^*$ is obtained from $\gamma = [a,b]  [a,c]$ by interchanging $a \leftrightarrow b$ and $c \leftrightarrow d$. Then, since applying \cref{StandardAlteration} to~$C$ can multiply the voltage by $\gamma = [a,b] \, [a,c]$, we know that applying \cref{StandardAlteration} to~$C^*$ can multiply the voltage by~$\gamma\,^*$, which generates~$G'$. So \cref{FGL} applies.
\end{proof}

\begin{CASE} \label{S=4asNotZp} 
Assume there exists $s \in S$, such that $[a,s] \notin \ZZ_p$, and:
	$$ \text{either 
	\  $s = b$, 
	\ or \ 
	$b$~centralizes~$G'$, 
	\ or \ 
	$\ZZ_p \subseteq \langle S \sm \{a\}\rangle ' $}
	.$$
\end{CASE}

\begin{proof}
Let $S_0 = S \sm \{a\}$. Note that the irredundance of~$S$ implies $a \notin \langle S_0 \rangle \ZZ_2$ \csee{Z2inFrattini}.

\begin{subcase} \label{S=4asNotZpAbel}
Assume $\ZZ_p \nsubseteq \langle S_0\rangle ' $.
\end{subcase}
If $[a,b] \notin \ZZ_p$, we assume that $s = b$.
Let
	$$ g = \begin{cases}
		s & \text{if $[s,a] \notin \ZZ_2$}, \\
		sb^2 & \text{if $[s,a] \in \ZZ_2$}
		. \end{cases} $$
Note that $\langle [g,a] \rangle = G'$.\refnote{S=4asNotZpAbelga}

Let $H^* = \langle S_0 \rangle\ZZ_2/\ZZ_2$.
From the assumption of this \lcnamecref{S=4asNotZpAbel}, we know that $H^*$ is abelian. Therefore, \cref{ChenQuimpoOddEndpt} provides a hamiltonian path $L = (s_i)_{i=1}^r$ in $\Cay( \quot{H^*} ; S_0 )$, such that $s_1s_2 \cdots s_r \in g \ZZ_2$. Then $(L^{-1}, a, L, a)$ is a hamiltonian cycle in $\Cay(\quot G; S)$, and
	$$\voltage C = [ s_1s_2 \cdots s_r, a ] \in [g\ZZ_2,a] = \{[g,a]\} $$
(since $\ZZ_2$ is in the center of~$G$). This voltage generates~$G'$, so \cref{FGL} applies.

\begin{subcase} \label{S=4asNotZpNonabel}
Assume $\ZZ_p \subseteq \langle S_0\rangle ' $.
\end{subcase}
Suppose $w,x,y \in S^{\pm1} \sm \{a\}$, such that 
	\begin{align} \label{ascending}
	 \langle \quot w \rangle \subsetneq \langle \quot w, \quot x \rangle \subsetneq \langle \quot w, \quot x, \quot y \rangle
	. \end{align}
It is easy to construct a hamiltonian cycle~$C_0$ in $\Cay( \langle \quot {S_0} \rangle; S_0 )$, such that $C_0$ contains the oriented paths $[\quot {hw^{-1}y^{-1}}](w, x, w^{-1})$ and $[\quot{hx}](x^{-1})$, for some $h \in G$.\refnote{S=4asNotZpNonabelC0}
Furthermore, if
	\begin{align} \label{xnots16}
	\text{either \ $x \notin \{s^{\pm1}\}$ \ or \ $| \quot G | > 16$}
	, \end{align}
then, for some $\epsilon \in \{\pm1\}$, it is not difficult to arrange that the hamiltonian cycle~$C_0$ contains the oriented edge $[\quot {s^{\epsilon}}](s^{-\epsilon})$,\refnote{not16}
 and that this edge is not in either of the above-mentioned paths.

Applying \cref{StandardAlteration} to the first two paths (so $s = w$, $t = x$, and $u = y$) yields a hamiltonian cycle~$C_1$, such that $(\voltage C_0)^{-1} (\voltage C_1)$ is conjugate to $[x^{-1}, y] \, [w, x^{-1}]^y$.
Removing the edge $[\quot {s^{\epsilon}}](s^{-\epsilon})$ yields hamiltonian paths $C_0\#$ and~$C_1\#$ from $\quot e$ to~$\quot s^\epsilon$. 

From \cref{Z2inFrattini} and the assumption of this \lcnamecref{S=4asNotZpNonabel}, we see that $\langle \quot{S_0} \rangle \neq \quot G$.\refnote{anotS0G'}
So 
	$$ \text{$C_0^+ = \bigl( C_0\#, a, (C_0\#)^{-1}, a \bigr)$ \ and \ $C_1^+ = \bigl( C_1\#, a, (C_1\#)^{-1}, a \bigr)$} $$
are hamiltonian cycles in $\Cay(\quot G; S)$. 
For $k = 0,1$, we have
	$$ \voltage C_k^+ = \bigl[ \bigl( ( \voltage C_k) s^\epsilon \bigr)^{-1}, a \bigr] .$$
Since $\voltage C_k \in G'$, and $G'$ is central modulo~$\ZZ_p$ (and from the choice of~$s$), we have 
	$$ \voltage C_k^+ \equiv [s^\epsilon, a] \not\equiv e \pmod{\ZZ_p} .$$

Furthermore, if $[x^{-1}, y] \, [w, x^{-1}]^y$ projects nontrivially to~$\ZZ_p$, then $(\voltage C_0^+)^{-1}(\voltage C_1^+)$ does not centralize~$a$ modulo~$\ZZ_2$, so $\voltage C_0^+$ and $\voltage C_1^+$ are not both in~$\ZZ_2$. This implies that $\voltage C_k^+$ generates~$G'$ for some~$k$, so \cref{FGL} applies.
Therefore (after replacing $x^{-1}$ with~$x$ for simplicity), we may assume 
	\begin{align} \label{IfAscending}
	 \text{$[w, x]^y \, [x, y] \in \ZZ_2$ \ for all $w,x,y \in S^{\pm1} \sm \{a\}$ that satisfy \pref{ascending} and~\pref{xnots16}} 
	 . \end{align}
We will show that this leads to a contradiction.

Assume, for the moment, that $b$ centralizes~$G'$.
Then $n = |\quot b| > 2$ (because \cref{Divbyp} implies that $|b| \neq 2$), so $|\quot G| = 2n\ell > 2 \cdot 2 \cdot 4 = 16$. Therefore \pref{xnots16} is automatically satisfied.
Let $x,y \in S_0 \sm \{b \}$, such that $x \neq y$. We see from \cref{minmodab} that \pref{ascending} is satisfied for $w = b^{\pm1}$, so \pref{IfAscending} tells us 
	$$ \text{$[b, x]^y \, [x, y]$ \ and \ $[b^{-1}, x]^y \, [x, y]$ \ are both in~$\ZZ_2$} .$$
However, we also know that $[b^{-1},x] = [b,x]^{-1}$ (because we are assuming in this paragraph 
that $b$ centralizes~$G'$). Therefore
	$$[b,x]^y \equiv [x, y]^{-1} \equiv [b^{-1}, x]^y = \bigl( [b,x]^{-1} \bigr)^y \pmod{\ZZ_2} ,$$
so $[b, x] \in \ZZ_2$ (for all $x \in S_0$). Then, since $[b, x]^y \, [x, y] \in \ZZ_2$, we conclude that $[x,y] \in \ZZ_2$, for all $x,y \in S_0$. This contradicts the assumption of this \lcnamecref{S=4asNotZpNonabel}.

Now assume $b$ does not centralize~$G'$.
We may assume \cref{nocent} does not apply, so $G'$ is centralized by some $t \in S$
 (and $t \neq b$).  
 Let $w,x \in S_0 \sm \{t\}$ with $w \neq x$. Combining the irredundance of~$S$ with the fact that $t \neq b$ implies that \pref{ascending} is satisfied for $y = t^{\pm1}$\refnote{Ascendtnotb}
 (unless $\quot w = \quot x$, when \cref{s=t} applies).
 We may assume $x \neq s$ (by interchanging $w$ and~$x$, if necessary), so \pref{xnots16} is satisfied. 
 Then \pref{IfAscending} tells us 
	$$ \text{$[w, x]^t \, [x, t]$ \ and \ $[w, x]^{t^{-1}} \, [x, t^{-1}]$ \ are both in~$\ZZ_2$} .$$
Since $t$ centralizes~$G'$, this implies $[x, t] \equiv [x, t^{-1}] = [x,t]^{-1} \pmod{\ZZ_2}$,
so $[x,t] \in \ZZ_2$ (for all $x \in S_0$). Since $[w, x]^t \, [x, t] \in \ZZ_2$, this implies $[w,x] \in \ZZ_2$ (for all $w,x \in S_0$). This contradicts the assumption of this \lcnamecref{S=4asNotZpNonabel}.
\end{proof}

\begin{CASE} \label{S>3bcent}
Assume $b$ centralizes~$G'$.
\end{CASE}

\begin{proof}
We consider two \lcnamecref{S>3bcent-somenontriv}s. 

\begin{subcase} \label{S>3bcent-somenontriv}
Assume there exists $c \in S$, such that $[c,b] \notin \ZZ_p$.
\end{subcase}
We use some of the arguments of \cref{bNot3&lnot2}.
We may assume $[a,s] \in \ZZ_p$ for all $s \in S$. (Otherwise, \cref{S=4asNotZp} applies, because $b$~centralizes~$G'$.) Therefore $c \neq a$.
Let $L = (s_i)_{i=1}^r$ be a hamiltonian path from $\quot e$ to $\quot b$ in $\Cay \bigl( \quot{G}/\langle \quot a \rangle; S \sm \{a\} \bigr)$, such that $s_1 = c = s_r^{-1}$, and $L$ contains a path of the form $[\quot {gc^\epsilon}](c^{-\epsilon}, b^\delta, c^\epsilon)$\refnote{S>3somenontrivL}
 (for some $\delta,\epsilon \in \{\pm1\}$) that is vertex-disjoint from $\{\quot e, \quot c, \quot b, \quot{bc} \}$. Now let  $C = (L, a, L^{-1}, a)$. Then $C$ contains vertex-disjoint paths of the form
	$$ [\quot b](a), 
	\quad
	[\quot{ca}](c^{-1}, a, c),
	\quad
	[\quot {gc^\epsilon}](c^{-\epsilon}, b^\delta, c^\epsilon),
	\text{\quad and\quad}
	 [\quot {gab^\delta}](b^{-\delta})
	 .$$ 
\begin{itemize}
	\item Applying \cref{StandardAlteration} to $[\quot b](a)$ and $[\quot{ca}](c^{-1}, a, c)$ (so $s = c^{-1}$, $t = a$, $u = b$, and $h = ab$) will be called the ``$a$-transform\rlap.'' It multiplies the voltage by\refnote[15]{S>3somenontriva}
	$$\gamma_a = [b,a] [a,c^{-1}] .$$
	\item Applying \cref{StandardAlteration} to $[\quot {gc^\epsilon}](c^{-\epsilon}, b^\delta, c^\epsilon)$ and $[\quot {gab^\delta}](b^{-\delta})$ 
	(so $s = c^{-\epsilon}$, $t = b^\delta$, $u = a$, and $h = ga$)
will be called the ``$b$-transform\rlap.'' It multiplies the voltage by a conjugate of\refnote[15]{S>3somenontrivb}
	$$\gamma_b = [a,b] [c^{-\epsilon}, b] .$$
\end{itemize}

Since $[a,b], [a,c] \in \ZZ_p$ and  $[b,c] \notin \ZZ_p$ we know $\gamma_a \in \ZZ_p$ and $\gamma_b \notin \ZZ_p$. 
Also, we may also assume $\gamma_a$ is nontrivial (by replacing $b$ with~$b^{-1}$ if necessary). 
Therefore, the argument of \cref{PreciselyOne} applies.
Namely, choose $C'$ to be either $C$ or the $b$-transform of~$C$, such that $\voltage C'$ projects nontrivially to~$\ZZ_2$. 
Then choose $C''$ to be either $C'$ or the $a$-transform of~$C'$, such that $\voltage C''$ generates~$G'$, so \cref{FGL} applies.

\begin{subcase} \label{S>3bcent-alltriv} 
Assume $[c,b] \in \ZZ_p$ for all $c \in S$. 
\end{subcase}
Choose $c,d \in S$, such that $[c,d] \notin \ZZ_p$. Assuming that \cref{S=4asNotZp,S>3bcent-somenontriv} do not apply, we have
	$$ \text{$[s,t] \in \ZZ_p$ for all $s \in \{a,b\}$ and $t \in S$} .$$
Therefore, $c,d \notin \{a,b\}$, and the element $\gamma = [a,b][d^{-1},a]$ is in~$\ZZ_p$, and we may assume (by replacing $b$ with its inverse, if necessary) that $\gamma$ generates~$\ZZ_p$. 

Let $S_0 = \{a,b,d\}$, and choose a hamiltonian cycle~$C_0$ in $\Cay \bigl( \langle \quot{S_0} \rangle; S _0 \bigr)$ that contains the oriented paths $[\quot d](d^{-1}, a, d)$\refnote{4-bCentsG'C0}
 and $[\quot{ab}](a)$, and has at least two edges labelled $x^{\pm1}$, for every $x \in S_0$.
\Cref{StandardAlteration} (with $s = d^{-1}$, $t = a$, $u = b$, and $h = b$) provides a hamiltonian cycle $C_1$, such that $(\voltage C_0)^{-1}(\voltage C_1)$ is conjugate to~$\gamma$,\refnote{4-bCentsG'gamma}
 and therefore generates~$\ZZ_p$.  Furthermore, $C_1$ contains all of the oriented edges of~$C_0$ that are not in these two above-mentioned paths, so \fullcref{UsualConnSum}{even} applies (with $g = b$ and $t = d$).
\end{proof}

\begin{CASE} \label{Leftovers}
Assume that none of the preceding cases apply. 
\end{CASE}

\begin{proof}
This implies that:
	\begin{enumerate} \renewcommand{\theenumi}{\#\arabic{enumi}}
	\item \emph{$[a,b] \in \ZZ_p$.} (Otherwise, \cref{S=4asNotZp} applies.)
	\item \label{Leftovers-invert}
	\emph{If $s \in S$, and there exists $t \in S$, such that $t$~inverts~$G'$ and $\ZZ_p \subseteq \langle [s,t] \rangle$, then $s$~inverts~$G'$.}
(If $s$ does not invert~$G'$, then we see from \cref{S>3Assump} that $s$ centralizes~$G'$, so \cref{S>3bcent} applies with $s$ and~$t$ in the roles of $b$ and~$a$, respectively.)
	\item \emph{There exists $c \in S$, such that $c$ centralizes~$G'$.}
	(Otherwise, \cref{nocent} applies.)
	From \pref{Leftovers-invert}, we know $[a,c] \in \ZZ_2$. 
	\end{enumerate}


\begin{subcase} \label{abinZpnotZ2}
Assume $\langle [s,c] \rangle \neq \ZZ_2$, for some $s \in S \sm \{c\}$.
\end{subcase}
Suppose, for the moment, that $s$ centralizes~$G'$.
Then \cref{Cents->Homo} implies $\bigl[ a, [s,c] \bigr] = \bigl[ [a,s], [a,c] \bigr] = e$ (because $G'$ is abelian), so $[s,c]$ projects trivially to~$\ZZ_p$. Since $\langle [s,c] \rangle \neq \ZZ_2$, we conclude from this that $[s,c] = e$, so \cref{NormalEasy} applies. 

We may now assume $s$ does not centralize~$G'$, so there is no harm in assuming that $s = a$. Since \pref{Leftovers-invert} implies that $[a,c] \in \ZZ_2$, we see that $[a,c]$ must be trivial. Let $H = \langle S \sm \{c\} \rangle$. We may assume $\ZZ_2 \nsubseteq H$, for otherwise $H \normal G$, so \cref{Durnberger-commuting} applies with $s = c$ and $t = a$. Therefore, $[x,y] \in \ZZ_p$ for all $x,y \in S \sm \{c\}$, but there is some $d \in S \sm \{c\}$, such that $[c,d]$ projects nontrivially to~$\ZZ_2$. 

Similarly, we may assume $\ZZ_p \nsubseteq \langle S \sm \{a\} \rangle$, for otherwise $\langle S \sm \{a\} \rangle \normal G$, so \cref{Durnberger-commuting} applies with $s = a$ and $t = c$. This means $[x,y] \in \ZZ_2$ for all $x,y \in S \sm \{a\}$. In particular, since $b$ and~$d$ are in both $S \sm \{a\}$ and $S \sm \{c\}$, we must have $[b,d] \in \ZZ_2 \cap \ZZ_p = \{e\}$.

Choose a hamiltonian cycle~$C_0$ in $\Cay \bigl(  \quot H ;  S \sm \{c\}  \bigr)$ that contains the oriented paths $[\quot {d}](d^{-1}, b, d)$ and $[\quot{ab}](b)$.\refnote{abinZpnotZ2C0}
If we apply \cref{StandardAlteration} to these paths (so $s = d^{-1}$, $t = b$, $u = a$, and $h = a$), then the voltage is multiplied by a conjugate of $[b,a] \, [b,d^{-1}]$,\refnote{abinZpnotZ2voltage}
 which is a generator of~$\ZZ_p$ (since $[a,b]$ generates~$\ZZ_p$ and $[b,d]$ is trivial). Therefore, \fullcref{UsualConnSum}{notZ2} applies with $s = t = d$ and $u = a$.

\begin{subcase} \label{abinZp=Z2}
Assume $\langle [s,c] \rangle = \ZZ_2$, for all $s \in S \sm \{c\}$.
\end{subcase}
For convenience, let $\widehat G = G/\ZZ_2$ and $\widehat H = \langle \widehat S \sm \{\widehat c\} \rangle$. Then $|\widehat H'| = p$ is prime, so \cref{G'=p} provides a hamiltonian path~$L$ in $\Cay \bigl( \widehat H; S \sm \{c\} \bigr)$. Since $\widehat c$ is central in~$\widehat G$, there is a spanning subgraph of $\Cay(\widehat G ; S)$ that is isomorphic to the Cartesian product $L \mathbin\Box (\widehat c^{\ell-1})$, where $\ell = |\quot G : \langle \quot {S \sm \{c\}} \rangle |$. Since $|\widehat G|$ is even, it is easy to find a hamiltonian cycle~$C$ in $L \mathbin\Box (\widehat c^{\ell-1})$ \csee{ChenQuimpoEvenGrid}, and this yields a hamiltonian cycle~$\widehat C$ in $\Cay(\widehat G ; S)$. 

To complete the proof, we carry out a straightforward (and well-known) calculation to verify that $\voltage \widehat C$ is nontrivial, so \cref{FGL} applies.

If we view the Cartesian product $L \mathbin\Box (\widehat c^{\ell-1})$ as a grid of squares, then the interior of the hamiltonian cycle~$C$ is a union of squares of the grid.
Graph theoretically, this means $C$~is the connected sum of some number~$N$ of digons of the form $[g](t,t^{-1})$ (where $t \in S^{\pm1}$). 
Note that if $\mathcal{C}$ is an $r$-cycle (with $r \ge 2$), then $\mathcal{C} \connsum^s_t (t,t^{-1})$ is an $(r+2)$-cycle. Therefore, since the length of~$C$ is $|\widehat G|$, we have $2N = |\widehat G| \equiv 0 \pmod{4}$, so $N$~is even.

Now, each $4$-cycle in $L \mathbin\Box (\widehat c^{\ell-1})$ is of the form $[\widehat{g}](s^{-1}, t^{-1}, s, t)$, where one of $s$ and~$t$ is in $\{c^{\pm1}\}$, and the other is in $S^{\pm1} \sm \{c^{\pm1}\}$. This means that in any connected sum $\mathcal{C} \connsum^s_t [g](t,t^{-1})$, one of $s$ and~$t$ is in $\{c^{\pm1}\}$, and the other is in $S^{\pm1} \sm \{c^{\pm1}\}$. By the assumption of this \lcnamecref{abinZp=Z2}, we conclude that  $[s,t] = z$, where $z$~is the generator of~$\ZZ_2$. Therefore
	\begin{align*} 
	\voltage C
	&= \voltage \Bigl(  [\widehat{g_1}](t_1, t_1^{-1}) 
		\ \connsum_{t_2}^{s_2} \ [\widehat{g_2}](t_2, t_2^{-1}) 
		\ \connsum_{t_3}^{s_3} \ \cdots 
		\ \connsum_{t_{N}}^{s_{N}} \ [\widehat{g_N}](t_N, t_N^{-1}) 
		 \Bigr)
	\\&\equiv \prod\nolimits_{i = 2}^N [s_i, t_i]
		&& \begin{pmatrix} \text{\cref{VoltageOfConnSumModZp}} \\ \text{and $\voltage (t, t^{-1}) = e$} \end{pmatrix}
	\\&= z^{N-1}
	\\&\not\equiv e
	\qquad \pmod{\ZZ_p} 
		&& \text{($N-1$ is odd)}
	. \qedhere \end{align*}
\end{proof}


\AtEndDocument{

\newpage

\refereeheaders

\addtocontents{toc}{\medskip}

\begin{appendix}

\section{Notes to aid the referee}

\thispagestyle{plain}

\vfill

\begin{aid} \label{BasicVoltageRef}
Write $C = [N v] (s_i)_{i=1}^n$.

\medskip

\pref{BasicVoltage-welldef}
Suppose $C$ has another representation: $C = [Nw](t_j)_{j=1}^n$. Since $Nw$ is a vertex on~$C$, there is some~$\ell$, such that $N v s_1 s_2 \cdots s_\ell = Nw$. Then $t_j = s_{j-\ell}$ for all~$j$ (with subscripts read modulo~$n$).
Also, letting $g = (s_1 s_2\cdots s_\ell)^{-1}$, we have $Nwg = Nv$, so (since $N$ is normal) there is some $h \in N$, such that $wg = vh$. Therefore
	\begin{align*}
	\textstyle {\vphantom{\Bigl|}}^w \! {\Bigl( \prod_{j=1}^n t_j \Bigr)}
	&\textstyle= {\vphantom{\Bigl|}}^w \! {\Bigl( \bigl( \prod_{i= \ell + 1}^n s_i  \bigr)  \bigl( \prod_{i=1}^\ell s_i  \bigr) \Bigr)}
	\\&\textstyle= {\vphantom{\Bigl|}}^w \! {\Bigl( g \, \bigl( \prod_{i=1}^\ell s_i  \bigr) \bigl( \prod_{i= \ell + 1}^n s_i  \bigr)  g^{-1} \Bigr)}
	\\&\textstyle= {\vphantom{\Bigl|}}^w \! {\Bigl( g \, \bigl( \prod_{i=1}^n s_i  \bigr)  g^{-1} \Bigr)}
	\\&\textstyle= {\vphantom{\Bigl|}}^{wg} \! {\Bigl( \prod_{i=1}^n s_i \Bigr)}
	\\&\textstyle= {\vphantom{\Bigl|}}^{vh} \! {\Bigl( \prod_{i=1}^n s_i \Bigr)}
	\\&\textstyle= {\vphantom{\Bigl|}}^{v} \! {\Bigl( \prod_{i=1}^n s_i \Bigr)}
	&& \begin{pmatrix}
	\text{since $N$ is abelian, we have} \\ \text{${}^h x = x$ for all $x \in N$} \end{pmatrix}
	. \end{align*}
This means that the two representations $[Nw](t_j)_{j=1}^n$ and $[N v] (s_i)_{i=1}^n$ yield the same value for the voltage, so the voltage is well defined.

\medskip

\pref{BasicVoltage-translate} 
We have $gC = [N gv] (s_i)_{i=1}^n$, so
	\begin{align*}
	\textstyle
	\voltage gC
	 = {}^{gv} \! \bigl( \prod_{i=1}^n s_i  \bigr) 
	 = {\vphantom{\Bigl|}}^{g} \! \Bigl( {}^v \! \bigl( \prod_{i=1}^n s_i  \bigr)  \Bigr)
	 = {\vphantom{\Bigl|}}^{g} \! \Bigl(\voltage C  \Bigr)
	.\end{align*}

\medskip

\pref{BasicVoltage-reverse} 
We have $-C = [N v] (s_n^{-1}, s_{n-1}^{-1}, \ldots, s_1^{-1} )$, so
	\begin{align*}
	\textstyle
	\voltage (-C)
	 = {}^v (s_n^{-1} s_{n-1}^{-1} \cdots s_1^{-1} )
	 = {}^v \bigl( (s_1 s_2 \cdots s_n)^{-1} \bigr)
	 = \bigl( {}^v ( s_1 s_2 \cdots s_n) \bigr)^{-1}
	 = \bigl(\voltage C  \bigr)^{-1}
	.\end{align*}
\end{aid}

 \begin{aid} \label{ChenQuimpoOddEndptvoltage}
We have
 	\begin{align*}
	 \voltage (s_i)_{i=1}^r
	&= \prod_{i=1}^{k\ell/2} \bigl( (\voltage L) \, t_{2i-1} \, (\voltage L)^{-1} \, t_{2i}  \bigr) t_{k\ell}^{-1}
	\\&= \prod_{i=1}^{k\ell/2} \bigl( t_{2i-1}  t_{2i}  \bigr) t_{k\ell}^{-1}
		&& \text{($H$ is abelian)}
	\\&= \prod_{i=1}^{k\ell-1} t_i
	\\&= x^p y^q
	. \end{align*}
\end{aid}

\begin{aid} \label{StandardAlterationPf}

\setcounter{case}{0}

\begin{case}
Assume that $C_0$ contains $[\quot{h}](t)$. 
\end{case}
Construct $C_1$ by replacing:
	\begin{itemize}
	\item the oriented edge $[\quot{h}](t)$ with the oriented path $[\quot{h}](u^{-1}, t, u)$,
	and
	\item the oriented path $[\quot{h s^{-1} u^{-1}}](s,t,s^{-1})$ with the oriented edge $[\quot{h s^{-1} u^{-1}}](t)$.
	\end{itemize}
	\centerline{$C_0$: \lower3cm\hbox{\includegraphics[scale=0.8]{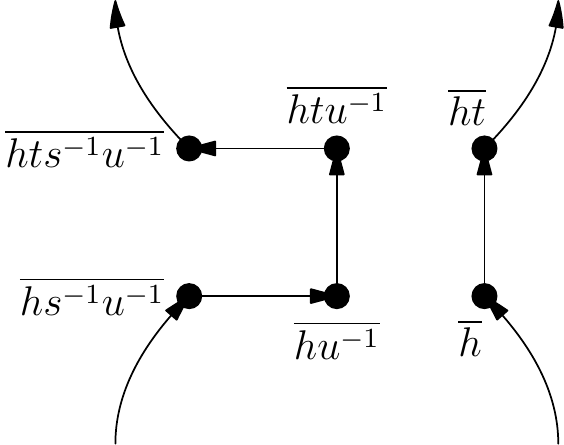}} \hfil $C_1$: \lower3cm\hbox{\includegraphics[scale=0.8]{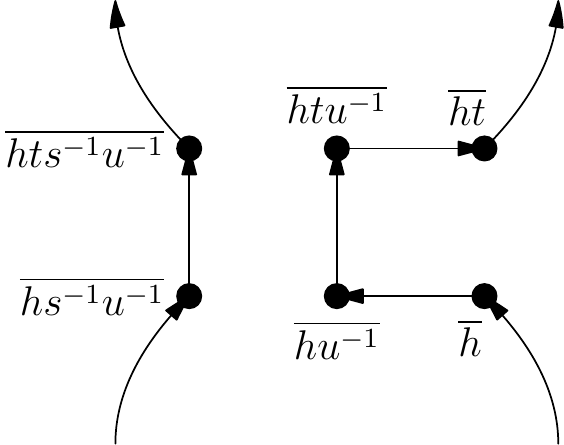}}}
	
To calculate the voltage of~$C_1$, write $C_0 = [\quot{h}](s_1,\ldots,s_n)$. Then $s_1 = t$ and there is some~$\ell$ with $\quot{s_1}\cdots \quot{s_\ell} = \quot{u}^{-1}$, so $(s_\ell, s_{\ell+1}, s_{\ell+2}) = (s,t,s^{-1})$, and we have
	$$ C_1 = [\quot{h}]\bigl( u^{-1}, t, u, \,  (s_i)_{i=2}^{\ell-1} , \, t , (s_i)_{i=\ell+3}^n \bigr) .$$
Note that if we let $\pi = \prod_{i=1}^\ell s_i$, then $\pi \equiv u^{-1} \pmod{N}$, so ${}^\pi x = x^u$ for all $x \in N$ (since $N$ is commutative). Therefore
	\begin{align*}
	\bigl(\voltage C_1 \bigr)^h
	&= (u^{-1} t u) \left( \prod_{i=2}^{\ell-1} s_i \right) t  \left( \prod_{i=\ell+3}^n s_i \right) 
	\\&= (u^{-1} t u) t^{-1} \left( \prod_{i=1}^{\ell} s_i \right) s^{-1} \   t  \  s t^{-1} \left( \prod_{i=\ell+1}^n s_i \right) 
	\\&= [u, t^{-1}] \left( \prod_{i=1}^{\ell} s_i \right)  [s, t^{-1}] \left( \prod_{i=\ell+1}^n s_i \right) 
	\\&= [u, t^{-1}]  \  {}^\pi[s, t^{-1}] \left( \prod_{i=1}^{\ell} s_i \right)\left( \prod_{i=\ell+1}^n s_i \right) 
	\\&= [u, t^{-1}]  [s, t^{-1}]^u \, (\voltage C_0)^h 
	. \end{align*}

\begin{case}
Assume that $C_0$ contains $[\quot{ht}](t^{-1})$. 
\end{case}
Construct $C_1$ by replacing:
	\begin{itemize}
	\item the oriented edge $[\quot{ht}](t^{-1})$ with the oriented path $[\quot{ht}](u^{-1}, t^{-1}, u)$,
	and
	\item the oriented path $[\quot{h s^{-1} u^{-1}}](s,t,s^{-1})$ with the oriented edge $[\quot{h s^{-1} u^{-1}}](t)$.
	\end{itemize}
	\centerline{$C_0$: \lower3cm\hbox{\includegraphics[scale=0.8]{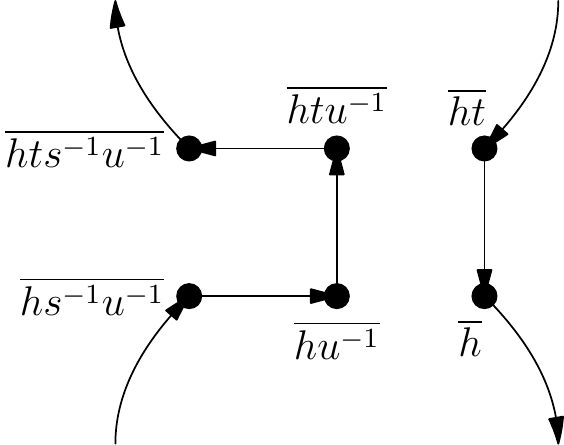}} \hfil $C_1$: \lower3cm\hbox{\includegraphics[scale=0.8]{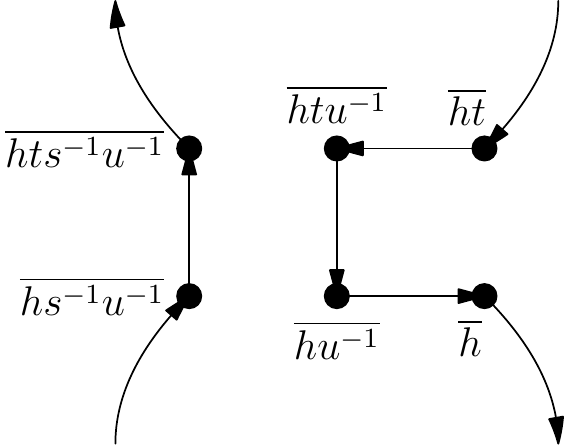}}}
	
To calculate the voltage of~$C_1$, write $C_0 = [\quot{ht}](s_1,\ldots,s_n)$. Then $s_1 = t^{-1}$ and there is some~$\ell$ with $\quot{s_1}\cdots \quot{s_\ell} = \quot{t^{-1}u^{-1}}$, so $(s_\ell, s_{\ell+1}, s_{\ell+2}) = (s,t,s^{-1})$, and we have
	$$ C_1 = [\quot{ht}]\bigl( u^{-1}, t^{-1}, u, \,  (s_i)_{i=2}^{\ell-1} , \, t , (s_i)_{i=\ell+3}^n \bigr) .$$
Then
	\begin{align*}
	\bigl(\voltage C_1 \bigr)^{ht}
	&= (u^{-1} t^{-1} u) \left( \prod_{i=2}^{\ell-1} s_i \right) t  \left( \prod_{i=\ell+3}^n s_i \right) 
	\\&= (u^{-1} t^{-1} u) t \left( \prod_{i=1}^{\ell} s_i \right) s^{-1} \   t  \  s t^{-1} \left( \prod_{i=\ell+1}^n s_i \right) 
	\\&= [u,t] \left( \prod_{i=1}^{\ell} s_i \right)  [s, t^{-1}] \left( \prod_{i=\ell+1}^n s_i \right) 
	\\&= [u, t]  \  {}^\pi[s, t^{-1}] \left( \prod_{i=1}^{\ell} s_i \right)\left( \prod_{i=\ell+1}^n s_i \right) 
	\\&= [u, t]  [s, t^{-1}]^{tu} \, (\voltage C_0)^{ht} 
	. \end{align*}
Conjugating both sides by~$t^{-1}$ yields
	$$ \bigl(\voltage C_1 \bigr)^{h} = [u, t]^{t^{-1}} \   [s, t^{-1}]^u \, (\voltage C_0)^{h}  .$$
Now note that
	$$  [u, t]^{t^{-1}} = t (u^{-1} t^{-1} ut) t^{-1} = t u^{-1} t^{-1} u = [t^{-1}, u] .$$
%
%
%
%
%
%
%
%
%
%
%
%
%
%
%
%
\end{aid}

\begin{aid} \label{KWUsesFGL}
Case~4.5 of \cite{KeatingWitte} (on page 95) considers certain groups of order~$27$.
Near the start of \cite[\S4]{KeatingWitte} (on page 92), it is stated that ``In every case except 4.5, we use the Factor Group Lemma~2.3 on $G/G'$\rlap.'' Replacing $G$ with~$\widehat G$, this means there is a hamiltonian cycle in $\Cay(\widehat G/\widehat G'; S)$ whose voltage generates~$\widehat G'$ (unless $|\widehat G| = 27$, which we have ruled out).
\end{aid}

\begin{aid} \label{GbarDivbyq}
Fix some $\widehat g \in \widehat G \sm Z(\widehat G)$, and define $\varphi \colon \widehat G \to \widehat G'$ by $\varphi(x) = [x,g]$. From \cref{Cents->Homo}, we know that $\varphi$ is a homomorphism. Since $\widehat g \notin Z(\widehat G)$, this homomorphism is nontrivial, so it must be surjective (since $|\widehat G'| = q$ is prime). Therefore $|\widehat G : \ker \varphi| = |\widehat G'| = q$. Also, we have $\widehat G' \subseteq Z(\widehat G) \subseteq \ker \varphi$. So $|\widehat G : \widehat G'|$ is divisible by~$q$.
\end{aid}

\begin{aid} \label{MinEnough}
If $T$ is a subset of~$S$, then it is obvious that $\Cay(G;T)$ is a subgraph of $\Cay(G;S)$. Therefore, in order to show that every connected Cayley graph on~$G$ has a hamiltonian cycle, it suffices to consider only the irredundant generating sets.
\end{aid}

\begin{aid} \label{ZpZ}
Suppose $\ZZ_p \cap Z(G)$ is nontrivial. Since $\ZZ_p$ has prime order, this implies $\ZZ_p \subseteq Z(G)$. However, $\ZZ_2$ is a normal subgroup that has no automorphisms, so $\ZZ_2 \subseteq Z(G)$. Therefore, $Z(G)$ contains both $\ZZ_2$ and $\ZZ_p$, and therefore contains all of~$G'$. This contradicts the fact that $G$ is not nilpotent.
\end{aid}

\begin{aid} \label{CommutatorGenZp}
If $\ZZ_p \nsubseteq \langle [a,b] \rangle$, then, since $ [a,b] \in G' = \ZZ_2 \times \ZZ_p$, we must have $[a,b] \in \ZZ_2$. If this is true for all $b \in S$, then $[a,g] \in \ZZ_2$ for all $g \in G$ (because $\langle S \rangle = G$ and $\ZZ_2 \normal G$). In particular, $[a, \ZZ_p] \subseteq \ZZ_2$. However, we also have $[a, \ZZ_p] \subseteq \ZZ_p$, because $\ZZ_p \normal G$. Therefore, $[a, \ZZ_p] \subseteq \ZZ_2 \cap \ZZ_p = \{e\}$. This contradicts \pref{aDefn}.
\end{aid}

\begin{aid} \label{Cents->HomoPf}
We have
	$$ xy \, [xy,z] = (xy)^z = x^z \, y^z = x \, [x,z] \cdot y \, [y,z] = xy \, [x,z] \, [y,z] .$$
\end{aid}

\begin{aid} \label{DivbypAid}
Let $\widehat G = G/\ZZ_2$.
We have $ [ \widehat x,  \widehat y^p ] = [ \widehat x,  \widehat y]^p = \widehat e$, so $\widehat y^p \in Z( \widehat G)$. If $p \nmid |y|$, this implies $\widehat y \in Z( \widehat G)$, which contradicts the fact that $[ \widehat x,  \widehat y]$ is nontrivial (because $\ZZ_p \subseteq \langle [x,y] \rangle$. 
\end{aid}

\begin{aid} \label{Cent->DividesPf}
Let $\widehat G = G/\ZZ_p$. By assumption, there exists $s \in S_0$, such that $\langle [\widehat g, \widehat s] \rangle = \widehat{\ZZ_2} = \widehat {G'}$. Every element of~$\widehat G$ centralizes $\ZZ_2 = \widehat G'$, so \cref{Cents->Homo} tells us that the map $\varphi(x) = [x,s]$ is a homomorphism from $\langle \widehat g, \widehat {S_0} \rangle$ to $\ZZ_2$. Since $\ZZ_2 \nsubseteq \langle S_0 \rangle'$ (and $s \in S_0$), we know $\widehat{S_0}$ is contained in the kernel of~$\varphi$. But $\langle \varphi(\widehat g) \rangle = \ZZ_2$, so the kernel of~$\varphi$ is a subgroup of index~$2$ in $\langle \widehat g, \widehat {S_0} \rangle$. Therefore
	\begin{align*}
	\frac{|\langle \quot{g} , \quot{S_0}\rangle}{| \langle \quot{S_0}\rangle|}
	&= 	\frac{|\langle \quot{g} , \quot{S_0}\rangle}{|\ker \varphi|} \cdot \frac{|\ker \varphi|}{ \langle \quot{S_0}\rangle|}
	= 	2 \cdot \frac{|\ker \varphi|}{| \langle \quot{S_0}\rangle|}
	\qquad
	\text{is even}
	. \end{align*}
\end{aid}

\begin{aid} \label{ConnSumAid}
The connected sum $C_1 \connsum_t^s C_2$ joins $C_1$ and~$C_2$ into a single large cycle by replacing the two white edges labelled $t$ and~$t^{-1}$ with the two black edges labelled $s$ and~$s^{-1}$.

	\centerline{\includegraphics{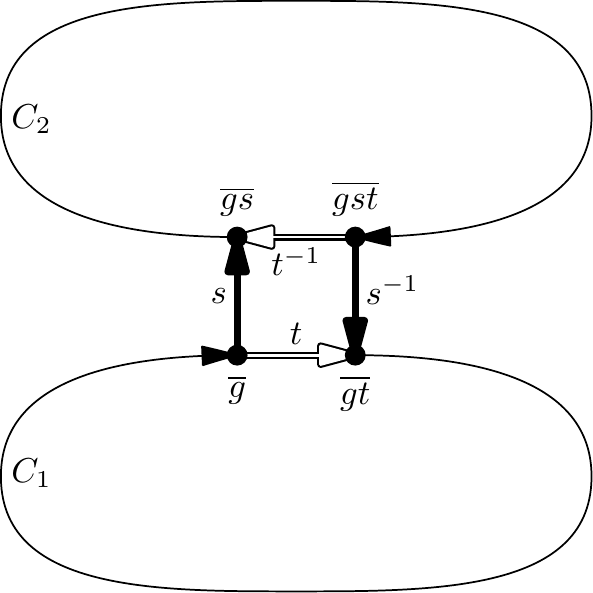}}
%
%
%
%
%
%
%
%
%
%
\end{aid}

\begin{aid} \label{UsualConnSumWitht}
Let $S_0^* = S_0 \cup \{t\}$. We verify the hypotheses of \cref{UsualConnSum} with $S_0^*$ in the role of~$S_0$ and with $t$ in the role of~$s$.
	\begin{itemize}
	\item $(\voltage C_0)^{-1} (\voltage C_1)$ is a nontrivial element of~$\ZZ_p$ (by assumption).
	\item By construction of the connected sum, $C_0'$ and $C_1'$ both contain the oriented edge $[\quot g](t)$.
	\item By construction of the connected sum (and the fact that $s_1 = s$), $C_0'$ contains the oriented edges $[\quot g](t)$ and $[\quot gs](t^{-1})$. Also, for every $x \in S_0$, $C_0$ contains at least two edges $[\quot v](x^{\pm1})$ and $[\quot w](x^{\pm1})$ that are labelled either $x$ or~$x^{-1}$. Then the subgraph of~$C_0'$ induced by~$C_0$ contains at least one of these edges, and the subgraph of~$C_0'$ induced by~$t^{n-1}C_0$ contains either $[\quot {t^{n-1}v}](x^{\pm1})$ or $[\quot {t^{n-1}w}](x^{\pm1})$; so $C_0'$ contains at least two edges that are labelled either $x$ or~$x^{-1}$.
	\item We know $c \notin S_0$ and $\ZZ_2 \subseteq \langle [c,t] \rangle$. The latter implies $c \neq t$, so $c \notin S_0 \cup \{t\} = S_0^*$.
	\item We are letting $s = t$.
	\item By assumption, either
	\begin{enumerate}
	\item there exists $u \in S \sm \{c\}$, such that $\ZZ_2 \nsubseteq \langle [u,c] \rangle$,
	or
	\item $|\quot{G} : \langle \quot{S_0}, \quot t \rangle|$ is even.
	\end{enumerate}
	The first condition makes no mention of~$S_0$, $s$, or~$t$, so remains true with $S_0^*$ in the role of~$S_0$ and with $t$ in the role of~$s$. Since $S_0^* = S_0 \cup \{t\}$, we have $S_0^* \cup \{t\} = S_0 \cup \{t\}$. So the second condition tells us that $|\quot{G} : \langle \quot{S_0^*} \rangle|$ is even. 
	\end{itemize}
\end{aid}

\begin{aid} \label{t1=t}
Suppose $|\quot{G} : \langle \quot{S_0} \rangle| = 2$, so $C = C_0 \connsum_{t_1}^c -cC_0$. Then, calculating mod~$\ZZ_p$, we have
	\begin{align*}
	 0
	&\not\equiv \voltage C 
	\\&\equiv  \voltage C_0 \cdot \voltage (-c C_0) \cdot [c,t_1] 
		&& \text{(\cref{VoltageOfConnSumModZp})}
	\\&\equiv  \voltage C_0 \cdot \voltage C_0 \cdot [c,t_1] 
		&& \text{(\fullcref{BasicVoltage}{translate})}
	\\&\equiv  [c,t_1] 
		&& \text{($x^2 \in \ZZ_p$ for all $x \in G'$, since $G' = \ZZ_2 \times \ZZ_p$)}
	. \end{align*}
By the definition of~$u$, this implies $u \neq t_1$. So $t_1 = t$.
\end{aid}

\begin{aid} \label{Cans}
The choice of the oriented edge $[\quot{g_i}](t_i)$ of~$C_0$ that is used in the connected sum is arbitrary, except that $t_1$ was chosen to make the projection of $\voltage C$ to~$\ZZ_2$ is nontrivial. Therefore, if $n > 1$, then we may use any edge that we want in order to make the connected sum $(-1)^{n-1} \pi_{n-1}C_0 \connsum_s^{s_n} (-1)^n\pi_n C_0$.  

So we may now assume $n = 1$. This means $|\quot{G} : \langle \quot{S_0} \rangle| = 2$. Therefore, by assumption, we must have $s = t$. Also, as was mentioned in the proof, we must have $t_1 = t$. So $t_1 = s$. Therefore, we may assume that the connected sum $C_0 \connsum_{t_1}^c - c C_0$ is relative to the oriented edge $[\quot {gc}](s)$ of $c C_0$ that is also in~$cC_1$.
\end{aid}

\begin{aid} \label{udNontriv}
Suppose there exist $d \in S \sm S_0$ and $u \in S \sm \{d\}$, such that $[u,d]$ projects trivially to $\ZZ_2$. Note that, by the assumption of this \lcnamecref{AllucNontriv}, we must have $d \neq c$.
\begin{enumerate}
	\item By applying the assumption of this \lcnamecref{AllucNontriv} with $d$ in the place of~$u$ (and noting that $d \neq c$), we see that $\ZZ_2 \subseteq [d,c]$.
	\item \label{udNontriv-Z2notin}
	By the choice of~$u$, we know that $\ZZ_2 \nsubseteq \langle [u,d] \rangle$.
	\end{enumerate}
Therefore, the hypotheses of the \lcnamecref{UsualConnSum} are satisfied with $d$ and~$c$ in the roles of $c$ and~$t$, respectively. Furthermore \pref{udNontriv-Z2notin} tells us that \cref{UsualConnSumnotZ2Case} applies.
\end{aid}

\begin{aid} \label{CentralizeOrNot}
Suppose $\langle S \sm \{t\} \rangle$ contains~$\ZZ_p$. Note that $\langle S \sm \{t\} \rangle$ also contains~$s$. Therefore, we have
	\begin{align*}
	\langle S \sm \{t\}, \ZZ_2 \rangle 
	&= \langle S \sm \{t\} , s, \ZZ_p, \ZZ_2 \rangle
	= \langle S \sm \{t\} , s, G' \rangle
	= \langle S \sm \{t\} , s, \gamma\rangle
	\\&\supseteq \langle S \sm \{t\} , s \gamma \rangle
	= \langle S \sm \{t\} , t \rangle
	= \langle S \rangle
	= G 
	. \end{align*}
So \cref{Z2inFrattini} tells us that $\langle S \sm \{t\} \rangle = G$.
This contradicts the irredundance of~$S$, so we conclude that $\langle S \sm \{t\} \rangle$ does not contain~$\ZZ_p$. A similar argument shows that $\langle S \sm \{s\} \rangle$ does not contain~$\ZZ_p$.

Suppose $u$ is an element of~$S \sm \{s,t\}$ that does not centralize~$\ZZ_p$. Then $u$~is not in the center of $G/\ZZ_2$, so there is some $x \in S$, such that $[x,u] \notin \ZZ_2$. We may assume (perhaps after interchanging $s$ and~$t$) that $x \neq s$, so $x \in S \sm \{s\}$. Then $u$ and~$x$ are both in $S \sm \{s\}$, so the commutator subgroup of $\langle S \sm \{s\} \rangle$ is not contained in~$\ZZ_2$. Since $G' = \ZZ_2 \times \ZZ_p$, this implies that the commutator subgroup of $\langle S \sm \{s\} \rangle$ contains~$\ZZ_p$. So $\langle S \sm \{s\} \rangle$ contains~$\ZZ_p$. This contradicts the preceding paragraph, so we conclude that every element of~$S \sm \{s,t\}$ that centralizes~$\ZZ_p$.
\end{aid}

\begin{aid} \label{sNotCentralize}
Suppose $s$ centralizes~$\ZZ_p$. Since $t = s \gamma$ and it is obvious that $\gamma$ centralizes~$\ZZ_p$ (because $\gamma \in G' = \ZZ_2 \times \ZZ_p$), we conclude that $t$~also centralizes~$\ZZ_p$. From the conclusion of the preceding paragraph, we conclude that every element of~$S$ centralizes~$\ZZ_p$. Since $S$ generates~$G$, this implies that every element of~$G$ centralizes~$\ZZ_p$. This contradicts \pref{aDefn}, so we conclude that $s$ does not centralize~$\ZZ_p$. A similar argument shows that $t$ does not centralize~$\ZZ_p$. 
\end{aid}

\begin{aid} \label{Startt}
Since $n$ is even, we may let
	$$ C = \bigl( t, (t^{m-2}, s_{2i-1}, t^{-(m-2)}, s_{2i})_{i=1}^{n/2} \#, t^{-1}, (s_{n-j}^{-1})_{j=1}^{n-1} \bigr) .$$
	\centerline{\includegraphics{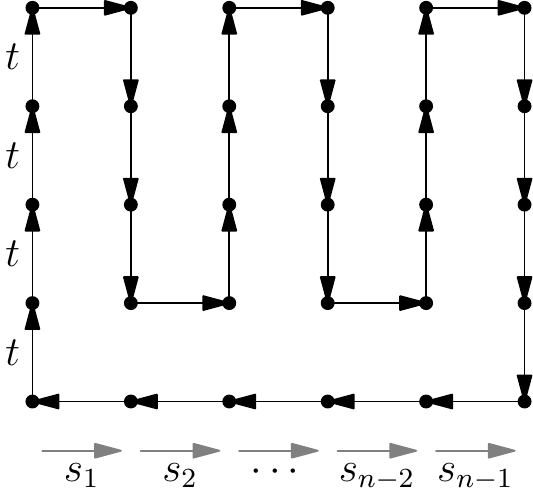}}
%
%
%
%
%
%
%
%
%
%
\end{aid}

\begin{aid} \label{MinGenSet}
Since $t \in S \sm \{s\}$ and $\quot t = \quot s$, we have $\quot{S \sm \{s\}} = \quot S$. Therefore $S \sm \{s\}$ generates~$\quot G$.

Now, we claim that $S \sm \{s\}$ is an irredundant generating set of~$\quot G$.
If not, then some proper subset~$T$ of $S \sm \{s\}$ generates~$\quot G$. This means $\langle T, G' \rangle = G$. From \cref{Z2inFrattini}, we conclude that $\langle T, \ZZ_p \rangle = G$. 
Since $\ZZ_p \normal G$, this implies $\langle T \rangle \ZZ_p = G$, so $|G : \langle T \rangle| = p$~is prime. Therefore, if we choose $x$ to be any element of $S \sm \langle T \rangle$, then $\langle T, x \rangle = G$. However, $T \cup \{x\}$ cannot be all of~$S$, because 
	$$ |T \cup \{x\}| \le |T| + 1 \le \bigl( |S| - 2 \bigr) + 1 < |S| .$$
This contradicts the irredundance of~$S$. 
\end{aid}

\begin{aid} \label{t=2HamCyc}
It is obvious that $C_0\#$ is a hamiltonian path in $\Cay\bigl(\quot G ; S \sm \{s\} \bigr)$.
	$$ \centerline{\raise1.3cm\hbox{$C_0\#$:} \quad \includegraphics{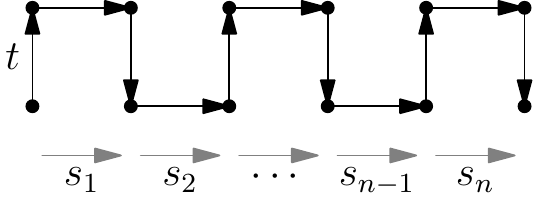}} $$
%
%
%
%
%
%
%
%
%
%
Therefore, we need only verify that $\quot{t_1} \, \quot{t_2} \cdots \quot{t_{2n}} = \quot e$. We have
	\begin{align*}
	 \quot{t_1} \, \quot{t_2} \cdots \quot{t_{2n}}
	&= \quot t \, \quot{s_1} \ \quot t \, \quot{s_2} \cdots \quot t \, \quot{s_n} 
	\\&= \quot t ^n \, \quot{s_1 s_2 \cdots s_n} 
	\\&= \quot{s_1 s_2 \cdots s_n} 
	, \end{align*}
so we wish to show $\quot{s_1 s_2 \cdots s_n}$ is trivial.

Suppose $\quot{s_1 s_2 \cdots s_n}$ is nontrivial.
Since $C_0 = (s_i)_{i=1}^n$ is a (hamiltonian) cycle in the graph $\Cay \bigl( \quot{G}/\langle \quot t \rangle ; S \sm \{s,t\}  \bigr)$, we know $\quot{s_1 s_2 \cdots s_n} \in \langle \quot t \rangle$. Therefore,  $\quot{s_1 s_2 \cdots s_n}$ must be~$\quot t$ (since this is the only nontrivial element of $\langle \quot t \rangle$). However, we also know $\quot{s_1 s_2 \cdots s_n} \in \quot{S \sm \{s,t\}}$ (because each $s_i$ is in $S \sm \{s,t\}$), so this implies that $\quot t \in \langle \quot{S \sm \{s,t\}} \rangle$. Therefore
	\begin{align*}
	\langle \quot{S \sm \{s,t\}} \rangle
	&= \langle \quot{S \sm \{s,t\}} , \quot t \rangle
	\\&= \langle \quot{S \sm \{s,t\}} , \quot s, \quot t \rangle
		&& \text{(because $\quot s = \quot t$)}
	\\&= \langle \quot{S} \rangle
	\\&= \quot G
	. \end{align*}
This contradicts the fact that $S \sm \{s\}$ is an irredundant generating set of~$\quot G$.
\end{aid}

\begin{aid} \label{VoltC0inZ2}
Since $C_0$ is a hamiltonian cycle in $\Cay\bigl(\quot G ; S \sm \{s\} \bigr)$, we know $\voltage C_0 \in G' = \ZZ_2 \times \ZZ_p$.  

On the other hand, it was pointed out in the first paragraph 
of the proof of this \lcnamecref{s=t} that $\langle S \sm \{s\} \rangle$ does not contain~$\ZZ_p$. Since $\voltage C_0 \in \langle S \sm \{s\} \rangle$, we conclude that $\ZZ_p \nsubseteq \langle \voltage C_0 \rangle$. So $\voltage C_0 \in \ZZ_2$.
\end{aid}

\begin{aid} \label{WellKnownCycle}
$$\includegraphics{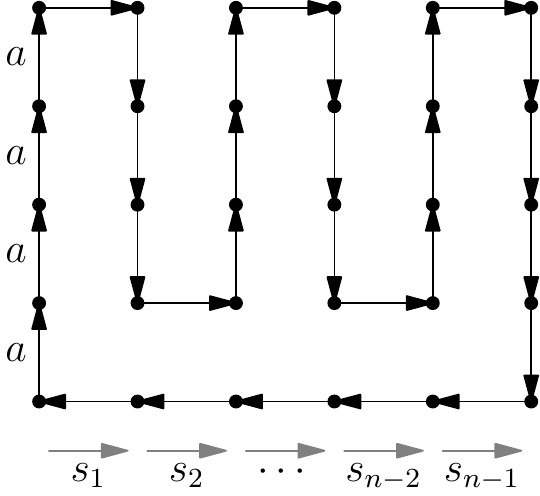}$$
%
%
%
%
%
%
%
%
%
%
\end{aid}

\begin{aid} \label{amCent}
Since $\widehat G' = \ZZ_2$, we have $\widehat G' \subseteq Z(\widehat G\,)$, which implies $[x,yz] [x,y] [x,z]$ for all $x,y,z \in \widehat G$. Therefore $[x, \widehat a^{m-2}] = [x, \widehat a]^{m-2}$. Since $m-2$~is even and $[x, \widehat a] \in \widehat G' = \ZZ_2$, this implies $[x, \widehat a^{m-2}]$ is trivial. Since $x$ is an arbitrary element of~$\widehat G$, this implies $\widehat a \in Z(\widehat G \,)$.
\end{aid}

\begin{aid} \label{generic(aS)=GEndpt}
We have $[a^\delta, b^{-1}] = [b^{-1}, a^\delta]^{-1}$. Therefore:
	\begin{itemize}
	\item Calculating modulo~$\ZZ_p$, we have 
		$$ [a^\delta, b^{-1}]^{a^\delta} \equiv [a^\delta, b^{-1}] = [b^{-1}, a^\delta]^{-1} ,$$
since $a$~centralizes~$\ZZ_2$. Therefore $[b^{-1}, a^\delta] [a^\delta, b^{-1}]^{a^\delta}$ is trivial modulo~$\ZZ_p$. In other words, $[b^{-1}, a^\delta] [a^\delta, b^{-1}]^{a^\delta} \in \ZZ_p$.
	\item We have $[a^\delta, b^{-1}]^{a^\delta} \neq [b^{-1}, a^\delta]^{-1}$, since $a^\delta = a^{\pm1}$ does not centralize~$\ZZ_p$. This implies $[b^{-1}, a^\delta] [a^\delta, b^{-1}]^{a^\delta}$ is nontrivial.
	\end{itemize}
Combining these two observations tells us that $[b^{-1}, a^\delta] [a^\delta, b^{-1}]^{a^\delta}$ is a generator of~$\ZZ_p$.
\end{aid}

\begin{aid} \label{cEven}
By assumption, $a$ is in the center of $G/\ZZ_p$, so $\langle a, \ZZ_p \rangle \normal G$. Let $\widehat G = G/\langle a, \ZZ_p \rangle$. Since $\ZZ_2 \subseteq \langle [\widehat c, \widehat d] \rangle$, \cref{Cent->Divides} tells us that $| \quot c |$ is even.
\end{aid}

 \begin{aid} \label{sk=c}
 Let $\widehat H = \langle \quot S \sm \{\quot d\} \rangle / \langle \quot a \rangle$, and let $w = |\widehat c|$. It has been pointed out that the image of~$c$ in $\quot G/ \langle \quot a \rangle$ has even order, which means that $w$~is even. 
   
Suppose $\widehat b \notin \langle \widehat c \rangle$. We may let $(t_j)_{j=1}^{\ell}$ be a hamiltonian path in $\Cay \bigl( \widehat H / \langle \widehat c \rangle ; S \sm \{c,d\} \bigr)$, such that $t_1 = b$. Then we may take $(s_i)_{i=1}^n$ to be the following hamiltonian cycle:
	$$ \includegraphics{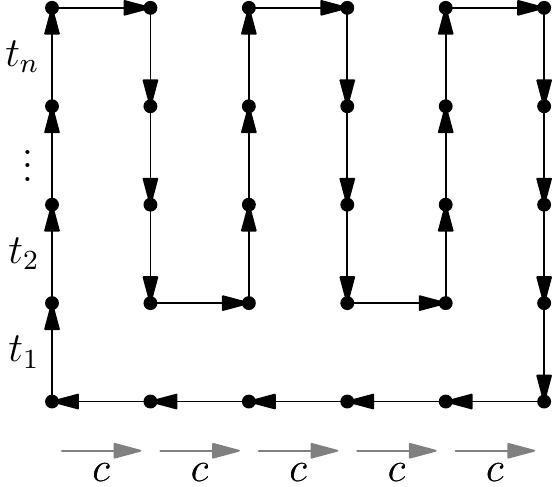} $$
%
%
%
%
%
%
%
%
%
%
%

If $\langle \widehat c \rangle = \widehat H$, then we may write $\widehat b = \widehat c^q$ with $1 \le q \le w - 1$, so we let 
	$$(s_i)_{i=1}^n = (b, c^{-(q-1)}, b, c^{w - q -1}) .$$

We now assume $\widehat b \in \langle \widehat c \rangle \neq \widehat H$. If $\widehat b = \widehat c$, then we modify the above-pictured hamiltonian cycle, by replacing a single occurrence of $c$ with~$b$. Otherwise, we write $\widehat b = \widehat c^{-q}$ with $1 \le q \le w - 2$, and replace the path $(c^{-(w-1)})$ at the end of the above-pictured hamiltonian cycle with $(b, c^{q-1}, b, c^{-(w-q-2)})$.
 \end{aid}

\begin{aid} \label{n=r=2aid}
$$ \includegraphics{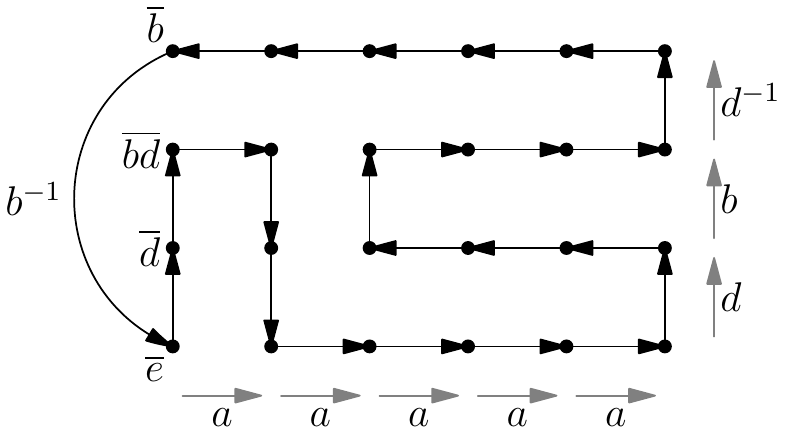} $$
\end{aid}
%
%
%
%
%
%
%
%
%
%
%
%

\begin{aid} \label{ba2}
It is a general group-theoretic fact that if $x$ does not invert $[x,y]$, then $x^2$ does not commute with~$y$. This is because
	$$ y^{x^2} = (y^x)^x = \bigl( y [y,x] \bigr)^x = y^x \, [y,x]^x = y \, [y,x] \,[y,x]^x ,$$
so $y^{x^2} = y$ iff $[y,x]^x = [y,x]^{-1}$.

Let $\widehat G = G/\ZZ_2$.
Since $|\quot a| = 3$ (and $a$ does not centralize~$G'$), we know that $a$ acts on~$\ZZ_p$ via an automorphism~$\varphi$ of order~$3$.  So $\widehat a$ does not invert $[\widehat a, \widehat b]$ (because $[\widehat a, \widehat b]$ is a generator of~$\ZZ_p$). Therefore, the general fact tells us that $\widehat a^2$ does not commute with $\widehat b$, so $[\widehat a^2,\widehat b]$ is a generator of~$\ZZ_p$. Since $a^2$ acts on~$\ZZ_p$ via $\varphi^2$, which is an automorphism of order~$3$, we know that $\widehat a^2$ does not invert $[\widehat a^2,\widehat b]$. 
So the general fact tells us that $(\widehat a^2)^2$ does not commute with~$\widehat b$. This means $b^{(a^2)^2} \not\equiv b \pmod{\ZZ_2}$. Conjugating by $a^{-2}$, we conclude that $b^{a^2} \neq b^{a^{-2}} \pmod{\ZZ_2}$.
\end{aid}

\begin{aid} \label{ba2dnote}
Suppose $[b^{a^2}, d]$ and $[b^{a^{-2}}, d]$ are both in~$\ZZ_2$. This means that $b^{a^2}$ and $b^{a^{-2}}$ both commute with~$d$ (mod~$\ZZ_2$), so the product $b^{a^2}(b^{a^{-2}})^{-1}$ also commutes with~$d$ (mod~$\ZZ_2$). For convenience, call this product~$\gamma$. Then 
	$$ \gamma = b^{a^2}(b^{a^{-2}})^{-1} \equiv b \cdot b^{-1} = e \pmod{G'} ,$$
so $\gamma \in G'$. Furthermore, we have observed that $b^{a^2} \not\equiv b^{a^{-2}} \pmod{\ZZ_2}$, so $\gamma \notin \ZZ_2$. Therefore $\ZZ_p \subseteq \langle \gamma \rangle$. Since $\gamma$ commutes with~$d$ (mod~$\ZZ_2$), we conclude that $d$ centralizes~$\ZZ_p$, and therefore centralizes~$G'$. This is a contradiction.
\end{aid}

\begin{aid} \label{relprime}
If $b$ centralizes~$G'$, then $\ZZ_p$ is in the center of $\langle b,d \rangle$. Therefore, by using \cref{Cents->Homo} as in the proof of \cref{DivBy4} (but replacing $2$ with~$p$), we see that $|\langle \quot b,d \quot \rangle|$ is divisible by~$p^2$. Since $p \neq 2$ and $|\quot G| = 12$, this is a contradiction.
\end{aid}

\begin{aid} \label{binaNotinvert-Noc-aid}
$$ \includegraphics{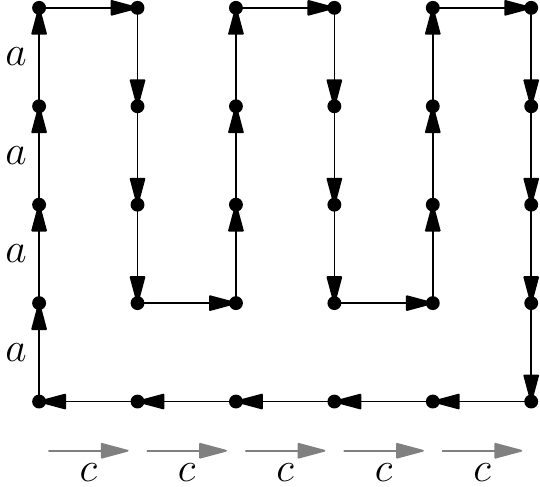} $$
\end{aid}

\begin{aid} \label{Durnberger60}
%
To see that $(\voltage C_0)^{-1} (\voltage C_1)$ is a generator of~$\ZZ_p$, we repeat the calculation in the last paragraph of \cite[p.~60]{Durnberger-semiprod} (using our notation). 
However, it is important to note that $a^k$ centralizes~$\gamma$ (because $a$~inverts $G'$ and~$k$~is even).
We have:
	\begin{align*}
	(\voltage C_0) (\voltage C_1)^{-1}
	&= (b a^{-(k-4)} b  a^{m-2k-2} b a^{-1} b a^2 b^{-2} a^{k-3})^{-1} (a^m)^{-1}
	\\&=  (b a^{-k}) a^4 (ba^{-k})  a^{m-2} (a^{-k} b) a^{-1} (b a^2 b^{-1}) (b^{-1} a^k) a^{-3} a^{-m}
	\\&= \gamma a^4 \gamma a^{m-2} \gamma a^{-1} a^2  \gamma^{-1} a^{-3} a^{-m}
	\\&= \gamma^4
		\qquad   \text{(since $a$ inverts~$\gamma$ and $m$~is even)}
	.\end{align*}
This is a generator of~$\ZZ_p$. Therefore, $(\voltage C_0)^{-1} (\voltage C_1)$  is also a nontrivial element of~$\ZZ_p$, since it is conjugate to the inverse of $(\voltage C_0) (\voltage C_1)^{-1}$.
\end{aid}

\begin{aid} \label{Ga>2}
	$$ \includegraphics{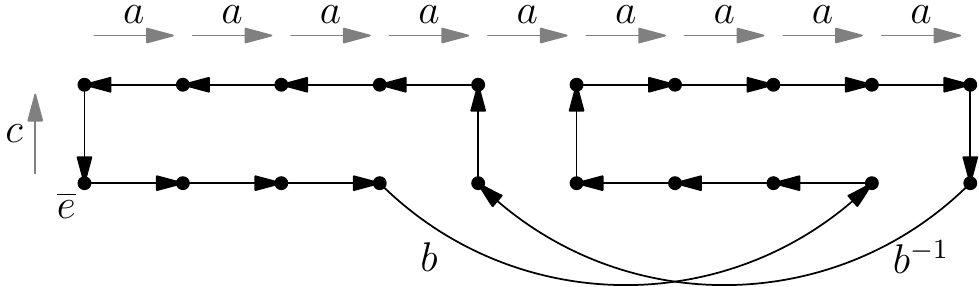} $$
%
%
%
%
%
%
%
%
%
%
%
%
%
\end{aid}

\begin{aid} \label{Ga>2replace}
	$$ \includegraphics{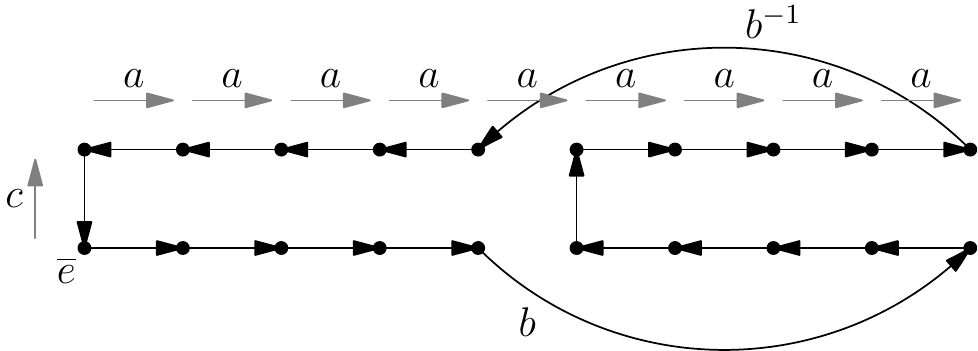} $$
%
%
%
%
%
%
%
%
%
%
%
%
%
\end{aid}

\begin{aid} \label{Ga>2voltage}
We have $[a,b] \in \ZZ_p$ and $[a^{k-1}, G] \subseteq \ZZ_p$ (since $k - 1$ is even). Therefore
	$$ \voltage C 
	=  a^{k-2}  b  a^{-(k-2)} c a^{k-1} c^{-1} b^{-1} c a^{-(k-1)} c^{-1}
	\equiv  b  c  c^{-1} b^{-1} c c^{-1}
	= e
	\pmod{\ZZ_p} $$
and
	$$ \voltage C'
	= a^{k-1} b a^{-(k-1)} c a^{k-1} b^{-1} a^{-(k-1)} c^{-1}
	\equiv b  c b^{-1} c^{-1}
	= [b^{-1}, c^{-1}]
	\equiv [a^k, c]
	\not\equiv e
	\pmod{\ZZ_p} .$$
So $(\voltage C)^{-1}(\voltage C') \notin \ZZ_p$.
\end{aid}

\begin{aid} \label{acisZ2}
Since $\quot b \in \langle \quot a \rangle$, we see from the contrapositive of \cref{Cent->Divides} that $\ZZ_2 \nsubseteq \langle [a,b] \rangle$. Therefore, we must have $\ZZ_2 \subseteq \langle [a,c] \rangle$.
On the other hand, since $\langle \quot a, \quot c \rangle = \quot G$, but $\{a,c\} \neq S$, we see from the contrapositive of \cref{Slessamin} that $\ZZ_p \nsubseteq \langle [a, c] \rangle$. Therefore $[a,c]$ is the nontrivial element of~$\ZZ_2$.
\end{aid}

\begin{aid} \label{acInterchange}
Suppose $c$ neither centralizes~$G'$ nor inverts~$G'$. Since $c$ does not centralize~$G'$, we know that it does not centralize~$\ZZ_p$, so there exists $x \in \{a,b\}$, such that $\ZZ_p \subseteq \langle [c,x] \rangle$. 

If $\quot x \in \langle \quot c \rangle$, then we may apply \cref{binaNotinvert} with $c$ and~$x$ in the roles of~$a$ and~$b$.

Suppose, now, that $\quot x \notin \langle \quot c \rangle$. Since $c$ neither centralizes nor inverts~$G'$, we know $|\quot c| > 2$. So one of the cases of \cref{a>2+bNotinaSect} applies with $c$ and~$x$ in the roles of~$a$ and~$b$.
\end{aid}

\begin{aid} \label{abkham}
The path $(a,b)^k$ is a hamiltonian cycle in $\langle \quot a \rangle$:
	$$ \includegraphics[scale=0.3]{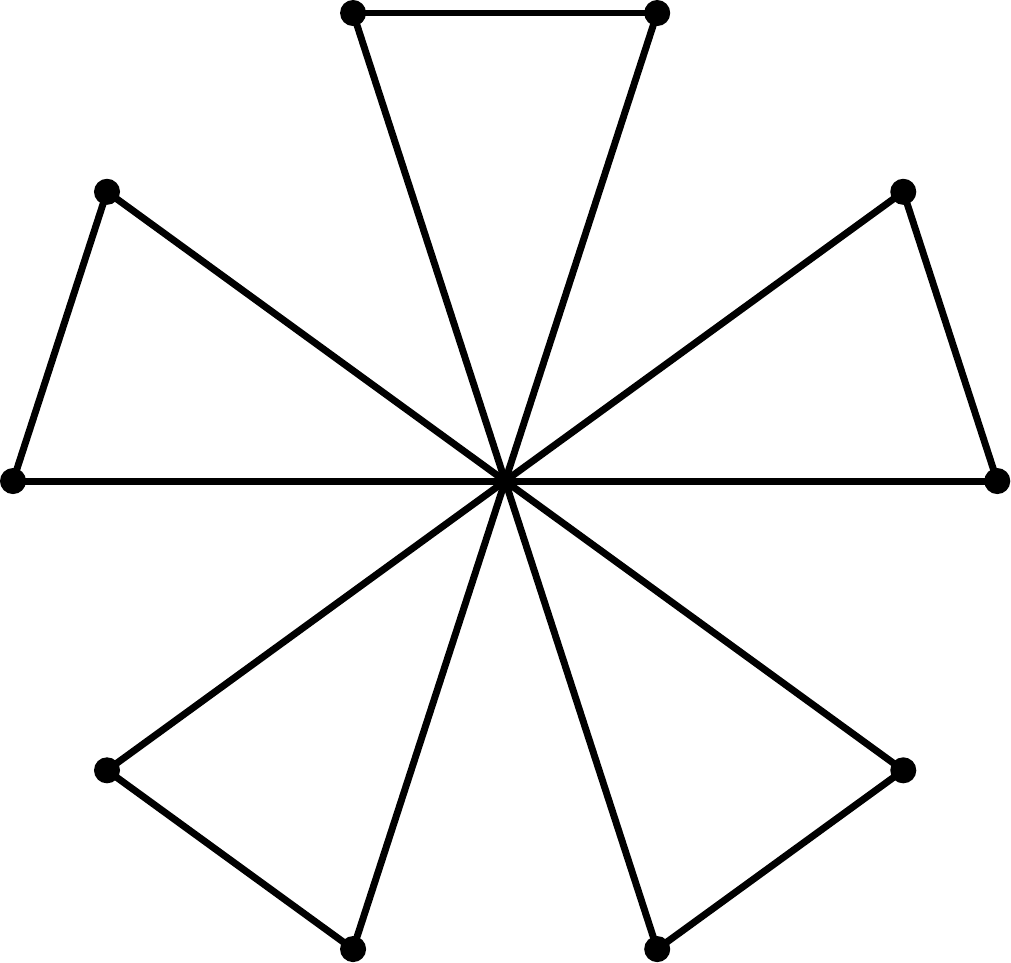} $$
Removing a single edge from this hamiltonian cycle yields the hamiltonian path~$L$.
%
%
%
%
%
\end{aid}

\begin{aid} \label{abk=gammak}
Since $k + 1$ is even (and $a$ inverts~$G'$), we know that $a^{k+1}$ centralizes~$\gamma$.  So
	$$ (ab)^k = \bigl(a \, (a^k \gamma) \bigr)^k = (a^{k+1}\gamma)^k = a^{k(k+1)} \gamma^k 
	= (a^m)^{(k+1)/2} \gamma^k = e^{(k+1)/2} \gamma^k = \gamma^k .$$
\end{aid}

\begin{aid} \label{voltage(abk)}
If $p \mid k$, then
	$$ \voltage L = (ab)^k b^{-1} = \gamma^k \cdot \gamma^{-1} a^{-k} = \gamma^{k-1} a^{-k} .$$
If $p \nmid k$, then
	$$ \voltage L = (ab)^k a^{-1} = \gamma^k a^{-1} .$$
\end{aid}

\begin{aid} \label{k>5voltage}
Let $C_1$ be the hamiltonian cycle obtained from~$C_0$ by applying \cref{StandardAlteration} to $[\quot e](b,a,b)$ and $[\quot a \quot b \quot c] (a^{-1})$ (so $s = b$, $t = a$, $u = c$, and $h = bc$). Then
	$$ \bigl( (\voltage C_0)^{-1}(\voltage C_1) \bigr)^{bc}
	= [t^{-1}, u] \, [s,t^{-1}]^u
	= [a^{-1}, c] [b,a^{-1}]^c .$$
Since $b$ inverts~$G'$, but $c$~centralizes~$G'$, this tells us
	$$ (\voltage C_0)^{-1}(\voltage C_1) = \bigl( [a^{-1}, c] [b,a^{-1}] \bigr)^{-1} = [c,a^{-1}] \, [a^{-1}, b] .$$

Also, since $C_2$ is obtained from~$C_1$ by applying \cref{StandardAlteration} to the path $[\quot a^2](b,a,b)$ and the edge $[\quot a^3 \quot b \quot c] (a^{-1})$ (so $s = b$, $t = a$, $u = c$, and $h = a^2 bc$), we have
	$$ \bigl( (\voltage C_1)^{-1}(\voltage C_2) \bigr)^{a^2bc}
	= [t^{-1}, u] \, [s,t^{-1}]^u
	= [a^{-1}, c] [b, a^{-1}]^c .$$
Since $a$ and $b$ invert~$G'$, but $c$~centralizes~$G'$, this tells us
	$$ (\voltage C_1)^{-1}(\voltage C_2)= \bigl( [a^{-1}, c] [b,a^{-1}] \bigr)^{-1} = [c,a^{-1}] \, [a^{-1},b] .$$

Putting these two calculations together tells us
	\begin{align*}
	 (\voltage C_0)^{-1}(\voltage C_2)
	&= \bigl( (\voltage C_0)^{-1}(\voltage C_1) \bigr) \bigl( (\voltage C_1)^{-1}(\voltage C_2) \bigr)
	\\&= \bigl( [c,a^{-1}] \, [a^{-1}, b] \bigr) \bigl( [c,a^{-1}] \, [a^{-1},b] \bigr)
	\\&= [c,a^{-1}]^2 \, [a^{-1}, b]^2
	\\&= [a^{-1}, b]^2
		&& \text{($[a, c] \in \ZZ_2$)}
	. \end{align*}
\end{aid}

\begin{aid} \label{k>5voltageC0}
Since $c$ centralizes~$G'$, the map $x \mapsto [x,c]$ is a homomorphism to~$\ZZ_2$ whose kernel contains~$G'$. Therefore
	\begin{align*}
	\voltage C_0 
	&= \bigl( (ba)^k a^{-1} \bigr)  c  \bigl( (ba)^k a^{-1} \bigr)^{-1} c^{-1}
	\\&= \bigl[ \bigl( (ba)^k a^{-1} \bigr)^{-1} , c^{-1} \bigr]
	\\&= [ a^{k(k+1) - 1} \gamma^k, c^{-1} ]
	\\&= [ a^{-1}, c^{-1} ]
	&& \text{($k(k+1)$ is even and $[\gamma,c] = e$)}
	\\&= [ a, c ]
	. \end{align*}
\end{aid}

\begin{aid} \label{k=3notZp}
	$$ \includegraphics{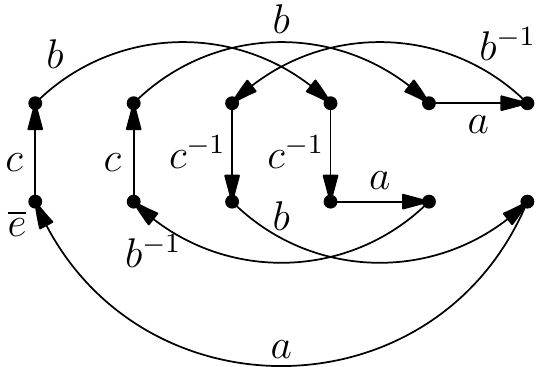} $$
\end{aid}

\begin{aid} \label{voltage(k=3notZp)}
Let $z$ be the nontrivial element of~$\ZZ_2$, so $[a,c] = [b,c] = z$. Then
	\begin{align*}
	\voltage C
		&= c b c^{-1} \cdot a b^{-1}  \cdot c b a b^{-1} c^{-1} \cdot b a
		\\&= {}^c\!b \cdot a b^{-1} \cdot  {}^c\!(b a b^{-1}) \cdot b a
		\\&= bz \cdot a b^{-1} \cdot (b a b^{-1})z \cdot b a
		\\&= b \cdot a b^{-1} \cdot (b a b^{-1}) \cdot b a \cdot z^2
			&& \text{($\ZZ_2$ is in the center of~$G$)}
		\\&= b  a^3
			&& \text{($z^2 = e$ because $z \in \ZZ_2$)}
		\\&= (a^k \gamma)  a^k
			&& \text{($k = 3$)}
		\\&= a^{2k} \gamma^{-1}
			&& \text{($a$ inverts~$G$ and $k = 3$ is odd)}
		\\&= \gamma^{-1}
			&& \text{($a^{2k} = a^m = e$)}
	. \end{align*}
\end{aid}

\begin{aid} \label{p>5aid}
	$$ \includegraphics{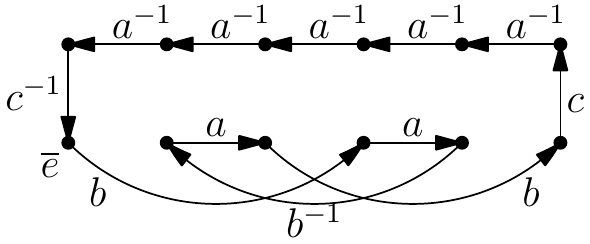} $$
\end{aid}

\begin{aid} \label{p>5voltage}
We have
	\begin{align*}
	\voltage C 
		&= (b a b^{-1} a b) (c a^{-5} c^{-1})
		\\& = (b a b^{-1} a b a) (a^{-1} c a c^{-1})
			&& \text{($a^6 = a^{2k} = e$, so $a^{-5} = a$)}
		\\& = (b a b^{-1} a b a) [a,c^{-1}] 
		. \end{align*}
Since $[a,c] \in \ZZ_2$, we have $[a,c^{-1}]  = [a,c]$. Also, we have
	\begin{align*}
	b a b^{-1} a b a 
	&= (a^3 \gamma) a (\gamma^{-1} a^{-3}) a (a^3 \gamma) a
	\\&= a^{3 + 1 -3 + 1 + 3 + 1} \gamma^{-3}
		&& \text{($a$~inverts~$\gamma$)}
	\\&= \gamma^{-3}
			&& \text{($a^6 = a^{2k} = e$)}
	. \end{align*}
Therefore $\voltage C = \gamma^{-3} [a,c]$.
\end{aid}

\begin{aid} \label{p=3aid}
Let $z = [a,c]$ be the nontrivial element of~$\ZZ_2$. Then every element of $\widehat G = G/\ZZ_p$ can be written uniquely in the form $a^i c^j w^k$ with $0 \le i \le 5$ and $j,k \in \{0,1\}$. The hamiltonian cycle~$C$ visits the vertices of  $\Cay(\widehat G;S)$ in the following order: 
$$\begin{array}{ccccccccccc}
&&e\bya{a}\bya{a^{2}}\byc{a^{2}c} \\
\bya{a^{3}cz}\bya{a^{4}c}\bya{a^{5}cz}\bya{c} \\
\bya{acz}\byc{az}\byai{z}\byai{a^{5}z} \\
\byb{a^{2}z}\bya{a^{3}z}\bya{a^{4}z}\byc{a^{4}cz} \\
\byai{a^{3}c}\byai{a^{2}cz}\byai{ac}\byai{cz} \\
\byai{a^{5}c}\byc{a^{5}}\byai{a^{4}}\byai{a^{3}} \\
\byb{e}
\end{array}$$
\end{aid}

\begin{aid} \label{k=2aid}
	$$ \includegraphics{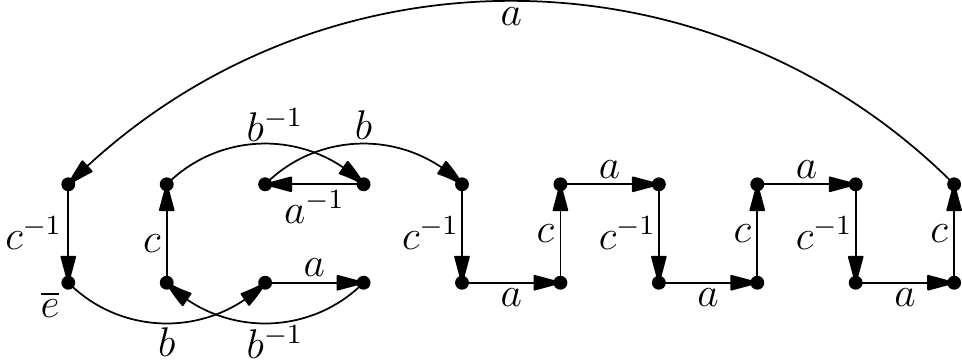} $$
\end{aid}

 \begin{aid} \label{k=2Ga>2voltage}
 \Cref{StandardAlteration} tells us
 	$$ \bigl( (\voltage C_0)^{-1}(\voltage C_1) \bigr)^{b^2}
		= [u,t^{-1}] \, [s, t^{-1}]^u
		= [b, a^{-1}] [b, a^{-1}]^b .$$
Since $b$ centralizes~$G'$, we have 
	$$\bigl( (\voltage C_0)^{-1}(\voltage C_1) \bigr)^{b^2} = (\voltage C_0)^{-1}(\voltage C_1)$$
and 
	$$[b, a^{-1}]^b = [b, a^{-1}] = [b, a]^{-1} = [a,b].$$
Therefore 
	$ (\voltage C_0)^{-1}(\voltage C_1) = [a,b]^2$.
 \end{aid}

\begin{aid} \label{Ga=2}
	$$ \includegraphics{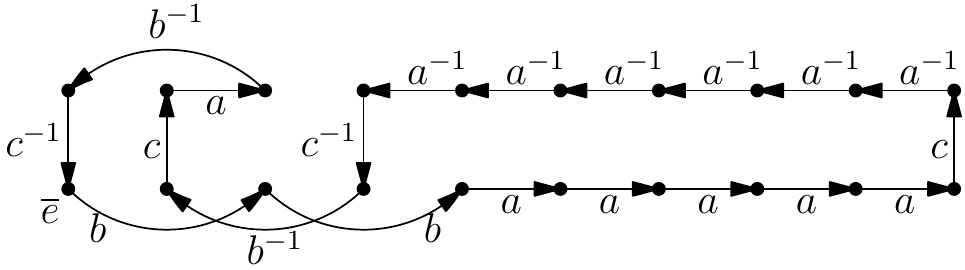} $$
\end{aid}

\begin{aid} \label{Ga=2modp}
Calculating modulo~$\ZZ_p$, we have
	\begin{align*}
	\voltage C 
	&= b^2 (a^{m-5} c a^{-(m-4)} c^{-1}) (b^{-1} c a b^{-1} c^{-1})
	\\&\equiv b^2 (a^{-1}) (b^{-1} a b^{-1} [a,c] )
		&& \begin{pmatrix}
		\text{$m-4$ is even, but $1-k = -1$ is odd,}
		\\ \text{so $c$ commutes with $a^{-(m-4)}$}
		\\ \text{but not with $ab^{-1}$ \quad (mod~$\ZZ_p$)}
		\end{pmatrix}
	\\&\equiv [a,c] 
		&& \bigl(\text{$a$ commutes with~$b$ \  (mod~$\ZZ_p$)}\bigr)
	. \end{align*}
\end{aid}

\begin{aid} \label{Ga=2mod2}
Calculating modulo~$\ZZ_2$, we have
	\begin{align*}
	 \voltage C 
	&= b^2 (a^{m-5} c a^{-(m-4)} c^{-1}) (b^{-1} c a b^{-1} c^{-1})
	\\&= (a^2 \gamma)^2 (a^{m-5} c a^{-(m-4)} c^{-1}) \bigl( (\gamma^{-1} a^{-2}) c a (\gamma^{-1} a^{-2}) c^{-1} \bigr)
		&& \text{($b = a^k \gamma = a^2 \gamma$)}
	\\&\equiv \gamma^2 a^{-1} \gamma^{-1} c a \gamma^{-1} c^{-1})
		&& \hskip-2cm
		\begin{pmatrix} \text{$a^2$ commutes with~$\gamma$ and}
		\\ \text{$a$ commutes with~$c$ $\pmod{\ZZ_2}$}
		\end{pmatrix}
	\\&\equiv \gamma^3 \cdot {}^{c}\!(\gamma^{-1})
		&& \hskip-2cm
		\begin{pmatrix} \text{$a$ inverts~$\gamma$ and}
		\\ \text{commutes with~$c$ $\pmod{\ZZ_2}$}
		\end{pmatrix}
	\\&= \gamma^3 \cdot \gamma^{\pm1}
		&& \hskip-2cm
		\text{($c$ either centralizes or inverts~$G'$)}
	\\&\in \{\gamma^2, \gamma^4\} 
	. \end{align*}
\end{aid}

\begin{aid} \label{BannaiThm}
The main result of \cite{Bannai-HamCycGenPet} states that the generalized Petersen graph $GP(m,k)$ has a hamiltonian cycle if $\gcd(m,k) = 1$ and 
	$$ \text{either \ $m \not\equiv 5 \pmod{6}$ \ or \ $k \notin \{2, (m-1)/2, (m+1)/2, m-2\}$} .$$
(More generally, see \cite[Thm.~3]{AlspachGPHam} for a complete determination of which generalized Petersen graphs  have hamiltonian cycles.)

$\Cay(G;a,b)$ is the generalized Petersen graph $GP(2n, k)$, where $b^a = b^k$ and $1 \le k < 2n$. We must have $\gcd(2n,k) = 1$ (because $b^k$, like~$b$, must generate $\langle b \rangle$), and it is obvious that $2n \not\equiv 5 \pmod{6}$ (since $2n$~is even), so Bannai's theorem provides a hamiltonian cycle in $\Cay(G;a,b)$.

\vbox{ 

}
\end{aid}

\begin{aid} \label{Z2notinbaid}
For the reader's convenience, we translate the calculations in the last paragraph of Case~1 of the proof of \cite[Prop.~6.1]{CurranMorrisMorris-16p} into our notation.

Let $w$ be the nontrivial element of~$\ZZ_2$. Then every element of $\ul G$ can be written uniquely in the form $\ul a^i \ul b^j \ul w^k$ with $i,j,k \in \{0,1\}$. To see that $\bigl( (a,b)^4\#, b^{-1} \bigr)$  is a hamiltonian cycle, note that it visits the vertices of $\Cay(\ul G, a,b)$ in the following order:
	$$ \ul e, \ \ul a, \ \ul a \ul b, \  \ul b \ul w, \  \ul w, \  \ul a \ul w, \  \ul a \ul b \ul w, \  \ul b, \  \ul e .$$
Since 
	$$ (ab)^2 = b(b^{-1}aba) b = b [b,a] b = [a,b]^{-1} b^2 = (-1,-2,0) \cdot (0,2,2) = (-1,0,2) ,$$
the voltage of this hamiltonian cycle is
	$$ (ab)^4 b^{-2} 
	= \bigl( (ab)^2 \bigr)^2 b^{-2} 
	=  (-1,0,2)^2 \cdot b^{-2}
	= (0,0,4) \cdot (0,-2,-2)
	= (0,-2,2)
	. $$
\end{aid}

\begin{aid} \label{a=2+b=3L}
	$$ \includegraphics{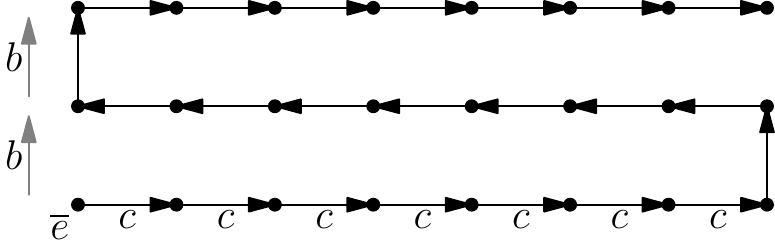} $$
%
%
%
%
%
%
%
%
%
%
%
\end{aid}

\begin{aid} \label{a=2+b=3LaLa}
Calculating modulo~$\ZZ_p$, we have
	\begin{align*}
	\voltage C
	&= (c^{\ell-1} b c^{-(\ell-1)} b c^{\ell-1}) \, a \, (c^{\ell-1} b c^{-(\ell-1)} b c^{\ell-1})^{-1} \, a
	\\&= [ (c^{\ell-1} b c^{-(\ell-1)} b  c^{\ell-1})^{-1}, a ]
	\\&\equiv [c,a]^{-(\ell-1)} [b,a]^{-1} [c,a]^{\ell-1} [b,a]^{-1} [c, a]^{-(\ell-1)}
		&& \text{($G'/\ZZ_p$ is in the center of~$G/\ZZ_p$)}
	\\&\equiv [c,a] \,  [b,a] \, [c,a] \, [b,a] \, [c, a]
		&& \text{($\ell - 1$ and $-1$ are odd)}
	\\&\equiv [c,a] 
	. \end{align*}
This is nontrivial (mod~$\ZZ_p$), so $\voltage C$ projects nontrivially to~$\ZZ_2$.
\end{aid}

\begin{aid} \label{a=2+b=3C0}
	$$ \includegraphics{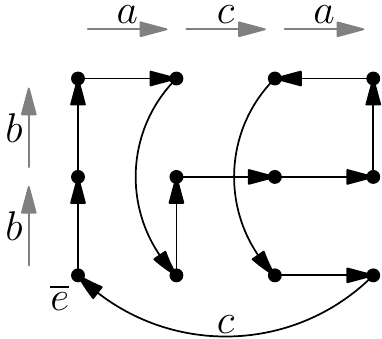} $$
\end{aid}

\begin{aid} \label{a=2+b=3Z2}
Calculating modulo~$\ZZ_p$, we have
	\begin{align*}
	\voltage C_0
	&= b^2 a b^2 c a b a b  a c
	\\&= b^6 a c a c
		&& \text{($[a,b]$ and $[b,c]$ are in~$\ZZ_p$, and $a^2 = e$)}
	\\&= e \cdot [a,c]
		&& \text{($|\quot b| = 3 \Rightarrow b^3 \in \ZZ_2 \pmod{\ZZ_p}$)}
	\\&= [a,c]
	.\end{align*}
This is nontrivial (mod~$\ZZ_p$).	
\end{aid}

\begin{aid} \label{a=c=2L}
	$$ \raise 1cm\hbox{$L$:} \qquad \includegraphics{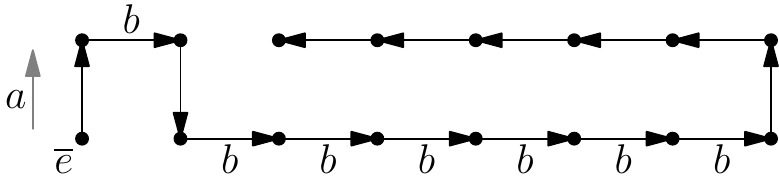} $$
%
%
%
%
%
%
%
%
%
%
\end{aid}

 \begin{aid} \label{a=c=2Cv}
 Calculating modulo~$\ZZ_p$, we have
 	\begin{align*}
	\voltage C
	&= (aba b^{n-2} a b^{-(n-3)}) \, c \, (aba b^{n-2} a b^{-(n-3)})^{-1} \, c^{-1}
	\\&= [(aba b^{n-2} a b^{-(n-3)})^{-1}, c^{-1}]
	\\&\equiv [aba b^{n-2} a b^{-(n-3)}, c]
		&& \hskip-1cm\text{(\cref{Cents->Homo} implies $[x^i,y^j] \equiv [x,y]^{ij}$)}
	\\& \equiv [a, c] \, [b,c]\,  [a,c] \, [b,c]^{n-2} [a,c] \,  [b, c]^{-(n-3)}
		&& \hskip-1cm\text{(\cref{Cents->Homo})}
	\\& = [a,c]^3 [b,c]^2 
		&& \hskip-1cm\text{($G'$ is abelian)}
	\\& \equiv [a,c] 
		&& \hskip-1cm\text{($3$ is odd and $2$ is even)}
	. \end{align*}
 \end{aid}

 \begin{aid} \label{a=c=2C'Z2}
Since $\voltage C \in \ZZ_p$, \cref{StandardAlteration} tells us that
	$$ \voltage C' \equiv [b^{-1}, c] [a, b^{-1}]^c \equiv [b,c] [a,b] \quad \pmod{\ZZ_p} .$$
By assumption, this projects nontrivially to~$\ZZ_2$.
 \end{aid}

\begin{aid} \label{bothinC0}
	$$ \includegraphics{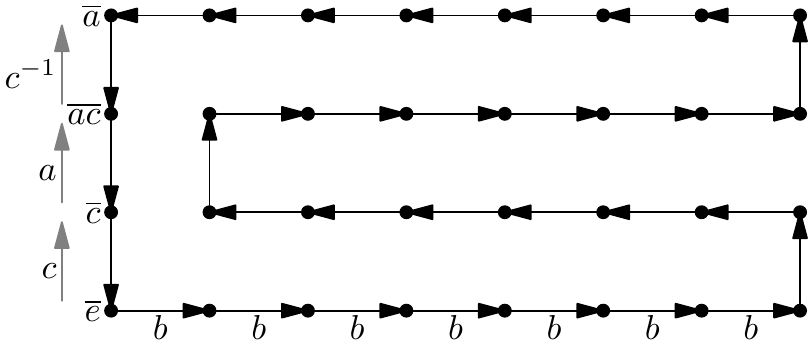} $$
%
%
%
%
%
%
%
%
%
%
\end{aid}

\begin{aid} \label{bothinCstar}
	$$ \includegraphics{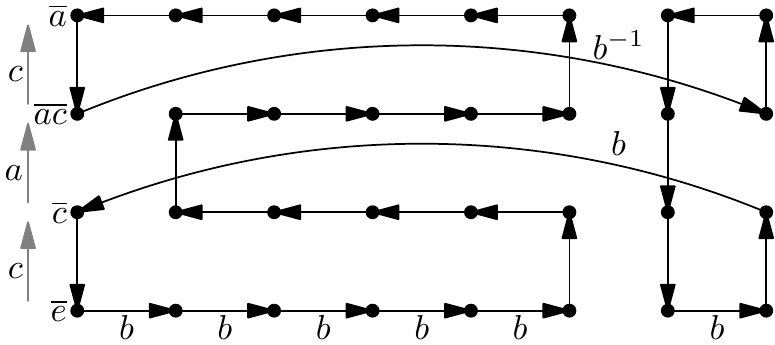} $$
\end{aid}

\begin{aid} \label{acb=a}
Write $a^{cb} = a \gamma$, with $\gamma \in G'$. Then
	$$(b^{-1}c)^2 a (cb)^2 
	=(cb)^{-2} a (cb)^2 
	= a^{(cb)^2}
	= \bigl( a^{cb} \bigr)^{cb}
	= \bigl( a \gamma \bigr)^{cb}
	= a^{cb} \, \gamma^{cb}
	= (a \gamma) \gamma^{-1} 
	= a .$$
\end{aid}

\begin{aid} \label{bNot3+lnot2Lodd}
	$$ \includegraphics{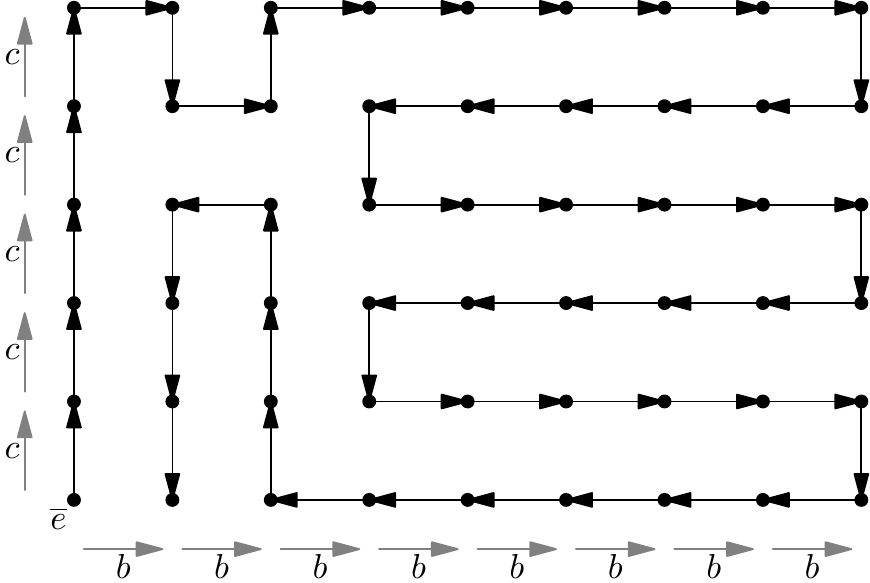} $$
\end{aid}

\begin{aid} \label{bNot3+lnot2Leven}
	$$ \includegraphics{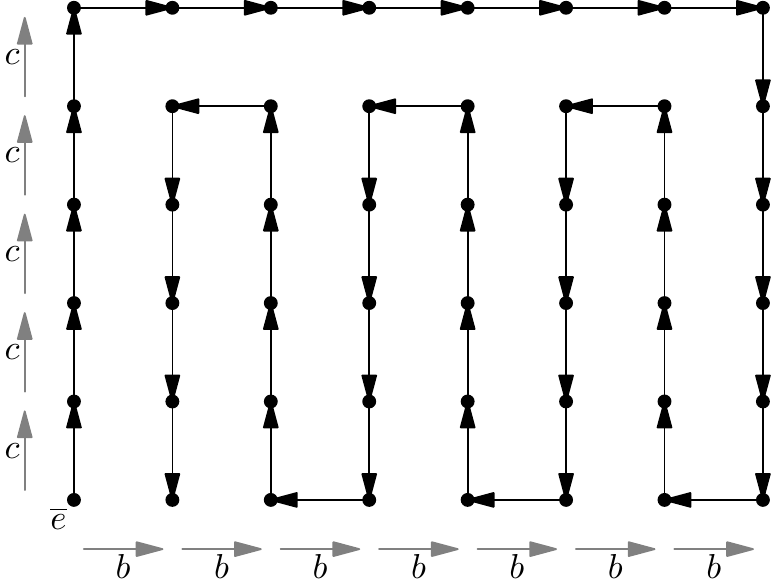} $$
\end{aid}

\begin{aid} \label{atransform}
Let $C'$ be the hamiltonian cycle obtained by applying the $a$-transform. To show that the $a$-transform multiplies the voltage by~$\gamma_a$, we wish to show 
	$$(\voltage C)^{-1} (\voltage C') = [a,b^{-1}]  [c,a] .$$
\Cref{StandardAlteration} tells us
	\begin{align*}
	\bigl( (\voltage C)^{-1} (\voltage C') \bigr)^{ab}
	&= [t^{-1}, u] \, [s, t^{-1}]^u
	= [a^{-1}, b] [c^{-1}, a^{-1}]^b
	= [a,b] [c^{-1}, a]^b
	, \end{align*}
because $a^{-1} = a$.
So
	$$
	(\voltage C)^{-1} (\voltage C')
	= \bigl( [a,b] [c^{-1}, a]^b \bigr)^{(ab)^{-1}}
	= [a,b]^{b^{-1} a} \  [c^{-1}, a]^a
	= [a,b^{-1}]  \, [c,a]
	, $$
because
	$$ [a,b]^{b^{-1} a} 
	= (ab) (a b^{-1} ab)(b^{-1} a)
	= abab^{-1}
	= [a, b^{-1}] $$
and 
	\begin{align*}
	[c^{-1}, a]^a 
	&= [a,c^{-1}] && \text{($a$ inverts $G'$)}
	\\&= [a,c]^{-1} && \text{($c$ centralizes $G'$)}
	\\&= [c,a]
	. \end{align*}
\end{aid}

\begin{aid} \label{btransform}
\Cref{StandardAlteration} tells us that the $b$-transform multiplies the voltage by a conjugate of
	\begin{align*}
	[t^{-1}, u] \, [s,t^{-1}]^u
	&= [b, a] \, [c^{-\epsilon}, b]^a
	= [b,a] \, [b,c^{-\epsilon}]
	= \gamma_b
	. \end{align*}
\end{aid}

\begin{aid} \label{bothZ2}
Let $z$ be the nontrivial element of~$\ZZ_2$, and write $L = (s_i)_{i=1}^r$, so $s_i \in \{b^{\pm1}, c^{\pm1}$ and $r$~is odd. Then, calculating mod~$\ZZ_p$, we have $[s_i,a] \equiv z$ for all~$i$, so
	$$ \voltage C
	= \bigl( \prod\nolimits_{i=1}^r s_i \bigr) \, a \, \bigl( \prod\nolimits_{i=1}^r s_i \bigr)^{-1} \, a
	= \bigl[ \bigl( \prod\nolimits_{i=1}^r s_i \bigr)^{-1} , a \bigr]
	\equiv \prod\nolimits_{i=1}^r [s_i,a]^{-1}
	\equiv \prod\nolimits_{i=1}^r z
	= z^r
	= z .$$
\end{aid}

\begin{aid} \label{neitherC}
Since $b$ and $c$ centralize~$G'$, \cref{Cents->Homo} implies that if we let $\varphi(x) = [x,a]$, then $\varphi$ is a homomorphism from $\langle b,c \rangle$ to~$G'$. Note that, since the image of~$\varphi$ is a subgroup of the abelian group~$G'$, the kernel of~$\varphi$ must contain $\langle b,c \rangle'$.

Write $L = (s_i)_{i=1}^r$. Then 
	$\voltage C
	= \bigl[ \bigl(\prod\nolimits_{i=1}^r s_i)^{-1}, a]$.
Since the sum of the exponents of the occurrences of~$b$ in~$L$ is~$1$, and the sum of the exponents of the occurrences of~$c$ is~$0$, we know $\prod\nolimits_{i=1}^r s_i \equiv b \pmod{\langle b,c \rangle'}$. So the preceding paragraph tells us that 
	$$\bigl[ \bigl(\prod\nolimits_{i=1}^r s_i)^{-1}, a] = [b,a]^{-1} = [a,b] .$$
\end{aid}

\begin{aid} \label{L0}
	$$ \includegraphics{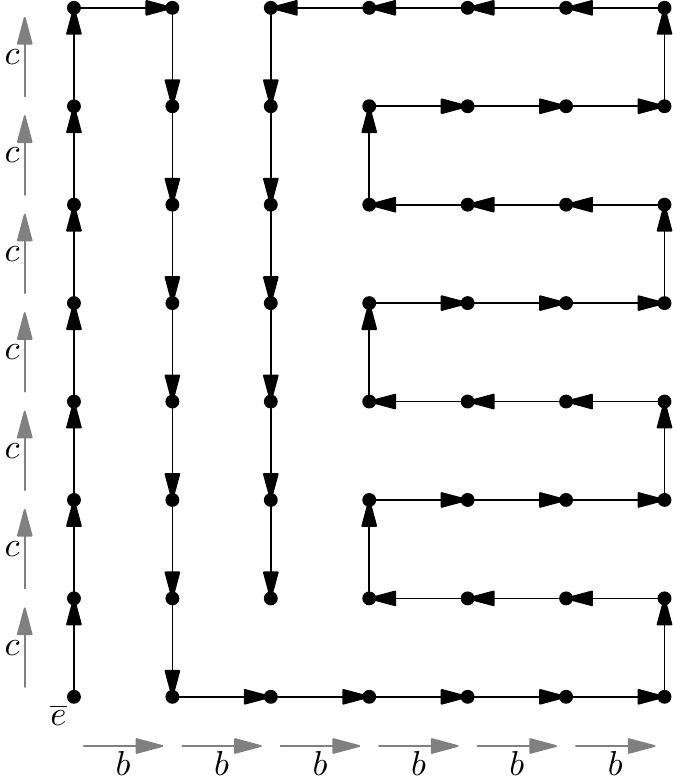} $$
\end{aid}


\begin{aid} \label{DpC}
	$$ \includegraphics{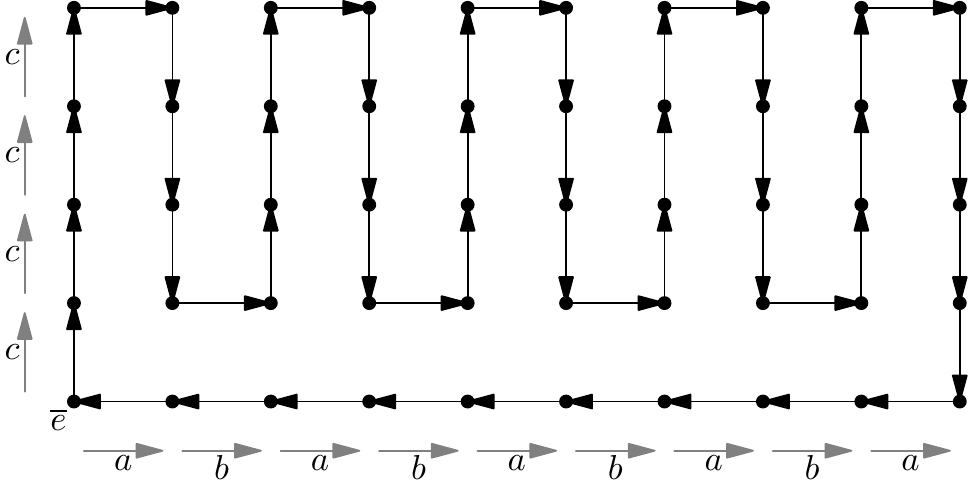} $$
%
%
%
%
%
%
%
%
%
%
\end{aid}

\begin{aid} \label{anotinb}
Suppose $\quot a \in \langle \quot b \rangle$. 

If $b$ does not centralize~$G'$, then \cref{S>3Assump} tells us that $|\quot b| = 2$, so we must have $\quot a = \quot b$, which contradicts the assumption that \cref{s=t} does not apply.

We now know that $b$ centralizes~$G'$. Since $G'$ is abelian (indeed, it is cyclic), this implies that $\langle b, G' \rangle$ is abelian. However, the fact that $\quot a \in \langle \quot b \rangle$ means that $a \in \langle b, G' \rangle$. Since $\langle b, G' \rangle$ is abelian, this implies that $a$ centralizes~$G'$. This contradicts~\pref{aDefn}.
\end{aid}

\begin{aid} \label{NoCommute}
Suppose $s,t \in S$, such that $s$ commutes with~$t$ (and $s \neq t$). There exist $x,y\in S$, such that $\ZZ_2 \subseteq \langle [x,y] \rangle$. Since $\{s,t\} \neq \{x,y\}$, we may assume $s \notin \{x,y\}$ (after interchanging $s$ and~$t$ if necessary). Since $t$ does not centralize~$G'$, there exists $u \in S$, such that $\ZZ_p \subseteq \langle [t,u] \rangle$. Then $\{x,y,t,u \} \subseteq S \sm \{s\}$, so
	$$ G' = \langle \ZZ_2, \ZZ_p \rangle \subseteq \langle [x,y], [t,u] \rangle \subseteq \langle S \sm \{s\} \rangle ,$$
so $\langle S \sm \{s\} \rangle \normal G$. Therefore \cref{Durnberger-commuting} applies
\end{aid}

\begin{aid} \label{cacb}
Note that $L_0 = (c,a,c)$ is a hamiltonian path in 
$\Cay \bigl( \langle \quot a, \quot c \rangle ; \{a,c\} \bigr)$
(because we have $\langle \quot a, \quot c, \rangle \iso \ZZ_2 \times \ZZ_2$). Therefore
	$$ L_1 = (c,a,c,b)^2\# = (L_0, b, L_0^{-1}) $$
is a hamiltonian path in $\Cay \bigl( \langle \quot a, \quot b, \quot c \rangle ; \{a,b,c\} \bigr)$.
So 
	$$ C = \bigl( (c,a,c,b)^2\#, d \bigr)^2 = (L_1, d, L_1^{-1}, d) $$
 is a hamiltonian cycle in $\Cay \bigl( \langle \quot a, \quot b, \quot c, \quot d \rangle ; \{a,b,c,d\} \bigr)$.
\end{aid}

\begin{aid} \label{nocentvoltage1}
\Cref{StandardAlteration} tells us
	$$ \bigl( (\voltage C)^{-1}(\voltage C') \bigr)^{bc}
	= [t^{-1}, u] \, [s,t^{-1}]^u
	= [a, b] \, [c, a]^b
	= [a,b] \, [a,c]
	= \gamma
	. $$
Since $b$ and~$c$ both invert~$G'$, we know that $bc$ centralizes~$G'$, so $(\voltage C)^{-1}(\voltage C') = \gamma$.
\end{aid}

\begin{aid} \label{nocentvoltage2}
This is exactly the same calculation as in \cref{nocentvoltage1}, except that $(\voltage C')^{-1}(\voltage C'')$ is conjugated by $cd$, instead of~$bc$. Since $cd$, like~$bc$, centralizes~$G'$, this change does not affect the result of the calculation at all, so, as before, the voltage is multiplied by~$\gamma$.
\end{aid}

\begin{aid} \label{S=4NotCentvoltage}
Note that 
	\begin{itemize}
	\item if $y$ centralizes~$G'$, then $[xy, z] = [x,z] \, [y,z]$ and $[y^{-1},z] = [y,z]^{-1}$, but
	\item if $y$ inverts~$G'$, then $[xy, z] = [x,z]^{-1} \, [y,z]$ and $[y^{-1}, z] = [y,z]$.
	\end{itemize}
Therefore
	\begin{align*}
	\voltage C
	&= \bigl( (cacb)^2 b^{-1} d \bigr)^2
	\\&= [(cacb)^2 b^{-1} ,d ]
	\\&= [(cacb)^2 , d ]^{-1} \,  [b ,d ]
		&& \text{($b$ inverts~$G'$)}
	\\&= \Bigl( [ca, d]^2 [cb , d ]^2 \Bigr)^{-1} \,  [b ,d ]
		&& \text{($cb$ and $cb$ centralize~$G'$)}
	\\&= \Bigl( \bigl( [c,d]^{-1} [a, d] \bigr)^2 \bigl( [c,d]^{-1} [b , d ] \bigr)^2 \Bigr)^{-1} \,  [b ,d ]
		&& \text{($a$ and~$b$ invert~$G'$)}
	\\&= [c,d]^4 [a,d]^{-2} [b,d]^{-1}
	\\&= [c,d]^4 [d,a]^2 [d,b]
	. \end{align*}
\end{aid}

\begin{aid} \label{a<>c}
We have $[d,a]^6 [d,b] \in \ZZ_2$. Assuming the same is true when we interchange $a$ and~$c$, we also have $[d,c]^6 [d,b] \in \ZZ_2$. So 
	$$[d,c]^6 \equiv [d,b]^{-1} \equiv [d,a]^6 \pmod{\ZZ_2} .$$
Since $p \neq 3$, this implies $[d,c] \equiv[d,a] \pmod{\ZZ_2}$.
\end{aid}


\begin{aid} \label{S=4asNotZpAbelga}
Suppose $[s,a] \notin \ZZ_2$, so $g = s$. By the definition of~$s$, we also know $[s,a] \notin \ZZ_p$. Therefore $[s,a]$ generates~$G'$.

Assume now that $[s,a] \in \ZZ_2$, so $g = sb^2$. Calculating modulo~$\ZZ_p$, we have
	\begin{align*}
	[sb^2, a] 
	&= [s,a] [b,a]^2
		&& \text{(by \cref{Cents->Homo}, since $G'$ is central in $G/\ZZ_p$)}
	\\&\equiv [s,a] 
	\\&\notin \ZZ_p
		&& \text{(definition of~$s$)}
	. \end{align*}
Calculating modulo~$\ZZ_2$, we have
	\begin{align*}
	[sb^2, a] 
	&\equiv [b^2, a]
		&& \text{($[s,a] \in \ZZ_2$)}
	\\&\not\equiv e
	. \end{align*}
so $[sb^2, a]$ generates~$G'$.
\end{aid}

 \begin{aid} \label{S=4asNotZpNonabelC0}
 Let $L$ be the following hamiltonian path in $\Cay \bigl( \langle \quot w, \quot x \rangle ; w,x \bigr)$:
$$ \raise3cm\hbox{$\displaystyle\frac{ |\langle \quot w, \quot x \rangle |}{| \langle  \quot w \rangle|}$ odd: } \includegraphics[scale=0.9]{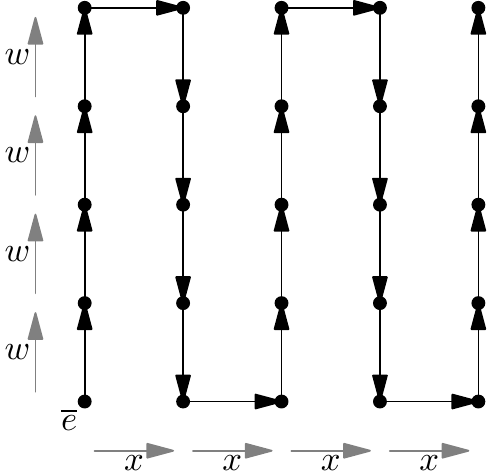} 
%
%
%
%
%
%
%
%
%
%
 \qquad 
 \raise3cm\hbox{$\displaystyle\frac{ |\langle \quot w, \quot x \rangle |}{| \langle  \quot w \rangle|}$ even: } \includegraphics[scale=0.9]{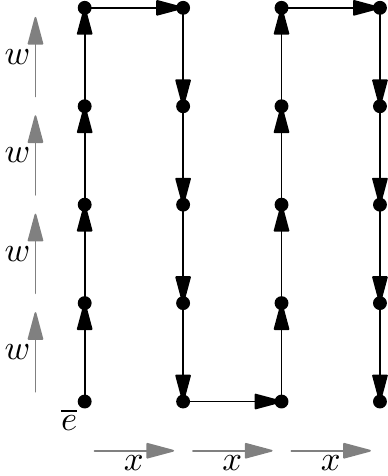} 
$$
%
%
%
%
%
%
%
%
%
%
Note that  $L$ contains the oriented path $[\quot w^{|\quot w | - 2} ](w, x, w^{-1})$. We can also write this path as $[\quot {h w^{-1} y^{-1}}](w, x, w^{-1})$, for $h = w^{|\quot w | - 1} y$.

Then $C' = (L, y, L^{-1}, y^{-1})$ is a cycle through the vertices of $\Cay \bigl( \langle \quot w, \quot x, \quot y \rangle ; w,x,y \bigr)$ that are in $\langle \quot w, \quot x \rangle \cup \quot y \langle \quot w, \quot x \rangle$. Therefore, if $C''$ is any hamiltonian cycle in $\Cay \bigl( \langle \quot w, \quot x \rangle ; w,x \bigr)$ that shares an edge with~$C'$, then an appropriate connected sum 
	$$ C_0 = C' \connsum_{t_1}^{s_1} -g_1C'' \connsum_{t_2}^{s_2} g_2C'' \connsum_{t_3}^{s_3} \cdots \connsum_{t_d}^{s_d} \pm g_dC'' $$
of $C'$ with translates of~$C''$ is a hamiltonian cycle in $\Cay(\quot G; S_0)$. 

Note that $C'$ contains both $[\quot {h  w^{-1}   y^{-1}}](w, x, w^{-1})$ and $[\quot {h x}](x^{-1})$. Therefore, if the edge used to form the first connected sum $C' \connsum_{t_1}^{s_1} -g_1C''$ is not in either of these paths, then $C_0$ also contains both of these paths. (For example, we could use the edge $[\quot y \quot w](w^{-1})$ to form the connected sum.) 
 \end{aid}

 \begin{aid} \label{not16}
 It suffices to show that $C_0$ has an edge labeled $s^{\pm1}$ that is not in either of the given paths, for we may assume that this oriented edge is of the form $[\quot {s^{\epsilon}}](s^{-\epsilon})$ (by replacing $C_0$ with a translate). 

 If $s \neq b$, then \cref{minmodab} implies there is at least one edge labeled $s^{\pm1}$. If $s \notin \{w^{\pm1}, x^{\pm1}\}$, then this edge is obviously not in either of the given paths (since all of the edges in those paths are labeled $w^{\pm1}, x^{\pm1}$). 

Therefore, we may assume $s \in \{ w^{\pm1}, x^{\pm1}, b \}$. To deal with this situation, we assume $C_0$ has been constructed as in \cref{S=4asNotZpNonabelC0} (from the path~$L$ and cycle~$C'$ that are specified there).

Suppose $s = w^{\pm1}$. The edges $[\quot{wy}](w^{-1})$ and $[\quot{xy}](w)$ are in~$C'$, but are not in either of the given paths. One of these edges may have been removed in forming the connected sum that defines~$C_0$, but the other will remain. So $C_0$ has at least one edge labeled~$w^{\pm1}$.

Suppose $s = x^{\pm1}$. 
	\begin{itemize}
	\item If $|\langle \quot w, \quot x \rangle/ \langle \quot w \rangle| > 2$, then $C'$ contains the edges $[\quot x](x)$ and $[\quot{x^2y}](x^{-1})$. At least one of these must be in~$C_0$ (and neither of these edges is in the given paths).
	\item If $|\quot G : \langle \quot w, \quot x \rangle| > 2$, then $C_0 \neq C'$ (in other words, $d > 1$ in the definition of~$C_0$). We may assume $C'$ has at least one edge labeled~$x^{\pm1}$, and that only edges labeled~$w^{\pm1}$ are used in forming the connected sums. Therefore, this edge labeled~$x^{\pm1}$ is in~$C_0$ (and it is not in the given paths).
	\item We may now assume $|\quot G : \langle \quot w \rangle| = 4$. Since \pref{xnots16} tells us that $|\quot G| > 16$ (and we know $|\quot a| = 2$), we must have $|\quot w| > 2$. Therefore, we have the following hamiltonian path~$\widetilde L$ in $\Cay(\langle \quot w, \quot x \rangle; w,x )$:
		$$ \includegraphics{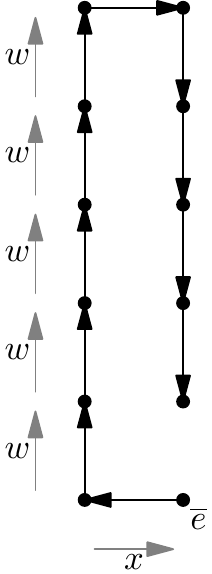} $$
%
%
%
%
%
%
%
%
%
%
	Use this path in the place of~$L$ to construct cycles~$\widetilde C'$ and~$\widetilde C_0$ analogous to $C'$ and~$C_0$ (and let $h = w^{|\quot w | - 1} x^{-1} y$). Note that $\widetilde L$ has two edges labelled $x^{\pm1}$, so $\widetilde C'$ has four edges labelled $x^{\pm1}$. Two of these are in the given paths, and one may be deleted in the construction of the connected sum, but at least one of these edges labelled $x^{\pm1}$ remains in~$\widetilde C_0$ and is not in either of the given paths.
	\end{itemize}

We may now assume $s = b \notin \{w,x\}$. 
From \cref{minmodab}, we know that $w \notin \langle \quot b \rangle$. Therefore, it is easy to construct a hamiltonian path $P = (t_i)_{i=1}^r$ in $\Cay \bigl( \langle \quot w, \quot b \rangle : b,w \bigr)$ that has at least one edge labeled $b^{\pm1}$, and such that $t_r = w$. Now, in place of the hamiltonian path
	$$ L = (w^r, x, w^{-r},  x, w^r, x, w^{-r},  x, \ldots) $$
that was used in \cref{S=4asNotZpNonabelC0}, use the hamiltonian path
	$$ \widetilde L = ( P, x, P^{-1},  x, P, x, P^{-1},  x,  \ldots) $$
  to construct cycles~$\widetilde C'$ and~$\widetilde C_0$ analogous to $C'$ and~$C_0$ (and let $h = y \prod_{i=1}^r t_i $).
  Then $C_0$ contains at least two edges labeled $b^{\pm1}$ (one from the first occurrence of~$P$ in~$L$, and another from the first occurrence of~$P^{-1}$). Neither of these is in the given paths (because $b \notin \{w,x\}$) and at least one of them remains in~$\widetilde C_0$.
 \end{aid}

\begin{aid} \label{anotS0G'}
Suppose $a \in \langle S_0 , G'\rangle$. Then
	$$ G 
	= \langle S \rangle
	= \langle S_0, a \rangle 
	\subseteq \langle S_0, G' \rangle 
	= \langle S_0, \ZZ_p, \ZZ_2 \rangle 
	\subseteq \langle S_0, \langle S_0 \rangle', \ZZ_2 \rangle 
	= \langle S_0, \ZZ_2 \rangle 
	,$$
so \cref{Z2inFrattini} implies $G = \langle S_0 \rangle$. This contradicts the irredundance of~$S$.
\end{aid}

 \begin{aid} \label{Ascendtnotb}
 If $b \notin \{x^{\pm1}, t^{\pm1}\}$, then \cref{minmodab} immediately implies that \pref{ascending} is satisfied. Then, since $b \notin \{ t^{\pm1} \} = \{ y^{\pm1} \}$, we may assume $b \in \{x^{\pm1} \}$. We wish to show $\langle \quot w \rangle \nsubseteq \langle \quot w, \quot b \rangle$.  In other words, we wish to show $\quot b \notin \langle \quot w \rangle$. 
 
 Suppose $\quot b \in \langle \quot w \rangle$.  Then, since $b$ does not centralize~$G'$, we know that $w$ does not centralize~$G'$, so \cref{S>3Assump} tells us that $|\quot w| = 2$. Since $\quot b \in \langle \quot w \rangle$, this implies that $\quot b = \quot w$, so \cref{s=t} applies.
 \end{aid}

\begin{aid} \label{S>3somenontrivL}
Since $\ZZ_p \subseteq \langle [a,b] \rangle$ and $b$ centralizes~$G'$, we know $|\quot b|$ is divisible by~$p$, so $|\quot b| > 2$. 

If $|\quot b|$ is even, we may let $L$ be a hamiltonian path of the following shape:
	$$ \includegraphics{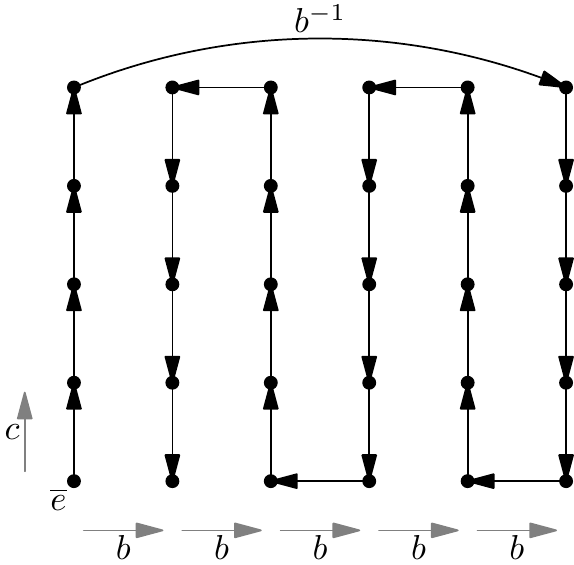} $$
%
%
%
%
%
%
%
%
%
%
%

Henceforth, we assume that $|\quot b|$ is odd. From \cref{Cent->Divides}, we see that $|\quot G : \langle \quot a, \quot b, \quot c \rangle|$ is even (so, in particular, $\langle \quot a, \quot b, \quot c \rangle \neq \quot G$). And \cref{Cent->Divides} also implies that $|\langle \quot a, \quot b, \quot c \rangle : \langle \quot a, \quot b \rangle|$ is even. Let $(t_i)_{i=1}^{\ell-1}$ be a hamiltonian path in $\Cay \bigl( \quot G / \langle \quot a, \quot b \rangle; S \sm \{a,b\} \bigr)$, such that $t_1 = c$ and $t_k = c^{\pm1}$ for some $k \notin \{1,2\}$. For example, $(t_i)_{i=1}^{\ell-1}$ could be of the following form:
	$$ \includegraphics{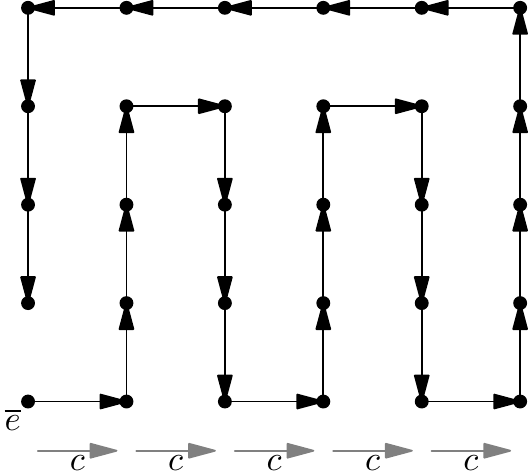} $$
%
%
%
%
%
%
%
%
%
%

If $|\quot b| > 3$, we may let $L$ be a hamiltonian path of the following shape:
	$$ \includegraphics{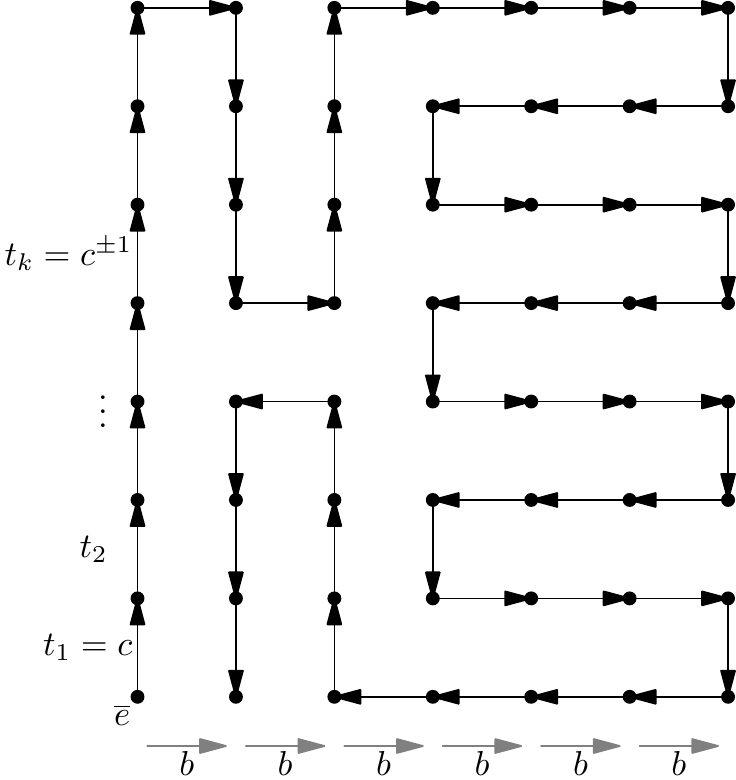} $$
%
%
%
%
%
%
%
%
%
%

Now assume $|\quot b| = 3$. The hamiltonian path~$(t_i)_{i=1}^{\ell-1}$ that is pictured above can be extended to a hamiltonian cycle~$(t_i)_{i=1}^{\ell}$, such that $\prod_{i=1}^{\ell} t_i \in G'$.
Now, we may let $L$ be the following hamiltonian path:
	$$ \includegraphics{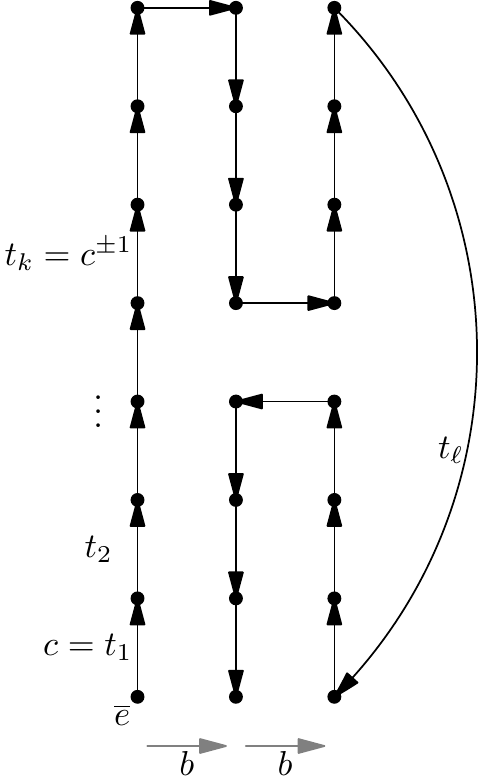} $$
%
%
%
%
%
%
%
%
%
%
%
\end{aid}

\begin{aid} \label{S>3somenontriva}
\Cref{StandardAlteration} tells us that
	$$ \bigl( (\voltage C)^{-1} (\voltage C') \bigr)^{ab}
	= [t^{-1}, u] \, [s, t^{-1}]^u
	= [a, b] \, [c^{-1}, a]^b .$$
Since $a$ inverts $G'$ and $b$~centralizes~$G'$, this implies
	$$ (\voltage C)^{-1} (\voltage C') = [b,a] \, [a,c^{-1}] = \gamma_a .$$
\end{aid}

\begin{aid} \label{S>3somenontrivb}
\Cref{StandardAlteration} tells us that the $b$-transform multiplies the voltage by a conjugate of
	\begin{align*}
	 [t^{-1}, u] \, [s, t^{-1}]^u
	&= [b^{-\delta}, a] \, [c^{-\epsilon}, b^{-\delta}]^a .
	\\&= [b^{-\delta}, a] \, [b^{-\delta}. c^{-\epsilon} ]
	\\&= \bigl( [b, a] \, [b. c^{-\epsilon} ] \bigr)^{-\delta}
	\\&= \bigl( [a,b,] \, [c^{-\epsilon}, b ] \bigr)^{\delta}
	\\&= (\gamma_b)^\delta
	. \end{align*}
Since $a$ inverts $G'$, and $\delta \in \{\pm1\}$, this is conjugate to~$\gamma_b$.
\end{aid}

%
%
%
%



\begin{aid} \label{4-bCentsG'C0}
Let $\widehat G = \quot G/ \langle \quot a \rangle$. We may let $C_0 = (a, L, a, L^{-1} )$, where $L = (t_i)_{i=1}^r$ is a hamiltonian path from~$\widehat e$ to~$\widehat b$ in $\Cay \bigl( \langle \widehat{S_0} \rangle; b,d \bigr)$, such that $t_1 = d$ (and $L$ contains at least one edge labeled $b^{\pm1}$).

Here is one way to construct such a hamiltonian path. 
We know that $\ZZ_2 \nsubseteq \langle a,b,c \rangle'$ (since $[s,t] \in \ZZ_p$ for all $s \in \{a,b\}$ and $t \in S$). On the other hand, $\ZZ_2 \subseteq \langle [c,d] \rangle$ (from the choice of $c$ and~$d$). So \cref{Cent->Divides} tells us that $|\langle \widehat b, \widehat d \rangle: \langle \widehat b \rangle|$ is even. (In fact, $|\langle \widehat b, \widehat c, \widehat d \rangle: \langle \widehat b, \widehat c \rangle|$ is even.) Therefore, we may let $L$ be a hamiltonian path of the following shape:
	$$ \includegraphics{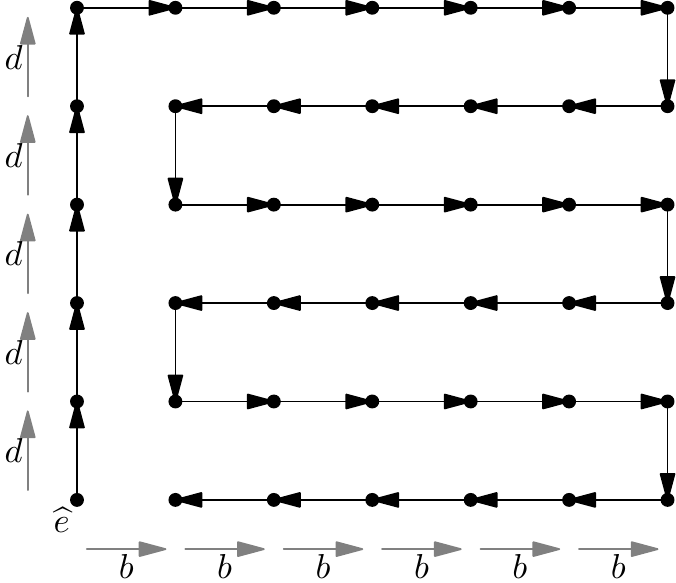} $$
%
%
%
%
%
%
%
%
%
%
\end{aid}

\begin{aid} \label{4-bCentsG'gamma}
\Cref{StandardAlteration} tells us that the voltage is multiplied by a conjugate of
	$$ [t^{-1}, u] \, [s,t^{-1}]^u
	= [a, b] \, [d^{-1}, a]^b
	= [a,b] \, [d^{-1},a] 
	= \gamma.$$
\end{aid}

%
%
%
%

 \begin{aid} \label{abinZpnotZ2C0}
 Let $L = (t_i)_{i=1}^r$ be a hamiltonian path in $\Cay(\quot H / \langle \quot a,b \rangle; S \sm \{a,b,c\})$, such that $t_1 = d$. Then the desired hamiltonian cycle is $( b, L, a, L^{-1}, b , L, a, L^{-1})$.
 \end{aid}

 \begin{aid} \label{abinZpnotZ2voltage}
 \Cref{StandardAlteration} tells us that the voltage is multiplied by a conjugate of
 	$$ [t^{-1}, u] \, [s,t^{-1}]^u
	= [b,a] \, [d^{-1}, b]^a
	= [b,a] \, [b,d^{-1}] ,$$
since $a$~inverts~$G'$.
 \end{aid}

\vfill\vfill
\end{appendix}
}

\end{document}